\def\g{\mathfrak{g}}
\def\pa{\partial}
\def\Z{\mathbb{Z}}
\def\N{\mathbb{N}}
\def\C{\mathbb{C}}
\numberwithin{equation}{section}
\newtheorem{theo}{Theorem}[section]
\newtheorem{defi}[theo]{Definition}
\newtheorem{coro}[theo]{Corollary}
\newtheorem{lemm}[theo]{Lemma}
\newtheorem{prop}[theo]{Proposition}
\newtheorem{exa}[theo]{Example}
\newtheorem{remark}[theo]{Remark}
\begin{document}

\title[Cocommutative vertex bialgebras]{Cocommutative vertex bialgebras}

\author{Jianzhi Han, Haisheng Li, Yukun Xiao}

\address{School of Mathematical Sciences, Tongji University, Shanghai, 200092, China.}
\email{jzhan@tongji.edu.cn}

\address{Department of Mathematical Sciences, Rutgers University, Camden, NJ 08102, USA.}
\email{hli@camden.rutgers.edu}

\address{School of Mathematical Sciences, Tongji University, Shanghai, 200092, China.}
\email{ykxiao@tongji.edu.cn}

\subjclass[2010]{17B69, 17B68, 16T10}

\keywords{ Bialgebra, vertex bialgebra,  vertex Lie algebra.}


\begin{abstract}
In this paper, the structure of cocommutative vertex bialgebras is investigated. For a general vertex bialgebra  $V$, 
it is proved that the set $G(V)$ of group-like elements is naturally an abelian semigroup,
whereas the set $P(V)$ of primitive elements is a vertex Lie algebra. 
For $g\in G(V)$, denote by $V_g$ the connected component containing $g$.
Among the main results, it is proved  that if $V$ is a cocommutative vertex bialgebra, then $V=\oplus_{g\in G(V)}V_g$, where
$V_{\bf 1}$ is a vertex subbialgebra which is isomorphic to the vertex bialgebra ${\mathcal{V}}_{P(V)}$ 
associated to the vertex Lie algebra $P(V)$, and $V_g$ is a $V_{\bf 1}$-module for $g\in G(V)$.
 In particular, this shows that every cocommutative connected vertex bialgebra
$V$ is isomorphic to ${\mathcal{V}}_{P(V)}$ and hence establishes the equivalence between the category of
cocommutative connected vertex bialgebras  and the category of vertex Lie algebras.
Furthermore, under the condition that $G(V)$ is a group and lies in the center of $V$, 
it is proved that  $V={\mathcal{V}}_{P(V)}\otimes \C[G(V)]$
as a coalgebra where the vertex algebra structure is explicitly determined.
\end{abstract}

\maketitle

\section{Introduction}
Vertex algebras are analogous and closely related to classical algebraic systems such as Lie algebras 
and associative algebras on one hand,  and they are also highly nonclassical in nature on the other hand.
Especially, vertex algebras can be associated to (infinite-dimensional) Lie algebras of a certain type,
 including the Virasoro algebra and the affine Lie algebras.
Vertex algebra analogues of Lie algebras, called vertex Lie algebras, were studied by Primc (see \cite{P}),
and the same structures, called Lie conformal algebras, were  studied independently by Kac (see \cite{Kac}).
Associated to each vertex Lie algebra $C$, one has a Lie algebra ${\mathcal{L}}_C$. 
(Another notion of vertex Lie algebra was studied in \cite{DLM}, where
vertex Lie algebras are defined as Lie algebras of a certain type.)
Just as an associative algebra is naturally a Lie algebra, a vertex algebra is naturally a vertex Lie algebra.
On the other hand, for any vertex Lie algebra $C$,  one has a vertex algebra ${\mathcal{V}}_{C}$,
which is constructed on a generalized Verma module for ${\mathcal{L}}_C$ and contains $C$ as a vertex Lie subalgebra.
A result due to Primc is that for any vertex algebra $V$ and for any vertex Lie algebra homomorphism $\psi: C\rightarrow V$,
there exists a (unique) vertex algebra homomorphism $\Psi: {\mathcal{V}}_{C}\rightarrow V$ such that $\Psi|_C=\psi$. 
This illustrates the relationship between vertex algebras and Lie algebras.

It has been well understood that vertex algebras are analogues and generalizations of 
commutative associative algebras with identity.
Noncommutative associative algebra analogues of vertex algebras were studied by Baklov and Kac 
under the name ``field algebra'' (see \cite{BK}) and independently by the second named author of 
this current paper under the name ``$G_1$-axiomatic vertex algebra'' in \cite{L0}.
These were further studied under the name ``nonlocal vertex algebras'' in \cite{L1}. 

 Theoretically, vertex algebra analogues of coalgebras, bialgebras, and Hopf algebras are also important objects to study.
 In \cite{Hubb1, Hubb2}, Hubbard  introduced and studied a notion of vertex operator coalgebra, 
 which is dual to that of vertex operator algebra.
Then the second named author of the current paper initiated a study on vertex algebra analogues of bialgebras in \cite{L}, 
where a notion of (nonlocal) vertex bialgebra was introduced.
By definition, a (nonlocal) vertex bialgebra is simply a (nonlocal) vertex algebra equipped with a classical coalgebra structure
such that the comultiplication map and the counit map are homomorphisms of (nonlocal) vertex algebras.
(Note that the vertex coalgebra maps of a vertex operator coalgebra in the sense of Hubbard can not be interpreted 
as vertex algebra morphisms.)
 Furthermore, a notion of (nonlocal) vertex module-algebra for a (nonlocal) vertex bialgebra was introduced
 and a smash product construction of (nonlocal) vertex algebras was obtained therein. 
Some initial applications were also given. 

The study of vertex bialgebras was continued in recent work \cite{JKLT}. 
For a vertex bialgebra $H$,  a notion of right $H$-comodule nonlocal vertex algebra was introduced
and a general construction of quantum vertex algebras was obtained 
by using a right comodule nonlocal vertex algebra structure and 
a compatible (left) module vertex algebra structure on a nonlocal vertex algebra. 
As an application, a family of deformations of lattice vertex algebras was obtained. 
Currently, this is used in a sequel to establish a natural connection between twisted quantum affine algebras 
and quantum vertex algebras.

We mention that Hopf action on vertex operator algebras 
was studied by Dong and Wang in \cite{DW}, and studied further by Wang (see \cite{W1,W2}),
 from a different direction with different perspectives. 

This current work is part of a program to fully investigate the structure of vertex bialgebras.
Among the main results of this paper,  we prove that if $V$ is a cocommutative vertex bialgebra, 
then $V=\oplus_{g\in G(V)}V_g$, where $V_{\bf 1}$ is a vertex subbialgebra 
which is isomorphic to the vertex bialgebra ${\mathcal{V}}_{P(V)}$ 
associated to the vertex Lie algebra $P(V)$, and $V_g$ is a $V_{\bf 1}$-module for $g\in G(V)$. 
In particular, this shows that every cocommutative connected vertex bialgebra
$V$ is isomorphic to ${\mathcal{V}}_{P(V)}$ and establishes the equivalence between the category of
cocommutative connected vertex bialgebras  and the category of vertex Lie algebras.
Furthermore, under the condition that $G(V)$ is a group and lies in the center of $V$, 
we prove that  $V={\mathcal{V}}_{P(V)}\otimes \C[G(V)]$ (canonically) 
as a coalgebra, where the vertex algebra structure is explicitly determined.

Now, we describe the main contents of this paper with more technical details. 
Let us start with some well known facts in the classical Hopf algebra theory.
First of all,  as we work on $\C$, any cocommutative coalgebra is pointed, so that an irreducible coalgebra 
is the same as a connected coalgebra. Among the important examples of Hopf algebras are the group algebras
(of groups) and the universal enveloping algebras of  Lie algebras.
The closely related notions of group-like element and primitive element play a crucial role.
A basic fact is that for every bialgebra $B$, the set $G(B)$ of group-like elements
is a semigroup with identity, whereas the set $P(B)$ of primitive elements is a Lie algebra.
Another fact is that a cocommutative bialgebra $B$ is a Hopf algebra if and only if $G(B)$ is a group.
A classical result is that  every cocommutative (pointed)  Hopf algebra $H$ is canonically isomorphic to 
the smash product $U(P(H))\sharp \mathbb{C}[G(H)]$ with respect to a natural action of $G(H)$ on $U(P(H))$ (see \cite{M}). 
This in particular shows that any cocommutative connected  bialgebra $H$ is isomorphic to  
the universal enveloping algebra $U(P(H))$.
In terms of categories, this gives the equivalence between the category of cocommutative connected bialgebras 
and that of Lie  algebras.

In this paper, to begin we show that for every vertex bialgebra $V$, $P(V)$ is a vertex Lie algebra and $G(V)$ is 
 an abelian semigroup with identity. While the proof for the part on $P(V)$ is straightforward, 
 the proof for the part on $G(V)$ is nonclassical. 
Note that for a classical bialgebra, the left multiplication associated to a group-like element 
is a coalgebra morphism, due to the fact that the multiplication map is a coalgebra morphism.
However, this is not the case for a vertex bialgebra; the ``vertex multiplication'' is {\em not} a coalgebra morphism.
 As the first key step we prove (Proposition \ref{group-like-group}) that 
 if $g,h\in G(V)$, then $g_nh=0$ for all $n\ge 0$, or equivalently $[Y(g,x_1),Y(h,x_2)]=0.$
Then we show that $G(V)$ is an abelian semigroup with the operation defined by $gh=g_{-1}h$ for $g,h\in G(V)$.
We also show that the vertex subalgebra of $V$ generated by $G(V)$ is a commutative and cocommutative vertex subbialgebra.

For $g\in G(V)$, let $V_g$ denote the connected component of $V$ containing $g$. 
Assume that $V$ is a cocommutative vertex bialgebra. By a classical result we have $V=\oplus_{g\in G(V)}V_g$.
As the first main result (Theorem \ref{main-result1}), we prove that $V_{\bf 1}$ is a vertex subbialgebra 
while $V_g$ is a module for $V_{\bf 1}$ as a vertex algebra for every $g\in G(V)$.
Note that the proof in the classical case no longer works for the same reason that
the ``vertex multiplication'' is {\em not} a coalgebra morphism. 
The main idea here is to employ the universal enveloping algebra of the Lie algebra ${\mathcal{L}}_{P(V)}$.
We prove that the natural vertex algebra morphism $\Psi: {\mathcal{V}}_{P(V)}\rightarrow V$ 
with $\Psi|_{P(V)}=1$ gives a vertex bialgebra isomorphism from ${\mathcal{V}}_{P(V)}$ to $V_{\bf 1}$.
This in particular implies that any cocommutative connected vertex bialgebra $V$ is isomorphic to
${\mathcal{V}}_{P(V)}$ (Corollary \ref{vla-identification}). 
Furthermore, assuming that $G(V)$ is a group and lies in the center of $V$,
we prove that $V={\mathcal{V}}_{P(V)}\otimes \C[G(V)]$ as a coalgebra, on which 
 the corresponding vertex algebra structure is explicitly exhibited (Theorem \ref{thm-second}). 
Motivated by this result,  we give a general construction of vertex (bi)algebras (Theorem \ref{mixed-vla-group});
we construct a vertex (bi)algebra $V\otimes_{\phi}\C[L]$
from a general vertex (bi)algebra $V$ and an abelian semigroup $L$ with identity 
with a map $\phi: L\rightarrow P(V)$ satisfying certain conditions.

Note that the category of commutative vertex algebras is canonically isomorphic to that of
 commutative differential algebras (which are commutative associative algebras equipped with one derivation).
 Commutative and cocommutative vertex bialgebras are essentially commutative and cocommutative differential bialgebras.
 This naturally leads us to a study on commutative and cocommutative differential bialgebras.
 To any abelian semigroup $L$ with identity, we associate a commutative and cocommutative differential bialgebra $B_L$
 and we obtain a characterization of $B_L$.

This paper is organized as follows: In Section 2, we recall basic notions and
 some fundamental results in the classical Hopf algebra theory, including 
 the basic results on group-like elements, primitive elements,  irreducible coalgebras and connected coalgebras.
In Section 3, we prove that for any vertex Lie  algebra $C$,
the associated vertex algebra ${\mathcal{V}}_C$ is a cocommutative connected vertex bialgebra with $P({\mathcal{V}}_C)=C$. 
Section 4 is the core of this paper, in which we prove that for any vertex bialgebra $V$, 
$G(V)$ is an abelian  semigroup with identity and  especially we prove that  every cocommutative connected vertex bialgebra $B$ 
is canonically isomorphic to the associated vertex bialgebra  ${\mathcal{V}}_{P(B)}$.
In Section 5, we study commutative and cocommutative vertex bialgebras, or equivalently
commutative and cocommutative differential bialgebras. 
In particular, we study those associated to abelian semigroups, in order 
to determine the structure of commutative and cocommutative vertex bialgebras.

For this paper, we work on the field $\C$ of complex numbers;  all vector spaces are assumed to be over $\mathbb{C}$.
In addition to the standard notation $\mathbb Z$ for the set of integers, we use $\mathbb N$ for the set of nonnegative integers
and $\Z_{+}$ for the set of positive integers.

\section{Preliminaries}

This section is preliminary.
In this section, we recall the basic notions about classical 
coalgebras and bialgebras, and collect the basic results we shall need.
References \cite{Abe}, \cite{M}, and \cite{Sw}  have been good resources for us,
and we refer the readers to these books for details.

A \emph{coalgebra} is a vector space $M$ equipped with linear maps $\Delta: M \rightarrow M \otimes M$, 
called the \emph{comultiplication}, and $\varepsilon: M \rightarrow \C$, called the \emph{counit}, such that
$$
\begin{aligned}
({\rm id}_M \otimes \Delta) \Delta &=(\Delta \otimes {\rm id}_M) \Delta, \\
\sum \varepsilon(a_{(1)}) a_{(2)} &=a=\sum \varepsilon(a_{(2)}) a_{(1)}\quad {\rm for}\ a\in M,
\end{aligned}
$$
 where $\Delta(a) = \sum a_{(1)}\otimes a_{(2)}$ in the Sweedler notation.
The comultiplication and the counit are often referred to as the {\em coalgebra structure maps}.

 A coalgebra $M$ is said to be \emph{cocommutative} if
 \begin{align*}
 \sum a_{(1)}\otimes a_{(2)}=\sum a_{(2)}\otimes a_{(1)}\quad  \text{for every }a\in M.
 \end{align*}

 A {\it subcoalgebra} of a coalgebra $M$ is a subspace $N$ such that
 $N$ with respect to the coalgebra structure of $M$ becomes a coalgebra itself,
 or equivalently, $\Delta(N) \subset N \otimes N$.
 A nonzero coalgebra $M$ is said to be \emph{simple} if $M$ and $\{0\}$ are the only subcoalgebras.
 
 \begin{remark}
 {\em The Fundamental Theorem of Coalgebras states that every coalgebra is the sum of its finite-dimensional subcoalgebras
 (cf. \cite{Abe}). 
This implies that every simple coalgebra is finite-dimensional. Furthermore, 
every nonzero minimal subcoalgebra of a coalgebra is simple and finite-dimensional.}
\end{remark}

 \begin{defi}\label{def-irreducible}
 {\em A coalgebra is said to be {\em irreducible} if it has a unique simple subcoalgebra. 
Furthermore, a maximal irreducible subcoalgebra is called an \emph{irreducible component}.}
\end{defi}

The following results can be found in \cite{Abe}: 

\begin{theo}\label{Abe-thm2.4.7}
Let $C$ be a coalgebra. Then

(1) Every irreducible subcoalgebra of $C$ is contained in an irreducible component of $C$.

(2) A sum of distinct irreducible components is a direct sum.

(3) If $C$ is cocommutative, then $C$ is the direct sum of its irreducible components. 
\end{theo}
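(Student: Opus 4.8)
The plan is to reduce all three parts to standard properties of the \emph{coradical} $C_0$ (the sum of all simple subcoalgebras, which is in fact a direct sum since distinct simple subcoalgebras intersect in $0$) together with the \emph{wedge product} $X\wedge Y=\Delta^{-1}(X\otimes C+C\otimes Y)$ and its coradical filtration $C_0\subseteq C_1\subseteq\cdots$, $C_n=C_{n-1}\wedge C_0$, which by the Fundamental Theorem of Coalgebras exhausts $C$. Two facts I would record first: for any subcoalgebra $D$ one has $D_0=D\cap C_0$ (a subcoalgebra of the cosemisimple $C_0$ is cosemisimple), and consequently $D$ is irreducible if and only if $D_0$ is simple. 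For part (1), fix an irreducible subcoalgebra $D$ with unique simple subcoalgebra $S$ and apply Zorn's Lemma to the inclusion-ordered poset of irreducible subcoalgebras containing $D$. The only point to verify is that the union $U$ of a chain of such subcoalgebras is irreducible: $U$ is a subcoalgebra, and any simple subcoalgebra $T\subseteq U$ is finite-dimensional, hence lies in a single member of the directed chain and must equal that member's unique simple subcoalgebra $S$; thus $S$ is the unique simple subcoalgebra of $U$. A maximal element is then an irreducible component containing $D$.

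For part (2), I would first prove the key lemma that the sum of two irreducible subcoalgebras sharing the same simple subcoalgebra $S$ is again irreducible with coradical $S$. Granting this, the sum of \emph{all} irreducible subcoalgebras with coradical $S$ is the unique irreducible component attached to $S$, so distinct components $E_i$ carry distinct simple coradicals $S_i$. To show $\sum_i E_i$ is direct, suppose $E_{i_0}\cap\sum_{i\ne i_0}E_i\ne 0$; this intersection is a subcoalgebra and so contains a simple subcoalgebra, which as a subcoalgebra of $E_{i_0}$ must be $S_{i_0}$, yet it also lies in the coradical of $\sum_{i\ne i_0}E_i$, which I would show by induction along the coradical filtration to be $\bigoplus_{i\ne i_0}S_i$. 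Since $S_{i_0}\ne S_i$ for all $i\ne i_0$ and $C_0=\bigoplus_i S_i$ is direct, this is a contradiction.

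For part (3), I must show $C=\sum_i E_i$, and since $C=\bigcup_n C_n$ it suffices to prove $C_n\subseteq\sum_i E_i$ by induction on $n$, the base case being $C_0=\bigoplus_i S_i$. The inductive step amounts to analysing the wedges $S_i\wedge S_j$ inside $C_{n}=C_{n-1}\wedge C_0$, and this is exactly where cocommutativity enters: if an element $x$ modulo lower filtration straddled two distinct simple subcoalgebras, comparing $\Delta x$ with $\tau\Delta x$ (where $\tau$ is the flip) via the identity $\Delta=\tau\Delta$ forces those two subcoalgebras to coincide. Hence no element can link distinct $S_i$, every $x\in C_n$ splits into summands each attached to a single component, and $C_n\subseteq\sum_i E_i$. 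Passing to the union and combining with part (2) yields $C=\bigoplus_i E_i$.

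I expect the genuine difficulty to be concentrated in the key lemma for part (2) and the splitting used in part (3), both of which come down to controlling the wedge products $S_i\wedge S_j$ of distinct simple subcoalgebras and showing, in the cocommutative setting, that no element outside $\bigoplus_i S_i$ can straddle two of them. Making the cocommutativity cancellation precise — equivalently, that distinct (group-like) components cannot be linked — is the technical heart; once it is in hand, the Zorn argument of part (1) and the directness bookkeeping of part (2) reduce to routine inductions along the coradical filtration.
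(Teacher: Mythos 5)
The paper itself gives no proof of this theorem --- it is stated as a quoted result from Abe's book --- so there is no in-paper argument to measure yours against; I can only judge the proposal on its own terms. Your plan for (1) and (2) is sound and follows the standard development: the Zorn argument in (1) works because a simple subcoalgebra is finite-dimensional (Fundamental Theorem of Coalgebras) and therefore lands inside a single member of a directed chain, and your key lemma for (2) --- that two irreducible subcoalgebras sharing their unique simple subcoalgebra have irreducible sum --- is exactly the right pivot. Be aware, though, that this lemma is itself the real content of (2): the clean proof passes to a finite-dimensional piece, dualizes, and uses that the maximal ideal $T^{\perp}$ of $(D+E)^{*}$ attached to a simple $T\subseteq D+E$ is prime, so $D^{\perp}E^{\perp}\subseteq D^{\perp}\cap E^{\perp}=0\subseteq T^{\perp}$ forces $T\subseteq D$ or $T\subseteq E$; this gives $\operatorname{corad}(D+E)=\operatorname{corad}(D)+\operatorname{corad}(E)$, which is what your ``induction along the coradical filtration'' for the directness claim actually needs.

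The genuine gap is in (3). Your computation with $\Delta=\tau\Delta$ does correctly show $S_i\wedge S_j=S_i\oplus S_j$ for distinct simples (intersecting $S_i\otimes C+C\otimes S_j$ with its flip leaves only $S_i\otimes S_i+S_j\otimes S_j$), but the inference ``hence every $x\in C_n$ splits into summands each attached to a single component'' does not follow from this alone. The wedge does not distribute over sums of subcoalgebras, so you cannot decompose $C_n=C_{n-1}\wedge C_0$ componentwise merely from knowing the pairwise wedges of the $S_i$; making the splitting precise requires either lifting the orthogonal idempotents of $C_0^{*}=\prod_i S_i^{*}$ to $C^{*}$ and showing that $x\mapsto e_i\rightharpoonup x$ projects onto a subcoalgebra with coradical $S_i$ (the link-indecomposable-component machinery), or, much more simply, the standard shortcut: by the Fundamental Theorem it suffices to treat a finite-dimensional subcoalgebra $D$, whose dual $D^{*}$ is a finite-dimensional commutative algebra and hence a finite product of local algebras; dualizing that decomposition exhibits $D$ as a direct sum of irreducible subcoalgebras, each of which lies in an irreducible component of $C$ by part (1). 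I would replace the filtration induction in (3) by this duality argument; as written, the crucial splitting step is asserted rather than proved.
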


Note that a simple coalgebra is an irreducible coalgebra from definition.
It is clear that every one-dimensional subcoalgebra of a coalgebra is simple.

\begin{defi}\label{def-pointed}
{\em  A coalgebra $M$ is said to be \emph{pointed} if every simple subcoalgebra of $M$ is one-dimensional.}
\end{defi}

\begin{remark}
{\em Note that a coalgebra version of the Nullstellensatz states that every cocommutative coalgebra over an
algebraically closed field is pointed.}
\end{remark}

\begin{defi}\label{conncted def}
\rm	For a coalgebra $M$, denote by $M^{(0)}$  the sum of all simple subcoalgebras of $M$,
which is called the \emph{coradical}  of $M$.
A coalgebra $M$ is said to be \emph{connected} if $M^{(0)}$ is one-dimensional, or equivalently, if $M$ is irreducible and pointed.	
\end{defi}

Let $M$ be a coalgebra.  An element $g$ of $M$ is said to be {\em group-like} if $\Delta(g)=g\otimes g$ and $\varepsilon(g)=1$.
Let $G(M)$ be the set of all group-like elements of $M$:
\begin{align}
G(M)=\{ g\in M\ |\   \Delta(g)=g\otimes g,\  \varepsilon(g)=1\}.
\end{align}
Note that each group-like element spans a one-dimensional (simple) subcoalgebra and 
on the other hand, every one-dimensional subcoalgebra is spanned by
a group-like element. Thus, a coalgebra $M$ is pointed if and only if every simple subcoalgebra is spanned by
a group-like element.  Furthermore, every nonzero pointed coalgebra $M$ contains at least one group-like element,
and every connected coalgebra $C$ has one and only one group-like element.

For $g\in G(M)$ with $M$ a nonzero coalgebra, let $M_g$ denote the irreducible (connected) component containing $g$.
It is known that $\oplus_{g\in G(M)}M_g$ is a subcoalgebra of $M$ and furthermore,
if $M$ is also cocommutative, then $M=\oplus_{g\in G(M)}M_g$.

 Let $(M,\Delta_M,\varepsilon_M)$ and $(N,\Delta_N,\varepsilon_N)$ be two coalgebras.
   A \emph{coalgebra morphism} from $M$ to $N$
  is a linear map $f: M\rightarrow N$ such that
  \begin{align}
  \Delta_N  f=(f \otimes f) \Delta_M\  \text{ and }\  \varepsilon_M=\varepsilon_N  f.
  \end{align}

The following are some basic results on connected coalgebras
 (cf. \cite{Abe}): 

\begin{theo}\label{Abe-C2.4.21-T2.4.22}
 (i) The image of a connected  coalgebra under a coalgebra morphism is connected. 
  (ii) If $C$ and $E$ are connected coalgebras, then $C\otimes E$ is connected.
\end{theo}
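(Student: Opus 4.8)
The plan is to reduce both statements to facts about finite-dimensional local algebras via linear duality, using the Fundamental Theorem of Coalgebras to move between the general and the finite-dimensional settings. The governing dictionary is that a nonzero finite-dimensional coalgebra $C$ is connected if and only if its dual algebra $C^*$ is local with residue field $\cong\C$; this follows from the correspondence between the coradical $C^{(0)}$ and the annihilator $J(C^*)^{\perp}$, since connectedness means $C^{(0)}$ is one-dimensional, equivalently $C^*/J(C^*)\cong\C$ with $J(C^*)$ nilpotent.

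For (i), first note that $D:=f(C)$ is a subcoalgebra of $N$, so I may assume $f\colon C\twoheadrightarrow D$ is surjective. Since $C$ is connected it has a unique group-like element $g_0$, and the morphism identities $\Delta_N(f(g_0))=f(g_0)\otimes f(g_0)$ and $\varepsilon(f(g_0))=1$ show that $\C f(g_0)$ is a one-dimensional (hence simple) subcoalgebra of $D$ with $\C f(g_0)\subseteq D^{(0)}$. It then remains to prove that every simple subcoalgebra $S$ of $D$ equals $\C f(g_0)$. By the Fundamental Theorem of Coalgebras $S$ is finite-dimensional, so $S\subseteq f(C')$ for some finite-dimensional subcoalgebra $C'\subseteq C$; as every nonzero subcoalgebra of $C$ contains the unique simple subcoalgebra $\C g_0$, we have $g_0\in C'$ and $(C')^{(0)}=C'\cap C^{(0)}=\C g_0$, so $C'$ is finite-dimensional and connected. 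Dualizing $C'\twoheadrightarrow f(C')$ gives an injective unital algebra map $(f(C'))^*\hookrightarrow (C')^*$, realizing $(f(C'))^*$ as a unital subalgebra of the local algebra $(C')^*$. Because $J((C')^*)$ is nilpotent with residue field $\cong\C$, any element of this subalgebra outside its maximal ideal has the form $\lambda\cdot 1+m$ with $\lambda\ne 0$ and $m$ nilpotent, whose inverse $\lambda^{-1}\sum_{k\ge 0}(-\lambda^{-1}m)^{k}$ is a polynomial in $m$ and therefore again lies in the subalgebra; hence $(f(C'))^*$ is local with residue field $\cong\C$, so $f(C')$ is connected. Thus $S\subseteq f(C')$ forces $S=\C f(g_0)$, proving that $D$ is irreducible and pointed, i.e. connected.

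For (ii), let $g$ and $h$ be the unique group-like elements of $C$ and $E$. A direct check with the tensor-product comultiplication shows $g\otimes h$ is group-like in $C\otimes E$, so $\C(g\otimes h)\subseteq (C\otimes E)^{(0)}$, and it suffices to establish the reverse inclusion. Writing $C=\bigcup_{\alpha}C'_{\alpha}$ and $E=\bigcup_{\beta}E'_{\beta}$ as directed unions of finite-dimensional subcoalgebras (each necessarily containing $g$, respectively $h$, hence connected by the argument above) gives $C\otimes E=\bigcup_{\alpha,\beta}C'_{\alpha}\otimes E'_{\beta}$. In the finite-dimensional case duality yields $(C'_{\alpha}\otimes E'_{\beta})^*\cong (C'_{\alpha})^*\otimes (E'_{\beta})^*$, and over $\C$ the radical of a tensor product of finite-dimensional algebras is computed factorwise, so that the quotient by the radical is $((C'_{\alpha})^*/J)\otimes((E'_{\beta})^*/J)\cong \C\otimes\C\cong\C$; thus $(C'_{\alpha}\otimes E'_{\beta})^*$ is local with residue field $\cong\C$ and $C'_{\alpha}\otimes E'_{\beta}$ is connected. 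Since each finite-dimensional piece is connected with the common simple subcoalgebra $\C(g\otimes h)$, their directed union $C\otimes E$ admits $\C(g\otimes h)$ as its only simple subcoalgebra, whence $(C\otimes E)^{(0)}=\C(g\otimes h)$ is one-dimensional and $C\otimes E$ is connected.

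The main obstacle in both parts is the passage between coalgebraic connectedness and the local structure of the finite-dimensional dual algebras: concretely, that a unital subalgebra of a finite-dimensional local $\C$-algebra with residue field $\cong\C$ is again local (for (i)), and that the radical of a tensor product of such algebras is computed factorwise (for (ii)). Both rely on nilpotency of the maximal ideal and on working over the algebraically closed field $\C$; the reductions to the finite-dimensional case are routine given the Fundamental Theorem of Coalgebras and the fact that every nonzero subcoalgebra of a connected coalgebra contains its unique simple subcoalgebra.
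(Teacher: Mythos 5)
Your proof is correct. Note that the paper itself offers no proof of this theorem: it is quoted as a known result with a citation to Abe's book, so there is nothing internal to compare against. Your route through linear duality --- identifying connectedness of a finite-dimensional coalgebra with locality of the dual algebra (residue field $\C$, nilpotent radical), and reducing the general case to the finite-dimensional one via the Fundamental Theorem of Coalgebras --- is essentially the standard textbook argument. The two nontrivial algebraic inputs are correctly isolated and correctly handled: a unital subalgebra of a finite-dimensional local $\C$-algebra with residue field $\C$ is again local (your geometric-series inversion, which works because $m=(\lambda 1+m)-\lambda 1$ lies in the subalgebra and is nilpotent), and the radical of a tensor product of finite-dimensional $\C$-algebras is computed factorwise (valid here since the residue algebras are copies of $\C$, hence separable). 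One small simplification: in part (i) you do not need the identity $(C')^{(0)}=C'\cap C^{(0)}$ in full; it is enough that every simple subcoalgebra of $C'$ is a simple subcoalgebra of $C$ and hence equals $\C g_0$, which is immediate. Likewise, in both parts the final step implicitly uses that a one-dimensional subcoalgebra spanned by a group-like element of an irreducible coalgebra must be \emph{the} unique simple subcoalgebra; this is true and worth stating explicitly when you conclude $S=\C f(g_0)$ and $S=\C(g\otimes h)$.
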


A \emph{bialgebra} is an associative  algebra with an identity, equipped with a coalgebra structure such that
the two coalgebra structure maps are associative algebra morphisms.
 A \emph{bialgebra morphism} is a linear map 
which is both an algebra morphism and a coalgebra morphism.

A \emph{Hopf algebra} is a bialgebra $H$ equipped with a linear endomorphism $S: H\rightarrow H$, called the \emph{antipode}, 
such that 
\begin{align}
\sum S(c_{(1)}) c_{(2)}=\varepsilon(c) 1=\sum c_{(1)} S(c_{(2)})\quad \text{ for  }c\in H.
\end{align}

\begin{exa}\label{UEA-la}
{\em Let $\g$ be any Lie algebra. Then the universal enveloping algebra $U(\g)$ is a cocommutative Hopf algebra,
where the Hopf algebra structure is uniquely determined by
\begin{align*}
\Delta(x)=x\otimes 1+1\otimes x,\quad \varepsilon(x)=0, \quad S(x)=-x \quad\text{  for  }x\in \g.
\end{align*}
Let $x\in \g$ be a nonzero vector. Set $x^{(0)}=1$ and $x^{(n)}=\frac{1}{n!}x^n\in U(\g)$ for $n\ge 1$. Then
\begin{align}\label{x-divided-powers}
\Delta(x^{(n)})=\sum_{i=0}^{n}x^{(n-i)}\otimes  x^{(i)},\quad \varepsilon(x^{(n)})= \delta_{n,0}
\quad \text{ for }n\in \N.
\end{align}}
\end{exa}

\begin{exa}\label{group-algebra}
{\em Let $G$ be any semigroup with identity $1$. 
Then the semigroup algebra $\C[G]$ is a bialgebra where the coalgebra structure is given by
\begin{align}
\Delta(g)=g\otimes g,\  \  \varepsilon(g)=1\quad \text{ for }g\in G.
\end{align}
Furthermore, if $G$ is a group, then $\C[G]$ is a Hopf algebra where the antipode $S$ is given by
\begin{align}
S(g)=g^{-1}\quad \text{ for }g\in G.
\end{align}}
\end{exa}

The following are some basic facts about group-like elements (cf. \cite{Abe}): 
 
\begin{prop}
(1) For any coalgebra $M$, $G(M)$ is a linearly independent subset. 
(2) For any bialgebra $B$, $G(B)$ is a semigroup with an identity and $\C[G(B)]$ is a subbialgebra.
(3)  A cocommutative bialgebra $B$ is a Hopf algebra if and only if $G(B)$ is a group.
\end{prop}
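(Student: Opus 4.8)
The plan is to treat the three parts in order, reserving the real work for the converse in~(3).

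For~(1), I would argue by a minimal-length linear dependence. Suppose $G(M)$ were linearly dependent and choose a nontrivial relation $\sum_{i=1}^{m}c_i g_i=0$ with $g_1,\dots,g_m\in G(M)$ distinct, all $c_i\ne 0$, and $m$ minimal. Minimality forces $m\ge 2$ and makes $g_1,\dots,g_{m-1}$ linearly independent, so I may write $g_m=\sum_{i=1}^{m-1}d_i g_i$ with every $d_i\ne 0$ (a vanishing $d_i$ would yield a shorter relation). Applying $\Delta$ gives $\sum_i d_i\,(g_i\otimes g_i)=\Delta(g_m)=g_m\otimes g_m=\sum_{i,j}d_id_j\,(g_i\otimes g_j)$, and since $\{g_i\otimes g_j\}$ is linearly independent in $M\otimes M$ I read off $d_id_j=0$ for $i\ne j$ and $d_i^2=d_i$ on the diagonal; applying $\varepsilon$ gives $\sum_i d_i=\varepsilon(g_m)=1$. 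These relations force exactly one $d_i$ to equal $1$ and the rest to vanish, whence $g_m=g_i$, contradicting distinctness. For~(2), I would compute directly from the fact that $\Delta$ and $\varepsilon$ are unital algebra morphisms: $\Delta(gh)=\Delta(g)\Delta(h)=(g\otimes g)(h\otimes h)=gh\otimes gh$ and $\varepsilon(gh)=\varepsilon(g)\varepsilon(h)=1$ show $gh\in G(B)$, while $\Delta(1)=1\otimes 1$ and $\varepsilon(1)=1$ show $1\in G(B)$; hence $G(B)$ is a monoid. By~(1) its span is the semigroup algebra $\C[G(B)]$, which is a subalgebra, and since $\Delta(g)=g\otimes g\in \C[G(B)]\otimes\C[G(B)]$ and $\varepsilon$ restricts, it is also a subcoalgebra, hence a subbialgebra.

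For the forward direction of~(3), assume $B$ is a Hopf algebra with antipode $S$ and let $g\in G(B)$. Evaluating the antipode axiom at $g$ and using $\Delta(g)=g\otimes g$ collapses the sums to $S(g)g=\varepsilon(g)1=1=gS(g)$, so $g$ is invertible with $g^{-1}=S(g)$. Moreover $g^{-1}$ is again group-like, since $\Delta(g^{-1})=\Delta(g)^{-1}=(g\otimes g)^{-1}=g^{-1}\otimes g^{-1}$ and $\varepsilon(g^{-1})=\varepsilon(g)^{-1}=1$. Thus $G(B)$ is a group; note that cocommutativity is not needed for this implication.

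The converse is the main obstacle, and this is where I would exploit that we work over $\C$. Since $B$ is cocommutative, the coalgebra Nullstellensatz recalled above makes $B$ pointed, so by Definition~\ref{conncted def} the coradical $B^{(0)}$ is exactly the sum of the one-dimensional subcoalgebras $\C g$, that is, $B^{(0)}=\C[G(B)]$. If $G(B)$ is a group, then by Example~\ref{group-algebra} this coradical is a Hopf subalgebra, with antipode $g\mapsto g^{-1}$. I would then produce an antipode on all of $B$ as a convolution inverse of $\mathrm{id}_B$ in the algebra $(\mathrm{Hom}(B,B),\ast)$, where $f\ast f'=\mu\circ(f\otimes f')\circ\Delta$ and the unit is $\eta\circ\varepsilon$ ($\mu,\eta$ the multiplication and unit of $B$). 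Using the coradical filtration $B^{(0)}\subseteq B^{(1)}\subseteq\cdots$ with $\bigcup_n B^{(n)}=B$ and $\Delta(B^{(n)})\subseteq\sum_{p+q=n}B^{(p)}\otimes B^{(q)}$, I would define $S$ recursively: set $S(g)=g^{-1}$ on $B^{(0)}$, and then extend $S$ from $B^{(n-1)}$ to a chosen complement in $B^{(n)}$ by solving $\mathrm{id}\ast S=\eta\varepsilon=S\ast\mathrm{id}$ one filtration level at a time, the filtration property ensuring that the new values are determined modulo the already-known lower-level data. The essential input making the base level solvable is precisely the existence of inverses in $G(B)$, and the resulting $S$ is a two-sided convolution inverse of $\mathrm{id}_B$, hence an antipode, so $B$ is a Hopf algebra. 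The delicate point I expect to spend the most care on is checking that this level-by-level extension is consistent and independent of the chosen complements; pointedness together with the group structure of $G(B)$ are the features I would lean on to make that work.
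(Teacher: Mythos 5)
The paper does not actually prove this proposition; it records it as a standard fact with a citation to Abe's book, so there is no in-paper argument to compare against. Judged on its own merits, your treatment of (1), (2), and the forward implication of (3) is complete and is the standard one: the minimal-dependence argument for linear independence of group-likes is correct (the relations $d_id_j=0$ for $i\ne j$ together with $d_i\ne 0$ force a single surviving term, and $d_i^2=d_i$ then pins it to $1$), and the computations for (2) and for $S(g)=g^{-1}\in G(B)$ are exactly right; you are also correct that cocommutativity is not needed for that direction.

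For the converse of (3) your strategy --- pointedness over $\C$, identification of the coradical with $\C[G(B)]$, and construction of the antipode as a convolution inverse of $\mathrm{id}_B$ built up along the coradical filtration --- is the standard route (Takeuchi's lemma), and the key observation that invertibility in $G(B)$ is precisely what makes the base level solvable is the right one. However, the recursive step is asserted rather than carried out, and as written it does not quite work from the filtration property $\Delta(B^{(n)})\subseteq\sum_{p+q=n}B^{(p)}\otimes B^{(q)}$ alone: to actually solve for $S(c)$ at level $n$ you need the sharper pointed-coalgebra statement that, for $c$ in the connected component $B_g$ with $\varepsilon(c)=0$, one has $\Delta(c)-c\otimes g-g\otimes c\in B^{(n-1)}\otimes B^{(n-1)}$; then $S\ast\mathrm{id}=\eta\varepsilon$ reads $S(c)\,g=-S(g)\,c-(\text{known lower-level terms})$ and is solved by right-multiplying by $g^{-1}$. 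One also avoids the well-definedness worry about chosen complements by constructing a left convolution inverse and a right convolution inverse separately and invoking that left and right inverses in the convolution monoid $(\mathrm{Hom}(B,B),\ast)$ must coincide. With those two points supplied, your outline becomes a complete proof.
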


 Let $M$ be a coalgebra. For $g,h\in G(M)$, set
 \begin{align}
 P_{g,h}(M)=\{u \in M \mid \Delta(u) = u\otimes g+ h\otimes u\}.
 \end{align}
 Suppose that $B$ is a bialgebra. Note that $1\in G(B)$.  Define $P(B)=P_{1,1}(B)$:
  \begin{align}
 P(B)=\{u \in B \mid \Delta(u) = u\otimes 1+ 1\otimes u\}.
 \end{align}
 A basic result   (cf. \cite{Abe}) is that for any bialgebra $B$, $P(B)$ is a Lie subalgebra of $B_{\rm Lie}$, 
 where $B_{\rm Lie}$ denotes the Lie algebra with $[a,b]=ab-ba$ for $a,b\in B$.

The following is a well known fact (cf. \cite{M}):

\begin{prop}\label{U(g)}
For any Lie algebra $\g$, the universal enveloping algebra $U(\g)$ as a coalgebra is cocommutative and connected
with $P(U(\g))=\g$.
\end{prop}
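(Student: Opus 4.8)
The plan is to establish the three assertions—cocommutativity, the identity $P(U(\g))=\g$, and connectedness—separately, relying throughout on the Hopf algebra structure recorded in Example~\ref{UEA-la} and on the Poincar\'e--Birkhoff--Witt (PBW) theorem. Cocommutativity is the easiest part. Let $\tau\colon U(\g)\otimes U(\g)\to U(\g)\otimes U(\g)$ denote the flip $a\otimes b\mapsto b\otimes a$, which is an algebra automorphism of the tensor square. Both $\Delta$ and $\tau\Delta$ are then algebra morphisms out of $U(\g)$, and on each generator $x\in\g$ they agree because $\Delta(x)=x\otimes 1+1\otimes x$ is symmetric. Since $\g$ generates $U(\g)$ as an algebra, I would conclude $\tau\Delta=\Delta$, so $U(\g)$ is cocommutative.

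For $P(U(\g))=\g$, the inclusion $\g\subseteq P(U(\g))$ is immediate from the definition of $\Delta$ on generators. For the reverse inclusion I would fix a totally ordered basis $\{x_i\}$ of $\g$ and work with the PBW basis of ordered monomials $m=x_{i_1}^{a_1}\cdots x_{i_k}^{a_k}$ (with $i_1<\cdots<i_k$, $a_j\ge 1$). Since $\Delta$ is multiplicative and the summands of $\Delta(x_i)$ commute in the tensor square, the binomial theorem yields an explicit expansion of $\Delta(m)$ into PBW tensor monomials in which the multisets of generators occurring in the two tensor factors together recover that of $m$. Given a primitive $u=\sum_m c_m m$, applying $(\varepsilon\otimes{\rm id})$ forces $\varepsilon(u)=0$, so the coefficient of $1$ vanishes; projecting the relation $\Delta(u)=u\otimes 1+1\otimes u$ onto $I\otimes I$, where $I=\ker\varepsilon$, annihilates the right-hand side. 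Now for any monomial $m$ of total degree $\ge 2$ occurring in $u$, the term $x_{i_1}\otimes(x_{i_1}^{a_1-1}x_{i_2}^{a_2}\cdots x_{i_k}^{a_k})$ lies in $I\otimes I$, appears in $\Delta(m)$ with coefficient $a_1\ne 0$ (here $\mathrm{char}\,\C=0$ is used), and by the multiset bookkeeping can arise from no other PBW monomial. Hence its total coefficient $c_m a_1$ must vanish, forcing $c_m=0$, so only degree-one monomials survive and $u\in\g$.

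For connectedness I would first show $G(U(\g))=\{1\}$. Writing a group-like element as $g=1+v$ with $v\in I$ (using $\varepsilon(g)=1$), the identity $\Delta(g)=g\otimes g$ becomes $\Delta(v)=1\otimes v+v\otimes 1+v\otimes v$. Since $\Delta$ preserves the degree filtration, every term of $\Delta(v)$ has total degree at most $d:=\deg v$; but if $v\ne 0$, with nonzero top-degree part $v_d$ ($d\ge 1$), then the right-hand side has bidegree-$(d,d)$ component $v_d\otimes v_d\ne 0$ of total degree $2d>d$, a contradiction. Thus $v=0$ and $g=1$. Finally, since $U(\g)$ is cocommutative over the algebraically closed field $\C$, it is pointed, so its coradical is the span of its group-like elements, namely $\C 1$; being one-dimensional, this shows $U(\g)$ is connected.

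The main obstacle is the reverse inclusion $P(U(\g))\subseteq\g$, which is precisely where the PBW theorem is genuinely needed: the entire difficulty lies in ruling out primitive elements of degree $\ge 2$, and the cancellation argument must isolate a single tensor term whose coefficient detects $c_m$ and which cannot be produced by any competing monomial. The group-like computation underpinning connectedness is a variant of the same top-degree bookkeeping and should present no essential new difficulty once the filtration behaviour of $\Delta$ has been recorded.
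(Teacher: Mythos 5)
The paper offers no proof of this proposition at all --- it is stated as a well-known fact with a citation to \cite{M} --- so there is no internal argument to compare yours against; what you have written is a correct, self-contained rendition of the standard textbook proof. All three steps check out. Cocommutativity via agreement of the algebra morphisms $\Delta$ and $\tau\Delta$ on the generating set $\g$ is fine. For $P(U(\g))\subseteq\g$, the key point is sound: expanding $\Delta(x_{i_1}^{a_1}\cdots x_{i_k}^{a_k})$ binomially produces only tensor products of PBW-\emph{ordered} monomials (no reordering is needed since $i_1<\cdots<i_k$ persists in each factor), the union of the multisets of generators in the two factors of any resulting term recovers that of $m$, so the basis element $x_{i_1}\otimes x_{i_1}^{a_1-1}x_{i_2}^{a_2}\cdots x_{i_k}^{a_k}$ occurs in the $I\otimes I$-component of $\Delta(u)$ with coefficient exactly $c_m a_1$, forcing $c_m=0$ in characteristic zero. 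For connectedness, the group-like computation works; the one point worth spelling out is the linear-algebra fact that $w\otimes w\notin U_{d-1}\otimes U(\g)+U(\g)\otimes U_{d-1}$ whenever $w\in U_d\setminus U_{d-1}$, which is what makes the top ``bidegree'' component of $v\otimes v$ genuinely survive and produce the contradiction; the final appeal to pointedness of cocommutative coalgebras over $\C$ then matches the paper's Definition \ref{conncted def}. A slightly shorter route, using machinery the paper assembles immediately after this proposition, is to note that the map $\Psi_{\g}$ of (\ref{phi-g}) is a coalgebra isomorphism from ${\mathcal{B}}(\g)$ to $U(\g)$ (this is precisely PBW plus your binomial computation) and to transport connectedness and $P({\mathcal{B}}(\g))=\g$ from Proposition \ref{shuffle-primitive}; but your direct argument is equally valid and has the advantage of not depending on the shuffle-coalgebra formalism.
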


The following is one of the fundamental results on Hopf algebras 
(see  \cite{Sw}, \cite{M}): 

\begin{theo}\label{thm of pointed Hopf alg}
Let $H$ be a cocommutative (pointed) Hopf algebra and let $H_1$ be the irreducible connected component containing $1$. 
Then  $H_1$ is a Hopf subalgebra of $H$ such that $P(H_1)=P(H)$ and $H_1\cong U(P(H))$.
Furthermore, $H\cong H_1 \sharp \mathbb{C}[G(H)]$ (a smash product).
\end{theo}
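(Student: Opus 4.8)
The plan is to bootstrap the coalgebra decomposition $H=\oplus_{g\in G(H)}H_g$, which is available because $H$ is cocommutative (Theorem \ref{Abe-thm2.4.7}(3)), all the way up to the Hopf level, and then to identify the identity component $H_1$ with $U(P(H))$. First I would show that $H_1$ is a Hopf subalgebra. The key input is that in a bialgebra the multiplication $m\colon H\otimes H\to H$ is a coalgebra morphism. Since $H_1\otimes H_1$ is connected by Theorem \ref{Abe-C2.4.21-T2.4.22}(ii), its image $m(H_1\otimes H_1)=H_1\cdot H_1$ is connected by part (i) of the same theorem; as it contains $1\cdot 1=1$, it lies in the connected component $H_1$, so $H_1\cdot H_1\subseteq H_1$. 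Together with $1\in H_1$ this makes $H_1$ a subalgebra, and it is a subcoalgebra by construction. For the antipode I would use that for a cocommutative Hopf algebra $S$ is a coalgebra morphism $H\to H$; then $S(H_1)$ is connected and contains $S(1)=1$, whence $S(H_1)\subseteq H_1$, and $H_1$ is a Hopf subalgebra.

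Next, $P(H_1)=P(H)$. The inclusion $P(H_1)\subseteq P(H)$ is immediate. For the converse, given $u\in P(H)$ the subspace $\C 1+\C u$ is a subcoalgebra whose only group-like element is $1$ (any group-like $a1+bu$ forces $b^2=0$ from the $u\otimes u$-coefficient, then $a=1$); hence it is connected and, containing $1$, lies in $H_1$, so $u\in P(H_1)$. With this I would build the canonical map $U(P(H))\to H_1$: the inclusion $P(H)\hookrightarrow (H_1)_{\mathrm{Lie}}$ is a Lie algebra morphism, so by the universal property of the enveloping algebra it extends to an algebra morphism, which is automatically a Hopf morphism of connected Hopf algebras (cf. Proposition \ref{U(g)}) since it preserves $\Delta(x)=x\otimes 1+1\otimes x$ on generators. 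To prove it is an isomorphism I would pass to the coradical filtration on $H_1$ and the standard PBW filtration on $U(P(H))$: the map is filtered, and on the associated graded it becomes a comparison of two graded connected commutative cocommutative Hopf algebras over $\C$. Both associated graded objects are the symmetric algebra $S(P(H))$—for $U(P(H))$ by PBW, for $\mathrm{gr}\,H_1$ by the structure theorem in characteristic zero—so the induced map is an isomorphism, whence the filtered map itself is.

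For the smash product decomposition I would note that since $H$ is a Hopf algebra $G(H)$ is a group, with $g^{-1}=S(g)$, and $G(H)$ acts on $H_1$ by conjugation $u\mapsto g\,u\,g^{-1}$, which preserves $H_1$ because conjugation by a group-like element is a coalgebra automorphism fixing $1$. Since right multiplication by a group-like $g$ is also a coalgebra morphism, $H_1\cdot g$ is a connected subcoalgebra containing $g$, giving $H_1 g\subseteq H_g$, and applying $g^{-1}$ yields the reverse inclusion, so $H_g=H_1 g$; similarly $H_g\cdot H_h\subseteq H_{gh}$. Hence the linear map $H_1\otimes\C[G(H)]\to H$, $u\otimes g\mapsto ug$, matches the decomposition $H=\oplus_g H_1 g$ and is a coalgebra isomorphism, and it becomes an algebra isomorphism once the domain carries the smash product multiplication $(u\otimes g)(v\otimes h)=u(g v g^{-1})\otimes gh$ determined by the conjugation action. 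Combining with $H_1\cong U(P(H))$ gives $H\cong H_1\,\sharp\,\C[G(H)]\cong U(P(H))\,\sharp\,\C[G(H)]$.

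I expect the genuine obstacle to be the isomorphism $H_1\cong U(P(H))$: writing down the filtered map is routine, but identifying $\mathrm{gr}\,H_1$ with $S(P(H))$—equivalently, proving that a graded connected commutative cocommutative Hopf algebra over $\C$ is free on its primitives—is the step where characteristic zero is used essentially and where the real content lies. Once that is in place, the passage to the smash product is purely formal bookkeeping of the components $H_1 g$ and the conjugation action.
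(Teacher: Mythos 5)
The paper does not actually prove this theorem: it is quoted as a classical result with references to Sweedler and Montgomery, so there is no in-paper argument to compare against. Judged on its own, your architecture is the standard one and is sound where it is explicit: using that $m$ and (by cocommutativity) $S$ are coalgebra morphisms together with Theorem \ref{Abe-C2.4.21-T2.4.22} to get $H_1\cdot H_1\subseteq H_1$ and $S(H_1)\subseteq H_1$; the two-dimensional subcoalgebra trick for $P(H)\subseteq P(H_1)$; and the bookkeeping $H_g=H_1g$, $H_gH_h\subseteq H_{gh}$, conjugation action, smash product, which is all correct. The one place where you have not really given a proof is the step you yourself flag as the crux, $H_1\cong U(P(H))$: passing to the coradical filtration requires knowing that $\mathrm{gr}\,H_1$ is \emph{commutative} (i.e.\ $[F_m,F_n]\subseteq F_{m+n-1}$ for the coradical filtration of a connected cocommutative Hopf algebra) and that it is primitively generated with primitives exactly $P(H)$ in degree one; together with the Hopf--Borel/graded Milnor--Moore theorem these are essentially the whole content of Cartier--Gabriel, so your sketch defers rather than resolves the difficulty. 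Within this paper's own toolkit there is a much shorter route that avoids $\mathrm{gr}$ entirely: the canonical algebra map $U(P(H))\to H_1$ is a coalgebra morphism between cocommutative connected coalgebras restricting to the identity $P(H)\to P(H_1)=P(H)$ on primitives, so Corollary \ref{B(g)=U(g)} (equivalently Theorem \ref{Important}) makes it bijective at once --- this is precisely the mechanism the authors use to prove the vertex analogue, Theorem \ref{main-result1}. Of course Theorem \ref{Important} encapsulates the same characteristic-zero content (cofreeness of ${\mathcal{B}}(U)$), but since the paper takes it as given, invoking it is the cleaner and more self-consistent way to close your one real gap.
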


Let $U$ be a vector space (over $\C$) and let $\{v_\lambda\}_{\lambda\in \Lambda}$ be a basis with $\Lambda$ totally ordered.   
Set
\begin{equation}
\mathbb N^\Lambda_{\rm fin}=\{f:\Lambda\rightarrow \mathbb N \mid f(\lambda)=0
\text{ for all but finitely many } \lambda \in \Lambda\}.
\end{equation}
As $\mathbb N$ with the ordinary addition is a semigroup with $0$ as its identity,
 $\mathbb N^\Lambda_{\rm fin}$ is naturally an abelian semigroup with the zero function as its identity:
For $f,g\in \mathbb N^\Lambda_{\rm fin}$, $f+g$ is defined by
$$(f+g)(\lambda)=f(\lambda)+g(\lambda)\quad \text{ for }\lambda\in \Lambda.$$
For $f\in \mathbb N^\Lambda_{\rm fin}$,  set
$$
|f|= \sum_{\lambda \in \Lambda}f(\lambda),$$
and for $f,g\in \mathbb N^\Lambda_{\rm fin}$,  set
$$\binom{f}{g}= \prod_{\lambda \in \Lambda}\binom{f(\lambda)}{g(\lambda)},$$
where  $\binom{m}{k}=\frac{m\dot(m-1)\cdots(m-k+1)}{k!}$ for $m,k\in \mathbb N$.

The following result can be found in \cite{Sw}: 

\begin{theo} \label{basis of B(V)}
Define ${\mathcal{B}}(U)$ to be the vector space  (over $\C$) with a basis $\{v_{(f)} \mid f \in {\mathbb N}^\Lambda_{\rm fin}\}$.
Then ${\mathcal{B}}(U)$ is a bialgebra with the algebra and coalgebra structure maps given by
\begin{align}\label{d of B(V)}
&v_{(f)}v_{(g)}=\binom{f+g}{f}v_{(f+g)},\quad v_{(0)}=1,\\	
&\Delta(v_{(f)})= \sum_{g+h=f}v_{(g)}\otimes v_{(h)}, 
	\quad \varepsilon(v_{(f)})=\delta_{|f|,0}\quad \text{ for }f,g \in {\mathbb N}^\Lambda_{\rm fin}.
\end{align}
\end{theo}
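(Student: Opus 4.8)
The plan is to verify in turn the three families of axioms that make $\mathcal{B}(U)$ a bialgebra: that $(\mathcal{B}(U),\cdot,v_{(0)})$ is an associative unital algebra, that $(\mathcal{B}(U),\Delta,\varepsilon)$ is a coalgebra, and finally that $\Delta$ and $\varepsilon$ are algebra morphisms. The organizing observation, which I would state at the outset, is that \emph{every} structure constant appearing here factors as a product over $\lambda\in\Lambda$: we have $\binom{f}{g}=\prod_{\lambda\in\Lambda}\binom{f(\lambda)}{g(\lambda)}$ by definition, and an additive decomposition $g+h=f$ of a multi-index amounts to a choice of decomposition $g(\lambda)+h(\lambda)=f(\lambda)$ independently for each $\lambda$. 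Consequently each identity below factors as a product (or an $\Lambda$-indexed sum that factors into a product) of scalar identities, and it suffices to prove the corresponding statement in the rank-one case $\Lambda=\{*\}$, where $v_{(f)}$ is indexed by a single integer $n\in\N$. In fact in that rank-one case the assignment $v_{(n)}\mapsto \frac{1}{n!}x^{n}=x^{(n)}$ identifies $\mathcal{B}(\C v)$ with the Hopf algebra $U(\C x)=\C[x]$ of Example \ref{UEA-la}, since the divided-power comultiplication \eqref{x-divided-powers} matches $\Delta(v_{(n)})=\sum_{i+j=n}v_{(i)}\otimes v_{(j)}$ and $v_{(m)}v_{(n)}=\binom{m+n}{m}v_{(m+n)}$ corresponds to $x^{(m)}x^{(n)}=\binom{m+n}{m}x^{(m+n)}$; the general $\mathcal{B}(U)$ is then the restricted tensor product of these rank-one factors. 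This already proves the theorem conceptually, but for completeness I would carry out the direct verification, since the infinite tensor product needs the finite-support remark to be made precise.

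For the algebra structure, associativity of $\cdot$ reduces, after cancelling the product over $\lambda$, to the scalar identity $\binom{a+b}{a}\binom{a+b+c}{a+b}=\binom{b+c}{b}\binom{a+b+c}{a}$ for $a,b,c\in\N$, both sides being the trinomial coefficient $\frac{(a+b+c)!}{a!\,b!\,c!}$. The element $v_{(0)}$ is a two-sided unit because $\binom{f}{0}=\prod_\lambda\binom{f(\lambda)}{0}=1$ gives $v_{(0)}v_{(f)}=v_{(f)}$; the symmetry $\binom{f+g}{f}=\binom{f+g}{g}$ moreover shows the algebra is commutative, though this is not needed. For the coalgebra structure, coassociativity is immediate: both $(\Delta\otimes{\rm id})\Delta(v_{(f)})$ and $({\rm id}\otimes\Delta)\Delta(v_{(f)})$ collapse to $\sum_{a+b+c=f}v_{(a)}\otimes v_{(b)}\otimes v_{(c)}$. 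The counit axiom holds because in $\sum_{g+h=f}\varepsilon(v_{(g)})v_{(h)}$ only the term with $g=0$ survives (as $\varepsilon(v_{(g)})=\delta_{|g|,0}$), returning $v_{(f)}$, and symmetrically on the other side.

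The substantive step, and the one I expect to be the only real obstacle, is that $\Delta$ is multiplicative, i.e.\ $\Delta(v_{(f)}v_{(g)})=\Delta(v_{(f)})\Delta(v_{(g)})$ in $\mathcal{B}(U)\otimes\mathcal{B}(U)$. The left side equals $\binom{f+g}{f}\sum_{a+b=f+g}v_{(a)}\otimes v_{(b)}$. Expanding the right side with the componentwise multiplication on the tensor square gives $\sum_{p+q=f}\sum_{r+s=g}\binom{p+r}{p}\binom{q+s}{q}\,v_{(p+r)}\otimes v_{(q+s)}$; collecting the coefficient of a fixed $v_{(a)}\otimes v_{(b)}$ with $a+b=f+g$ (so $r=a-p$, $q=f-p$, $s=b-q$ are determined by $p$) yields $\sum_{p}\binom{a}{p}\binom{b}{f-p}$, where terms with $p$ outside the admissible range contribute zero binomials automatically. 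By the Vandermonde convolution identity this sum is $\binom{a+b}{f}=\binom{f+g}{f}$, matching the left side. The care needed here is precisely in tracking the index constraints so that the factored sum reduces, for each $\lambda$, to a genuine scalar Vandermonde identity. Finally, multiplicativity of $\varepsilon$ follows from $|f+g|=|f|+|g|$, so $\varepsilon(v_{(f)}v_{(g)})=\binom{f+g}{f}\delta_{|f|+|g|,0}=\delta_{|f|,0}\delta_{|g|,0}=\varepsilon(v_{(f)})\varepsilon(v_{(g)})$, while $\Delta(v_{(0)})=v_{(0)}\otimes v_{(0)}$ and $\varepsilon(v_{(0)})=1$ show the unit is respected. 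Together these establish that $\mathcal{B}(U)$ is a bialgebra.
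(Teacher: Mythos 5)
Your verification is correct and complete. The paper itself offers no proof of this theorem --- it is stated with a citation to Sweedler's book --- so your self-contained argument fills in what the paper takes for granted. All the computations check out: associativity reduces to the multinomial identity $\binom{a+b}{a}\binom{a+b+c}{a+b}=\frac{(a+b+c)!}{a!\,b!\,c!}=\binom{b+c}{b}\binom{a+b+c}{a}$, coassociativity and the counit axiom are immediate, and the one genuinely substantive step --- compatibility of $\Delta$ with the product --- is correctly reduced to the Vandermonde convolution $\sum_{p}\binom{a}{p}\binom{b}{f-p}=\binom{a+b}{f}$, with the boundary terms correctly disposed of by the convention $\binom{m}{k}=0$ for $k>m$. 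Your opening observation that everything factors over $\lambda\in\Lambda$ and that the rank-one factor is exactly the divided-power bialgebra $\C[x^{(n)}]\cong U(\C x)$ of the paper's Example \ref{UEA-la} is also the right conceptual frame; it is in effect the abelian case of the paper's coalgebra isomorphism $\Psi_{\mathfrak{g}}:\mathcal{B}(\mathfrak{g})\to U(\mathfrak{g})$ from \eqref{phi-g}, which for abelian $\mathfrak{g}$ is an isomorphism of bialgebras $\mathcal{B}(U)\cong S(U)$. The only point I would flag is minor: passing from the rank-one case to general $\Lambda$ via a ``restricted tensor product'' of bialgebras deserves a sentence (it is a direct limit of finite tensor products of bialgebras over finite subsets of $\Lambda$, using that each $v_{(f)}$ has finite support), but since you carry out the direct verification in full anyway, nothing rests on that shortcut.
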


\begin{remark}
{\em Note that the bialgebra ${\mathcal{B}}(U)$ was originally defined as the (unique) maximal cocommutative subcoalgebra,
namely,  the sum of all cocommutative subcoalgebras, of the ``{\em shuffle algebra}'' $\operatorname{Sh}(U)$. 
As here we only need to use  ${\mathcal{B}}(U)$, we simply introduce this bialgebra this way.}
\end{remark}

Note that ${\mathcal{B}}(U)=\coprod_{i\in \N}{\mathcal{B}}(U)_{(i)}$ is an $\N$-graded bialgebra, where for $i\in \N$, 
\begin{align}
{\mathcal{B}}(U)_{(i)}=\text{span}\{ v_{(f)}\ |\  f \in {\mathbb N}^\Lambda_{\rm fin}\  \text{with }|f|=i\}.
\end{align}
In particular, ${\mathcal{B}}(U)_{(0)}=\C$ and ${\mathcal{B}}(U)_{(1)}=U$. 
Then we have a canonical projection map
\begin{align}
\pi: \  {\mathcal{B}}(U)\rightarrow U.
\end{align}

The following result can be found in \cite{Sw}: 

\begin{prop}\label{shuffle-primitive}
Let $U$ be a vector space. Then  ${\mathcal{B}}(U)$ is  a commutative and cocommutative connected bialgebra with
$P({\mathcal{B}}(U))=U$. On the other hand, ${\mathcal{B}}(U)$ is a cofree  cocommutative coalgebra
in the sense that for any cocommutative coalgebra $C$ together with a linear map $\psi: C\rightarrow U$, there exists a unique 
coalgebra morphism $\Psi:  C\rightarrow {\mathcal{B}}(U)$ such that $\pi\circ \Psi=\psi$.
\end{prop}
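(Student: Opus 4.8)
The plan is to verify the five assertions in turn -- commutativity, cocommutativity, the computation of $P(\mathcal{B}(U))$, connectedness, and finally cofreeness, which is where the real work lies. Since Theorem \ref{basis of B(V)} already supplies the bialgebra structure, the first two are symmetry observations. Commutativity follows from the componentwise identity $\binom{f+g}{f}=\binom{f+g}{g}$, which gives $v_{(f)}v_{(g)}=v_{(g)}v_{(f)}$; cocommutativity follows because the sum $\Delta(v_{(f)})=\sum_{g+h=f}v_{(g)}\otimes v_{(h)}$ is visibly invariant under the flip after reindexing $(g,h)\mapsto(h,g)$. For the primitive elements I would write a general element as $u=\sum_f c_f v_{(f)}$ and compare $\Delta(u)=\sum_{p,q}c_{p+q}\,v_{(p)}\otimes v_{(q)}$ with $u\otimes 1+1\otimes u$. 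Matching coefficients of $v_{(p)}\otimes v_{(q)}$ forces $c_0=0$ (from $p=q=0$) and $c_{p+q}=0$ whenever $p,q\neq 0$. Since every $f$ with $|f|\ge 2$ can be written as $p+q$ with $p,q\neq 0$, this leaves only $|f|=1$; and conversely each $v_{(e_\lambda)}=v_\lambda$ is primitive. Hence $P(\mathcal{B}(U))=\mathcal{B}(U)_{(1)}=U$.

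For connectedness I would first determine $G(\mathcal{B}(U))$. Writing $g=\sum_f c_f v_{(f)}$, the condition $\Delta(g)=g\otimes g$ becomes $c_{p+q}=c_pc_q$ for all $p,q$, while $\varepsilon(g)=1$ gives $c_0=1$. If $c_f\neq 0$ for some $f\neq 0$, then $c_{nf}=c_f^{\,n}\neq 0$ for all $n\ge 1$, contradicting the finiteness of the support of $g$; thus $g=v_{(0)}=1$ is the unique group-like element. Since $\mathcal{B}(U)$ is cocommutative over $\C$, it is pointed by the coalgebra Nullstellensatz, so every simple subcoalgebra is one-dimensional and hence spanned by a group-like element. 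Therefore the coradical is $\C\cdot 1$ and $\mathcal{B}(U)$ is connected. (Alternatively, $\mathcal{B}(U)$ is a graded coalgebra with $\mathcal{B}(U)_{(0)}=\C$, which is automatically connected.)

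Cofreeness is the main obstacle. The key structural input is that the graded dual of $\mathcal{B}(U)$ is the symmetric (polynomial) algebra on $U^*$: the dual basis satisfies $v_{(f)}^*\,v_{(g)}^*=v_{(f+g)}^*$, so cofreeness of $\mathcal{B}(U)$ is, morally, dual to the freeness of the symmetric algebra. Concretely, given a cocommutative coalgebra $C$ and $\psi:C\to U$, I would build $\Psi$ from the iterated reduced coproducts. Writing $C=\C 1\oplus C^+$ with $C^+=\ker\varepsilon$ and $\overline{\Delta}(c)=\Delta(c)-c\otimes 1-1\otimes c$ on $C^+$, connectedness (conilpotency) guarantees that for each $c$ the iterates $\overline{\Delta}^{(n)}(c)$ vanish for $n$ large, so the following series is finite:
\begin{align}
\Psi(1)=1,\qquad \Psi(c)=\sum_{n\ge 1}\frac{1}{n!}\,\mu_n\big(\psi^{\otimes n}\,\overline{\Delta}^{(n-1)}(c)\big)\quad (c\in C^+),
\end{align}
where $\mu_n:U^{\otimes n}\to \mathcal{B}(U)_{(n)}$ is the $n$-fold product in $\mathcal{B}(U)$; the factor $1/n!$ places $\mu_n(v_\lambda^{\otimes n})=v_\lambda^n$ onto the divided-power generator $v_{(ne_\lambda)}$, since $v_\lambda^n=n!\,v_{(ne_\lambda)}$. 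Applying $\pi$ kills every term except $n=1$, giving $\pi\circ\Psi=\psi$.

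The heart of the argument, and the step I expect to be the most delicate, is verifying that this $\Psi$ is a coalgebra morphism, i.e. $\Delta_{\mathcal{B}(U)}\Psi=(\Psi\otimes\Psi)\Delta_C$; this is a computation that plays coassociativity and the cocommutativity of $C$ (which makes the symmetric products $\mu_n$ compatible) against the divided-power coproduct of $\mathcal{B}(U)$. Uniqueness I would obtain by induction along the coradical filtration $C_0\subseteq C_1\subseteq\cdots$ (with $C_0$ the coradical): the component of $\Psi$ landing in $\mathcal{B}(U)_{(1)}$ is forced to equal $\psi$ by $\pi\circ\Psi=\psi$, and the coalgebra-morphism equation then determines the higher-degree components of $\Psi$ recursively from the lower ones, so any two solutions coincide. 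Connectedness of $C$ is precisely what makes this induction exhaust $C$ and the defining series terminate, and I would keep track of where it is used throughout.
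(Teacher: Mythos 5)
The paper does not actually prove this proposition --- it is quoted from Sweedler's book \cite{Sw} without argument --- so your self-contained proof is necessarily a different route, and it is the standard one. The first four assertions (commutativity, cocommutativity, $P(\mathcal{B}(U))=U$, connectedness) are exactly the coefficient computations one should do, and they are correct; in particular the observation that $c_{p+q}=c_pc_q$ together with finiteness of support forces $v_{(0)}$ to be the unique group-like is the right way to get connectedness. For cofreeness, your formula $\Psi=\varepsilon+\sum_{n\ge1}\frac{1}{n!}\,\mu_n\,\psi^{\otimes n}\,\overline{\Delta}^{(n-1)}$ is the correct lift, and the one genuinely nontrivial step --- that $\Delta\Psi=(\Psi\otimes\Psi)\Delta$ --- is the step you describe but do not carry out. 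It does go through; the cleanest packaging is to note that $\Psi$ is the convolution exponential $\exp_{\ast}(j\circ\psi)$ in $\operatorname{Hom}(C,\mathcal{B}(U))$, where $j:U\to\mathcal{B}(U)_{(1)}$ is the inclusion, and that the convolution exponential of a map landing in primitives, on a cocommutative conilpotent coalgebra, is automatically a coalgebra morphism.

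One point deserves emphasis: your argument uses connectedness (conilpotency) of $C$ in an essential way --- and rightly so --- but that hypothesis is absent from the statement as printed. The universal property is in fact false for a general cocommutative coalgebra: since $v_{(0)}$ is the only group-like of $\mathcal{B}(U)$, any coalgebra morphism $\Psi:C\rightarrow\mathcal{B}(U)$ sends every group-like of $C$ to $v_{(0)}$, so $\pi\circ\Psi$ annihilates $G(C)$; taking $C=\C[\Z]$ and $\psi$ nonzero on some group-like $t^n$ gives a $\psi$ admitting no lift. The same observation shows that even for connected $C$ one must assume $\psi(e)=0$ for the unique group-like $e$; your construction silently arranges this by setting $\Psi(e)=1$ and feeding only $\psi|_{C^{+}}$ into the series, so your identity $\pi\circ\Psi=\psi$ holds on $C^{+}$ but holds at $e$ only if $\psi(e)=0$. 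This is a defect of the statement rather than of your proof: the version the paper actually invokes later (Theorem \ref{Important} and Corollary \ref{B(g)=U(g)}) is the connected one, which is what you prove.
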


Let $M$ be any  connected coalgebra. Then there exists a unique group-like element $e$ and $M^{(0)}=\C e$.
Then $P(M)$ is well defined as 
 \begin{align}
 P(M)=\{u \in M \mid \Delta(u) = u\otimes e+ e\otimes u\}.
 \end{align}
The following result can be found in \cite{Sw}: 

\begin{theo}\label{Important}
Assume that $V$ is a vector space. Let $M$ be any cocommutative connected coalgebra  and
$\psi: {\mathcal{B}}(V) \rightarrow M$ any coalgebra morphism.
Then $\psi$ is injective (resp. surjective, bijective)
whenever $\psi|_{V}: V \rightarrow P(M)$ is injective (resp. surjective, bijective).
\end{theo}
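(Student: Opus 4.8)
The plan is to run an induction along the coradical filtration, peeling off one filtration layer at a time with the reduced coproduct. First I would record the structural facts that make $\psi|_V\colon V\to P(M)$ meaningful: being a coalgebra morphism, $\psi$ carries the unique group-like $1=v_{(0)}$ of $\mathcal B(V)$ to the unique group-like $e$ of the connected coalgebra $M$, and carries primitives to primitives, so $\psi(V)\subseteq P(M)$ (recall $P(\mathcal B(V))=V$ by Proposition~\ref{shuffle-primitive}). Next I would set up the two ingredients. On $\mathcal B(V)$ the coradical filtration coincides with the grading filtration $B_n=\bigoplus_{i=0}^n\mathcal B(V)_{(i)}$, while on $M$ I use the (exhaustive, since $M$ is connected) coradical filtration $M_0=\C e\subseteq M_1=\C e\oplus P(M)\subseteq\cdots$, which any coalgebra morphism respects, so $\psi(B_n)\subseteq M_n$. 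Writing $M^+=\ker\varepsilon$ and $\bar\Delta(u)=\Delta(u)-u\otimes e-e\otimes u$ for the reduced coproduct, the defining property of the coradical filtration gives $\bar\Delta(M_n^+)\subseteq M_{n-1}^+\otimes M_{n-1}^+$ with kernel exactly $P(M)$; the analogous statement holds on $\mathcal B(V)$ with kernel $V$. Since the bijective assertion is the conjunction of the injective and surjective ones, it suffices to treat those two.

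For injectivity, assume $\psi|_V$ is injective and prove by induction on $n$ that $\psi$ is injective on $B_n$; as $\mathcal B(V)=\bigcup_n B_n$ this gives the claim. Because $\psi(1)=e\neq 0$ and $\psi(B^+)\subseteq M^+$, I may restrict to $B^+$. If $b\in B_n^+$ satisfies $\psi(b)=0$, then $(\psi\otimes\psi)\bar\Delta(b)=\bar\Delta(\psi(b))=0$; but $\bar\Delta(b)\in B_{n-1}^+\otimes B_{n-1}^+$, where $\psi\otimes\psi$ is injective by the induction hypothesis, so $\bar\Delta(b)=0$, i.e.\ $b\in P(\mathcal B(V))=V$, and then injectivity of $\psi|_V$ forces $b=0$. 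This half is soft: it uses only $P(\mathcal B(V))=V$ together with the reduced-coproduct bound, and in fact works for any connected coalgebra as domain.

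For surjectivity, assume $\psi|_V$ maps onto $P(M)$ and prove by induction on $n$ that $M_n\subseteq\psi(\mathcal B(V))$; the base cases $M_0=\C e=\psi(\C 1)$ and $M_1=\C e\oplus P(M)=\psi(\C 1+V)$ are immediate. For the step, take $m\in M_n^+$. Then $\bar\Delta(m)\in M_{n-1}^+\otimes M_{n-1}^+$, which by the induction hypothesis lies in $\psi(B^+)\otimes\psi(B^+)$. The crucial move is to produce $b\in B^+$ with $(\psi\otimes\psi)\bar\Delta(b)=\bar\Delta(m)$: granting this, $\psi(b)-m$ is annihilated by $\bar\Delta$, hence is primitive, hence lies in $\psi(V)$ by surjectivity in degree one, and therefore $m\in\psi(\mathcal B(V))$.

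The hard part is exactly this lifting. The element $\bar\Delta(m)$ is a symmetric (by cocommutativity) and coassociative tensor sitting over $P(M)$, and one must show that such data is always realized as $(\psi\otimes\psi)\bar\Delta(b)$ for a genuine $b\in\mathcal B(V)$; this is where the cofreeness of $\mathcal B(V)$ is indispensable, as it guarantees that $\mathcal B(V)_{(n)}$ is large enough to house a divided-power element with the prescribed reduced coproduct. I would make this precise by passing to associated graded coalgebras, where $\operatorname{gr}\mathcal B(V)=\mathcal B(V)$ and $\operatorname{gr}M$ is coradically graded with degree-one part $P(M)$: applying the universal property in Proposition~\ref{shuffle-primitive} to the degree-one projection $\operatorname{gr}M\to P(M)$ yields a coalgebra map $\Theta\colon\operatorname{gr}M\to\mathcal B(P(M))$, which the injectivity half shows to be injective and which satisfies $\Theta\circ\operatorname{gr}\psi=\mathcal B(\psi|_V)$ by uniqueness. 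Surjectivity of $\psi|_V$ makes the right-hand side surjective, forcing $\Theta$ to be bijective and hence $\operatorname{gr}\psi$, and then $\psi$, surjective (using also that the image is connected, Theorem~\ref{Abe-C2.4.21-T2.4.22}). The one genuinely nontrivial input is thus that cofreeness converts the \emph{linear} surjectivity of $\psi|_V$ into \emph{coalgebra} surjectivity; everything else is bookkeeping with the coradical filtration.
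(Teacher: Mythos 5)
Your argument is correct, but there is nothing in the paper to compare it against: Theorem \ref{Important} is stated with only a citation to Sweedler's book and no proof is given. What you have written is essentially the standard proof from that source, reconstructed in full: the injectivity half by induction along the coradical filtration using $\bar\Delta(M_n^+)\subseteq M_{n-1}^+\otimes M_{n-1}^+$ and $\ker\bar\Delta|_{B^+}=P(\mathcal B(V))=V$, and the surjectivity half via the associated graded coalgebra and the cofreeness of $\mathcal B(P(M))$. You are right to flag that the naive lifting of $\bar\Delta(m)$ is the genuine gap in a first-pass induction, and your replacement of it by the map $\Theta:\operatorname{gr}M\to\mathcal B(P(M))$ does close it, since $\Theta$ is injective by your own (correctly generalized) injectivity half applied to a connected domain, and $\Theta\circ\operatorname{gr}\psi=\mathcal B(\psi|_V)$ by uniqueness in Proposition \ref{shuffle-primitive}. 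The only steps you assert without justification are standard: that $\operatorname{gr}M$ is coradically graded (needed so that $P(\operatorname{gr}M)=M_1/M_0$ and your injectivity argument applies to $\Theta$), that coalgebra morphisms between connected coalgebras respect coradical filtrations, that $M_1=\C e\oplus P(M)$, and that $\mathcal B(f)$ is surjective when $f$ is (one line: a linear section $s$ of $f$ gives $\mathcal B(f)\circ\mathcal B(s)=\mathrm{id}$ by functoriality). With those facts quoted or proved, your write-up is a complete and self-contained proof, which is arguably more than the paper offers.
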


Let  ${\mathfrak{g}}$ be a Lie algebra.  Pick a basis $\{ v_{\lambda}\ |\ \lambda\in \Lambda\}$ for ${\mathfrak{g}}$
with $\Lambda$ totally ordered. Recall that $v_{\lambda}^{(n)}=\frac{1}{n!}v_\lambda^n\in U({\mathfrak{g}})$ for $n\in \N$.
For $f \in {\mathbb N}^{\Lambda}_{\rm fin}$, set 
\begin{align}
v^{(f)}= \prod_{\lambda \in \Lambda}v_{\lambda}^{(f(\lambda))}\in U({\mathfrak{g}}).
\end{align}
 Define a linear map
\begin{align}\label{phi-g}
\Psi_{\mathfrak{g}}:\  {\mathcal{B}}(\mathfrak{g})\rightarrow U({\mathfrak{g}}); \quad 
v_{(f)}\mapsto v^{(f)} \quad \text{ for }f\in \N^{\Lambda}_{\rm fin}.
\end{align}
It follows from the coalgebra structure of $U({\mathfrak{g}})$ and the P-B-W theorem 
that $\Psi_{\mathfrak{g}}$ is a coalgebra isomorphism.
As an immediate consequence of Theorem \ref{Important}, we have:

\begin{coro}\label{B(g)=U(g)}
 Let ${\mathfrak{g}}$ be a Lie algebra and let $M$ be any cocommutative connected coalgebra.
 Suppose $\psi: U({\mathfrak{g}})\rightarrow M$ is a coalgebra morphism. Then $\psi$ is injective (resp. surjective, bijective)
if and only if the restriction $\psi|_{\mathfrak{g}}: {\mathfrak{g}} \rightarrow P(M)$ is injective (resp. surjective, bijective).
\end{coro}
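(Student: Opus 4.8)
The plan is to reduce the statement to Theorem \ref{Important} by transporting it along the coalgebra isomorphism $\Psi_{\g}\colon {\mathcal{B}}(\g)\to U(\g)$ of \eqref{phi-g}. First I would record the key compatibility: since $\Psi_{\g}(v_{(f)})=v^{(f)}$ and $v^{(f)}=v_\lambda$ whenever $f$ is the indicator function of a single $\lambda\in\Lambda$, the restriction of $\Psi_{\g}$ to the degree-one part ${\mathcal{B}}(\g)_{(1)}=\g=P({\mathcal{B}}(\g))$ is exactly the identity map of $\g$ (viewed inside $U(\g)$). Consequently, setting $\phi:=\psi\circ\Psi_{\g}\colon {\mathcal{B}}(\g)\to M$, which is again a coalgebra morphism, one has $\phi|_{\g}=\psi|_{\g}$ as maps $\g\to P(M)$; and because $\Psi_{\g}$ is bijective, $\phi$ and $\psi$ are simultaneously injective, surjective, or bijective.

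With this in place, the ``if'' direction is immediate: if $\psi|_{\g}=\phi|_{\g}$ is injective (resp.\ surjective, bijective), then Theorem \ref{Important}, applied to $\phi$ with $V=\g$, shows that $\phi$, and hence $\psi$, is injective (resp.\ surjective, bijective). Here $\phi|_{\g}$ indeed lands in $P(M)$ because any coalgebra morphism sends $P({\mathcal{B}}(\g))=\g$ into $P(M)$; this uses that $\phi$ carries the unique group-like element $1$ of the connected coalgebra ${\mathcal{B}}(\g)$ to the unique group-like element of the connected coalgebra $M$.

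For the ``only if'' direction I would argue separately. The injective case is trivial, since the restriction of an injective map to a subspace is injective (and $\psi|_{\g}$ is well defined into $P(M)$ by the primitive-preservation just noted). The bijective case then reduces to the injective case together with the surjective case. The surjective case is the substantive point: I must show that a surjective coalgebra morphism $\psi\colon U(\g)\to M$ between cocommutative connected coalgebras restricts to a surjection $\psi|_{\g}\colon \g=P(U(\g))\to P(M)$, where $P(U(\g))=\g$ by Proposition \ref{U(g)}. The plan is to use the coradical filtration, which coalgebra morphisms preserve, together with the identification of the associated graded $\mathrm{gr}\,U(\g)$ with the symmetric algebra $S(\g)$, whose primitives sit in degree one. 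Given $v\in P(M)$ and a preimage $a\in U(\g)$ normalized to lie in the augmentation kernel, primitivity of $v$ forces $(\psi\otimes\psi)\bar\Delta(a)=0$, i.e.\ $\bar\Delta(a)\in\ker\psi\otimes U(\g)+U(\g)\otimes\ker\psi$; comparing leading symbols in $S(\g)$ and using that a symmetric tensor of degree $\geq 2$ has nonzero (indeed faithful) reduced coproduct, one can subtract an element of $\ker\psi$ from $a$ to strictly lower its filtration degree, and by induction reach a preimage lying in $\g$.

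I expect this surjectivity-converse to be the main obstacle. It is precisely the direction that Theorem \ref{Important} does not furnish, and while the injective converse is formal, making the symbol-and-induction step rigorous is where the actual work lies; in effect one is reproving the converse of Theorem \ref{Important} for the coalgebra $U(\g)$ by hand, rather than citing it.
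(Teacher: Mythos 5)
Your reduction is exactly the paper's: the paper defines $\Psi_{\g}$ in \eqref{phi-g}, observes via PBW and the coalgebra structure of $U(\g)$ that it is a coalgebra isomorphism restricting to the identity on $\g={\mathcal{B}}(\g)_{(1)}$, and then declares the corollary ``an immediate consequence of Theorem \ref{Important}.'' So for the ``if'' direction your argument coincides with the intended one, and your side remarks (that a coalgebra morphism between connected coalgebras sends the unique group-like to the unique group-like and hence primitives to primitives) are correct and fill in the unstated details. Where you diverge is in taking the ``only if'' direction seriously: you are right that Theorem \ref{Important} is stated only as a ``whenever,'' so it does not literally furnish the converse, and the paper never proves it --- nor does it ever use it (both invocations, in Theorem \ref{main-result1} and Proposition \ref{classification-BL}, pass from bijectivity or injectivity of the restriction to the same property of the whole map). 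The injectivity converse is trivial as you say; for the surjectivity converse your coradical-filtration plan is the standard route (a surjective coalgebra morphism satisfies $D_n\subseteq\psi(C_n)$ for the coradical filtrations, and $C_1=\C 1\oplus P(C)$ for a connected $C$, which immediately gives $P(M)\subseteq\psi(\g)$), though as you acknowledge you leave the inductive symbol-lowering step at the level of a sketch. In short: same proof as the paper for the direction that matters, plus an honest flag that the stated biconditional needs a small extra (standard) argument the paper glosses over.
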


\section{Vertex bialgebras associated to vertex Lie algebras}

In this section, we recall from \cite{L}  the vertex bialgebra ${\mathcal{V}}_{C}$ associated to a vertex Lie algebra 
(namely, a Lie conformal algebra) $C$ and  show that ${\mathcal{V}}_{C}$  as a coalgebra 
is connected (namely, pointed and irreducible) with $P({\mathcal{V}}_{C})=C$.

We begin by recalling the definition of a vertex algebra (see \cite{Bo}, \cite{FLM}, \cite{FHL}; cf. \cite{LL}). 

\begin{defi}\label{def-VA}
\emph{ A \emph{vertex algebra} is a vector space $V$ equipped with a linear map
	$$
	\begin{aligned}
	Y(\cdot, x):\  & V \rightarrow (\operatorname{End}V)[[x, x^{-1}]] \\
	& v \mapsto Y(v, x)=\sum_{n \in \mathbb{Z}} v_{n} x^{-n-1} \quad\left(\text {where } v_{n} \in \operatorname{End}V\right)
	\end{aligned}
	$$
	and  a distinguished vector ${\bf 1}\in V,$ called the \emph {vacuum vector}, satisfying all the following conditions:
	 The \emph{truncation condition}:
	$$Y(u,x)v\in V((x)) \quad\text{for } u,v\in V.$$
The  \emph{vacuum property}:
	$$
	Y({\bf 1}, x)=1.
	$$
The \emph{creation property}:
	$$
	Y(v, x){\bf 1} \in V[[x]] \  \text { and } \lim _{x \rightarrow 0} Y(v, x){\bf 1}=v \quad \text { for } v \in V
	$$
	 and for $u, v \in V,$
\begin{align}\label{Jacobi identity}
x_{0}^{-1} \delta\left(\frac{x_{1}-x_{2}}{x_{0}}\right) &Y(u, x_{1}) Y(v, x_{2})
-x_{0}^{-1} \delta\left(\frac{x_{2}-x_{1}}{-x_{0}}\right) Y(v, x_{2}) Y(u, x_{1}) \\
&=x_{2}^{-1} \delta\left(\frac{x_{1}-x_{0}}{x_{2}}\right) Y(Y(u, x_{0}) v, x_{2})\nonumber
\end{align}
(the \emph{Jacobi identity}), where 
$$x_{0}^{-1} \delta\left(\frac{x_{1}-x_{2}}{x_{0}}\right)=\sum_{n\in \mathbb{Z}} x_0^{-n-1}(x_1-x_2)^n
=\sum_{n\in \mathbb{Z}}\sum_{i\ge 0}\binom{n}{i}(-1)^i x_0^{-n-1}x_1^{n-i}x_2^i.$$
}
\end{defi}

Let $V$ be a vertex algebra. Denote by $\mathcal{D}$ the linear operator on $V$ defined by ${\mathcal{D}}v=v_{-2}{\bf1}$
for $v\in V$.  Then
\begin{align*}
&[{\mathcal{D}},Y(v,x)]=Y({\mathcal{D}}v,x)=\frac{d}{dx}Y(v,x),\\
\label{sew-sytry}&Y(u,x)v=e^{x{\mathcal{D}}}Y(v,-x)u \quad \text{(the {\em skew-symmetry})}
\end{align*}
 for $u,v\in V$. In terms of components,  we have $({\mathcal{D}}u)_m=-m u_{m-1}$ for $m\in\Z$.

\begin{remark}\label{ex-0-add}\rm
Let $u, v\in V$ (a vertex algebra). We have
	\begin{align*}
	[Y(u,x_1), Y(v,x_2)]&={\rm Res}_{x_0} x_{2}^{-1} \delta\left(\frac{x_{1}-x_{0}}{x_{2}}\right) Y(Y(u, x_{0}) v, x_{2})\\
	&=\sum_{j\ge 0}Y(u_jv,x_2)\frac{1}{j!}\left(\frac{\partial}{\partial x_2}\right)^jx_1^{-1}\delta\left(\frac{x_2}{x_1}\right)
	\end{align*}
	(the {\em Borcherds commutator formula}), or equivalently  in the component form,
	\begin{align*}
	[u_m,v_n]=\sum_{j\in \mathbb N}\binom{m}{j}(u_jv)_{m+n-j}
	\end{align*}	for $m,n\in\Z$.
\end{remark}

\begin{exa}\label{Borch-constru}
{\em Let $A$ be a unital commutative associative algebra with a derivation $\pa$. 
A result due to Borcherds (see \cite{Bo}) is that 
$A$ is a vertex algebra with the identity as the vacuum vector and with the vertex operator map $Y(\cdot,x)$ defined by
$$Y(a,x)b=\left(e^{x\partial}a\right)b=\sum_{n\ge 0}\frac{1}{n!}(\partial^na)bx^{n}\quad \text{ for }a,b\in A.$$ 
This construction  of vertex algebras is known as the Borcherds construction.}
\end{exa}

Let $V$ and $K$ be vertex algebras. A \emph{vertex algebra (homo)morphism} from $V$ to $K$ is
a linear map $\psi$ such that
		$$
		\begin{aligned}
		\psi({\bf1}) &={\bf1}, \\
		\psi (Y(u, x) v) &=Y(\psi(u), x) \psi(v) \quad \text { for } u, v \in V.
		\end{aligned}
		$$

Recall from  \cite{FHL} that for any vertex algebras $U$ and $V$,  $U\otimes V$ is a vertex algebra
with ${\bf 1}\otimes {\bf 1}$ as the vacuum vector and with
\begin{align}
Y(u\otimes v,x)=Y(u,x)\otimes Y(v,x)\quad \text{  for }u\in U,\ v\in V.
\end{align}

Now, we introduce the main objects of this paper, which are a family of nonlocal vertex bialgebras introduced in \cite{L}.

\begin{defi}
{\em A {\it vertex bialgebra} is a  vertex algebra $V$ equipped with a coalgebra structure
such that the coalgebra structure maps $\Delta: V \rightarrow V \otimes V$ and $\varepsilon: V \rightarrow \mathbb{C}$ are  vertex algebra morphisms.}
\end{defi}

A vertex bialgebra is said to be {\it cocommutative} (resp. {\it pointed, irreducible, connected})
if it  is cocommutative (resp. pointed, irreducible, connected) as a coalgebra.
A {\it vertex bialgebra morphism} is  both a vertex algebra morphism and a coalgebra morphism.

Let $(A,\pa)$ be a differential algebra where $A$ is a unital commutative associative algebra 
and  $\pa$ is a  derivation of $A$. 
Then $A\otimes A$ and $\mathbb{C}$ are naturally differential algebras with 
  $\pa_{A\otimes A}=\pa\otimes 1+1\otimes\pa$ and $\pa_\mathbb{C}=0$. 
A \emph{differential algebra morphism} is  an algebra morphism $f: A \rightarrow B$ such that $f\pa_A=\pa_B f$.
A \emph{differential bialgebra} is a bialgebra  $B$ equipped with a derivation $\pa$ of $B$ as an algebra such that
 \begin{align}
 \varepsilon\pa=0\ (=\pa \varepsilon),\quad \Delta\pa
 =(\pa\otimes 1+1\otimes\pa)\Delta,
 \end{align}
 namely, $\Delta$ and $\varepsilon$ are differential algebra morphisms.  
 A \emph{differential bialgebra morphism}   is defined to be a differential algebra morphism which is also a coalgebra morphism.
 
 \begin{remark}\label{vba-dba}
 {\em Recall that every commutative differential algebra $A$ is naturally a vertex algebra 
 by Borcherds' construction (see Example \ref{Borch-constru}).
 On the other hand, as it was shown in \cite{L}, 
 every differential bialgebra is naturally a vertex bialgebra.}
 \end{remark}

Next, we discuss vertex Lie algebras. A notion of vertex Lie algebra was introduced by Primc (see \cite{P}) 
and an equivalent notion, called Lie conformal algebra, was independently introduced by Kac (see  \cite{Kac}).
(Another essentially equivalent notion of vertex Lie algebra was studied in  \cite{DLM}.)
Here, we shall use Primc's definition. 

\begin{defi} \label{def-vla}
{\em A {\em vertex Lie algebra} is a vector space $U$ equipped with a linear operator $\mathcal{D}$ and a linear map
\begin{align*}
Y^{-}(\cdot,x): \ U &\rightarrow x^{-1}({\rm End}\,U)[[x^{-1}]]\nonumber\\
u&\mapsto Y^{-}(u,x)=\sum_{n\in \mathbb N}u_nx^{-n-1}\quad (\text{where }u_n\in {\rm End}\,U),
\end{align*}
satisfying the following conditions for $u,v,w\in U,\ m,n\in \mathbb N$:
\begin{align*}
&u_nv=0\quad\text{ for  $n$ sufficiently large,}\\
&(\mathcal{D} u)_{n}v=-nu_{n-1}v, \quad u_n\mathcal{D} v=\mathcal{D}(u_nv)-nu_{n-1}v,\\
&u_nv=\sum_{j\ge 0}(-1)^{n+j+1}\frac{1}{j!}\mathcal{D}^{j}v_{n+j}u,\\
&u_mv_nw-v_nu_mw=\sum_{j\ge 0}\binom{m}{j}(u_jv)_{m+n-j}w.
\end{align*}}
\end{defi}

Note that every vertex algebra $V$ is naturally a vertex Lie algebra with 
$Y^{-}(u,x)=\sum_{n\ge 0}u_nx^{-n-1}$, the singular part of the vertex operator $Y(u,x)$ for $u\in V$.

Let $U$ and $V$ be two vertex Lie algebras. 
A {\em vertex Lie subalgebra} of $U$ is a subspace $W$ such that $\mathcal D(W)\subset W$ and $u_mv\in W$ 
for any $u,v\in W,\  m\in\mathbb N$.  A {\em vertex Lie algebra morphism}  from $U$ to $V$ 
is a linear map $f$  such that 
\begin{align}
f(Y_U^{-}(u,x)v)=Y_V^{-}(f(u),x)f(v)\  \  \text{ and }\  f(\mathcal D_Uu)=\mathcal D_V(f(u))\quad \text{ for }u, v\in U.
\end{align}

Let $C$ be a vertex Lie algebra. Set $L(C)=C\otimes \C[t,t^{-1}]$, a vector space, and set
$\hat{\mathcal D}=\mathcal D \otimes 1+1 \otimes \frac{d}{dt}$, a linear operator on $L(C)$.
  We have a Lie algebra
  $\mathcal{L}_{C}$  (see \cite{P}), where
 \begin{align}
 \mathcal{L}_{C}=L(C)/\hat{\mathcal D}L(C)
 \end{align}
 as a vector space and the Lie bracket is given by
\begin{align}
[a(m),b(n)]= \sum_{j \in \mathbb N} \dbinom{m}{j}(a_{j}b)(m+n-j)
\end{align}
for $a,b\in C,\ m,n\in \mathbb Z$, where $u(r)$ denotes the image of $u\otimes t^r$  in $ \mathcal{L}_{C}$
for $u\in C,\ r\in\mathbb Z$.

Lie algebra $ \mathcal{L}_{C}$ has a polar decomposition $ \mathcal{L}_{C}= \mathcal{L}_{C}^+ \oplus \mathcal{L}_{C}^-$ 
into Lie subalgebras, where
\begin{align}
\mathcal{L}_{C}^+={\rm span}\{ a(n)\,|\, a \in C,\ n\geq 0\},\quad
\mathcal{L}_{C}^-= {\rm span}\{ a(n)\,|\, a \in C,\  n< 0\}.
\end{align}
View  $C$ as a trivial $\mathcal{L}_{C}^+$-module and then form the induced $\mathcal{L}_{C}$-module:
\begin{equation}\label{Vc}
{\mathcal{V}}_{C}=U\left(\mathcal{L}_{C}\right) \otimes_{U\left(\mathcal{L}_{C}^{+}\right)} \mathbb{C}.
\end{equation}
Set ${\bf 1}=1\otimes 1\in {\mathcal{V}}_{C}$. By the P-B-W theorem, the natural $U(\mathcal{L}_{C}^{-})$-module
homomorphism from $U(\mathcal{L}_{C}^{-})$ to ${\mathcal{V}}_{C}$, sending $X$ to $X{\bf 1}$ for $X\in U(\mathcal{L}_{C}^{-})$, 
is an isomorphism.
Thus $U(\mathcal{L}_{C}^{-})\simeq {\mathcal{V}}_{C}$ as a $U(\mathcal{L}_{C}^{-})$-module.
We view $C$ as a subspace of ${\mathcal{V}}_{C}$ by identifying $a$ with $a(-1){\bf 1}$ for $a\in C$.

The following result can be found in \cite{DLM,P}:

\begin{prop} \label{3.7}
	On the $\mathcal{L}_{{C}}$-module $\mathcal{V}_{C}$,
	there exists a vertex algebra structure which is uniquely determined by the condition that
	${\bf 1}$ is the vacuum vector and
	$$
	Y(u, x)=\sum_{n \in \mathbb{Z}} u(n) x^{-n-1} \quad \text { for } u \in C.
	$$
Furthermore, ${\mathcal{V}}_{{C}}$ is generated by ${C}$.
\end{prop}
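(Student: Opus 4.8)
The plan is to realize the vertex algebra structure on $\mathcal{V}_C$ through the general existence theorem that builds a vertex algebra out of a family of mutually local, creative fields whose modes, applied to a vacuum vector, span the underlying space (see \cite{LL}; cf. \cite{Kac}). Concretely, for each $u\in C$ I would take $Y(u,x)=\sum_{n\in\Z}u(n)x^{-n-1}$ as a field on $\mathcal{V}_C$, together with $Y({\bf 1},x)=1$. The first preparatory step is to produce the translation operator: define $\mathcal{D}$ on $\mathcal{V}_C$ as the derivation of $U(\mathcal{L}_C^-)$ induced by $u(r)\mapsto -r\,u(r-1)$ (for $r<0$), acting on ${\bf 1}$ by $0$; this is forced by the vertex Lie algebra relations $(\mathcal{D}u)_nv=-n u_{n-1}v$ and $u_n\mathcal{D}v=\mathcal{D}(u_nv)-n u_{n-1}v$, which in $\mathcal{L}_C$ read $(\mathcal{D}u)(n)=-n\,u(n-1)$. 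From $[\mathcal{D},u(n)]=-n\,u(n-1)$ one then reads off $[\mathcal{D},Y(u,x)]=\frac{d}{dx}Y(u,x)=Y(\mathcal{D}u,x)$.

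The second and most substantial step is locality. Using the bracket $[u(m),v(n)]=\sum_{j\ge 0}\binom{m}{j}(u_jv)(m+n-j)$ in $\mathcal{L}_C$ together with the truncation axiom $u_jv=0$ for $j$ large, I would show $(x_1-x_2)^N[Y(u,x_1),Y(v,x_2)]=0$ for $N$ sufficiently large (depending on $u,v$). This is the vertex Lie algebra analogue of the Borcherds commutator formula recast in delta-function form, and it is precisely the input the construction theorem requires. I expect this locality estimate to be the main obstacle, since it is where the algebraic relations of $\mathcal{L}_C$ must be converted into the pole-order/locality condition demanded by the existence theorem.

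The remaining hypotheses are routine. The creation property holds because $u(n){\bf 1}=0$ for $n\ge 0$ while $u(-1){\bf 1}=u$, so $Y(u,x){\bf 1}\in\mathcal{V}_C[[x]]$ with constant term $u$. The spanning condition holds because, by the P-B-W theorem and the identification $\mathcal{V}_C\cong U(\mathcal{L}_C^-){\bf 1}$, the vectors $a_1(-n_1-1)\cdots a_k(-n_k-1){\bf 1}$ (with $a_i\in C$, $n_i\ge 0$) span $\mathcal{V}_C$, and these are exactly the images of the negative modes of the fields $Y(a_i,x)$ applied to ${\bf 1}$. Feeding these four facts into the existence theorem yields a vertex algebra structure on $\mathcal{V}_C$ with ${\bf 1}$ as vacuum and $Y(u,x)=\sum_n u(n)x^{-n-1}$ for $u\in C$.

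Finally, the generation and uniqueness assertions follow together. The spanning vectors above all lie in the vertex subalgebra generated by $C$, so $\mathcal{V}_C$ is generated by $C$; and since in any vertex algebra $Y(u_nv,x)$ is determined by $Y(u,x)$ and $Y(v,x)$ through the iterate (associativity) formula, the generating data $\{Y(u,x)\mid u\in C\}$ together with the vacuum determine $Y$ on all of $\mathcal{V}_C$, which gives the asserted uniqueness.
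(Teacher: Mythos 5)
The paper does not actually prove Proposition \ref{3.7}; it quotes the result from \cite{DLM,P}, and your outline is essentially the standard argument given in those references: establish mutual locality of the generating fields from the $\mathcal{L}_C$-commutator relation and the truncation $u_jv=0$ for large $j$, check creativity, the $\mathcal{D}$-bracket property, and the P-B-W spanning condition, then invoke the existence theorem for vertex algebras, with uniqueness and generation by $C$ following from weak associativity as you say. The only hypothesis you leave implicit is that each $Y(u,x)$ is genuinely a field on all of $\mathcal{V}_C$ (i.e.\ $u(n)w=0$ for $n$ sufficiently large, not just for $w={\bf 1}$), which is a routine induction on the P-B-W filtration using the commutator formula; with that noted, your proposal is correct and matches the proof the paper relies on.
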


The following result was obtained in \cite{L}:

\begin{prop}\label{3.8}
Let $C$ be a vertex Lie algebra.  Then there exist  vertex algebra morphisms
 $\Delta: {\mathcal{V}}_{C} \rightarrow {\mathcal{V}}_{C}\otimes {\mathcal{V}}_{C}$ and 
 $\varepsilon: {\mathcal{V}}_{C} \rightarrow \mathbb{C}$, 
 which are uniquely determined by
\begin{align*}
	&\Delta({\bf 1})= {\bf 1}\otimes {\bf 1},  \quad \varepsilon({\bf 1})= 1,\\
	& \Delta(a)= a\otimes {\bf1}+ {\bf1} \otimes a, \quad \varepsilon(a)=0\quad \text{  for }a \in {C}.
\end{align*}
Furthermore, ${\mathcal{V}}_{C}$ equipped with $\Delta$ and $\varepsilon$ is a cocommutative  vertex bialgebra.
\end{prop}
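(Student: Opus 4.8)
The plan is to construct both $\Delta$ and $\varepsilon$ through the universal property of ${\mathcal{V}}_C$ recalled in the introduction: for any vertex algebra $V$ and any vertex Lie algebra homomorphism $\psi: C\to V$, there is a unique vertex algebra homomorphism $\Psi:{\mathcal{V}}_C\to V$ with $\Psi|_C=\psi$. Since $\mathbb{C}$ is a vertex algebra with $Y^-(\cdot,x)=0$ and ${\mathcal{D}}=0$, the zero map $C\to\mathbb{C}$ is trivially a vertex Lie algebra homomorphism, so it extends uniquely to a vertex algebra morphism $\varepsilon:{\mathcal{V}}_C\to\mathbb{C}$; as vertex algebra morphisms preserve the vacuum, $\varepsilon({\bf 1})=1$, while $\varepsilon(a)=0$ for $a\in C$ by construction.

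For $\Delta$, I would consider the linear map $\delta:C\to{\mathcal{V}}_C\otimes{\mathcal{V}}_C$, $a\mapsto a\otimes{\bf 1}+{\bf 1}\otimes a$, and show it is a vertex Lie algebra homomorphism, where ${\mathcal{V}}_C\otimes{\mathcal{V}}_C$ carries the vertex Lie algebra structure coming from the singular parts of its vertex operators. Compatibility with ${\mathcal{D}}$ is immediate from ${\mathcal{D}}{\bf 1}=0$ together with ${\mathcal{D}}_{{\mathcal{V}}_C\otimes{\mathcal{V}}_C}={\mathcal{D}}\otimes 1+1\otimes{\mathcal{D}}$. For the bracket, using $Y(u\otimes v,x)=Y(u,x)\otimes Y(v,x)$ and $Y({\bf 1},x)=1$, one finds $(\delta(a))_n=a_n\otimes 1+1\otimes a_n$ as operators, so that for $n\ge 0$, $(\delta(a))_n\delta(b)=a_nb\otimes{\bf 1}+a_n{\bf 1}\otimes b+b\otimes a_n{\bf 1}+{\bf 1}\otimes a_nb$. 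The creation property gives $a_n{\bf 1}=0$ for $n\ge 0$, and since $a_nb\in C$ for such $n$, this collapses to $\delta(a_nb)$, confirming that $\delta$ respects $Y^-$. The universal property then yields a unique vertex algebra morphism $\Delta:{\mathcal{V}}_C\to{\mathcal{V}}_C\otimes{\mathcal{V}}_C$ with $\Delta|_C=\delta$ and $\Delta({\bf 1})={\bf 1}\otimes{\bf 1}$, which also gives the asserted uniqueness of $\Delta$ and $\varepsilon$.

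To finish I would verify the coalgebra axioms and cocommutativity. The key point is that ${\mathcal{V}}_C$ is generated by $C$ (Proposition \ref{3.7}), so any two vertex algebra morphisms out of ${\mathcal{V}}_C$ that agree on $C$ coincide. Coassociativity, the counit property, and cocommutativity are each an equality of two vertex algebra morphisms on ${\mathcal{V}}_C$ (with targets ${\mathcal{V}}_C\otimes{\mathcal{V}}_C\otimes{\mathcal{V}}_C$, ${\mathcal{V}}_C$, and ${\mathcal{V}}_C\otimes{\mathcal{V}}_C$ respectively), and in each case a one-line check on a generator $a\in C$ produces the symmetric expressions $a\otimes{\bf 1}\otimes{\bf 1}+{\bf 1}\otimes a\otimes{\bf 1}+{\bf 1}\otimes{\bf 1}\otimes a$, then $a$, then $a\otimes{\bf 1}+{\bf 1}\otimes a$ on both sides. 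Agreement on the generating set $C$ forces agreement everywhere, so $({\mathcal{V}}_C,\Delta,\varepsilon)$ is a cocommutative vertex bialgebra.

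The main obstacle, and really the only nonroutine step, is checking that $\delta$ is a vertex Lie algebra homomorphism; everything else reduces via the universal property and the generation of ${\mathcal{V}}_C$ by $C$ to short computations on generators. The crux of that check is the vanishing $a_n{\bf 1}=0$ for $n\ge 0$, which is precisely what makes the primitive assignment $a\mapsto a\otimes{\bf 1}+{\bf 1}\otimes a$ compatible with the singular vertex operators on the tensor product.
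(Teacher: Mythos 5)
Your proposal is correct. The paper does not prove Proposition \ref{3.8} itself but cites \cite{L} for it, and your argument --- extending the vertex Lie algebra maps $a\mapsto 0$ and $a\mapsto a\otimes{\bf 1}+{\bf 1}\otimes a$ via Primc's universal property of ${\mathcal{V}}_C$, with the key check $a_n{\bf 1}=0$ for $n\ge 0$, and then deducing the coalgebra axioms and cocommutativity by comparing vertex algebra morphisms on the generating set $C$ --- is exactly the standard route taken there.
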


Let  $C$ be a vertex Lie algebra as before. It is straightforward to show that the linear map 
${C}\rightarrow \mathcal{L}_{C};\  a\mapsto a(-1)$ is one-to-one.
Then we view ${C}$ as a subspace of $\mathcal{L}_{C}$
by identifying $a$ with  $a(-1)$ for $a\in C$.
Recall 
$\mathcal{L}_\mathcal C^-={\rm span}\{ a(-n-1) \,|\, a\in {C},\ n\in \N\}.$
Noticing  that 
\begin{align}
a(-n-1)=\frac{1}{n!}(\mathcal D^n a)(-1)\quad \text{for }a\in {C},\ n\in \N,
\end{align}
we have 
\begin{align}
\mathcal{L}_{C}^-=\{ u(-1)\ |\ u\in {C}\}
\end{align}
and $\mathcal{L}_{C}^-={C}$ as vector spaces.

\begin{prop}\label{VC-facts}
Let $C$ be any vertex Lie  algebra. 
Then the $U(\mathcal{L}_{C}^-)$-module isomorphism $\eta: U(\mathcal{L}_{C}^-)\rightarrow {\mathcal{V}}_C$,
defined by 
\begin{align}
\eta(X)=X{\bf 1}\quad \text{for }X\in U(\mathcal{L}_{C}^-),
\end{align}
is a coalgebra isomorphism.  Furthermore,  the associated vertex bialgebra ${\mathcal{V}}_C$ is cocommutative and connected
with $P({\mathcal{V}}_{C})={C}$.
\end{prop}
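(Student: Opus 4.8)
The plan is to reduce the whole proposition to a single claim: that $\eta$ is a coalgebra morphism. Note that $\eta$ is already a linear isomorphism by the P-B-W theorem recorded just before the statement, and that $\mathcal{V}_C$ is cocommutative by Proposition \ref{3.8}. So once $\eta$ is known to be a coalgebra isomorphism, I would transport the remaining properties across it: $U(\mathcal{L}_C^-)$ is cocommutative and connected with $P(U(\mathcal{L}_C^-))=\mathcal{L}_C^-$ by Proposition \ref{U(g)}, connectedness is preserved by a coalgebra isomorphism, and the primitive space transports as $P(\mathcal{V}_C)=\eta(P(U(\mathcal{L}_C^-)))=\eta(\mathcal{L}_C^-)$. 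Since $\eta(u(-1))=u(-1){\bf 1}=u$ for $u\in C$, this last set is exactly $C\subseteq\mathcal{V}_C$, which would give $P(\mathcal{V}_C)=C$ and finish the proof.

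The heart of the argument, and what I expect to be the main obstacle, is showing that $\eta$ respects comultiplication, i.e.\ $\Delta\circ\eta=(\eta\otimes\eta)\circ\Delta_U$, where $\Delta_U$ is the standard Hopf comultiplication on $U(\mathcal{L}_C^-)$. The difficulty is that $\eta$ is \emph{not} an algebra morphism (the associative product of $U(\mathcal{L}_C^-)$ is unrelated to the vertex product of $\mathcal{V}_C$), so one cannot simply check the identity on algebra generators. I would instead exploit the left $U(\mathcal{L}_C^-)$-module structure. Writing each $b\in\mathcal{L}_C^-$ as $b=u(-1)$ with $u\in C$, the element $b$ acts on $\mathcal{V}_C$ as the operator $u_{-1}$, so $\eta(bX)=u_{-1}\,\eta(X)$ for all $X$. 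Because $\Delta$ is a vertex algebra morphism and $\Delta(u)=u\otimes{\bf 1}+{\bf 1}\otimes u$ for $u\in C$, extracting the coefficient of $x^{0}$ from the operator identity $\Delta\,Y(u,x)=\left(Y(u,x)\otimes 1+1\otimes Y(u,x)\right)\Delta$ yields
\begin{align}
\Delta\,u_{-1}=(u_{-1}\otimes 1+1\otimes u_{-1})\,\Delta \qquad\text{on }\mathcal{V}_C. \nonumber
\end{align}
Setting $\rho(b)=u_{-1}\otimes 1+1\otimes u_{-1}$, this says $(\Delta\circ\eta)(bX)=\rho(b)\,(\Delta\circ\eta)(X)$.

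On the other side, $\Delta_U(b)=b\otimes 1+1\otimes b$ and $\Delta_U$ is an algebra morphism, so expanding $\Delta_U(X)=\sum X'\otimes X''$ and applying $\eta\otimes\eta$ (again using $\eta(bX')=u_{-1}\eta(X')$) gives $\bigl((\eta\otimes\eta)\circ\Delta_U\bigr)(bX)=\rho(b)\,\bigl((\eta\otimes\eta)\circ\Delta_U\bigr)(X)$ as well. Thus the two maps $\Delta\circ\eta$ and $(\eta\otimes\eta)\circ\Delta_U$ satisfy the same recursion in $b\in\mathcal{L}_C^-$ and agree at $X=1$, where both return ${\bf 1}\otimes{\bf 1}$. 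Since $\mathcal{L}_C^-$ generates $U(\mathcal{L}_C^-)$, every element is a linear combination of monomials $b_1\cdots b_k$ with $b_i\in\mathcal{L}_C^-$, and an induction on $k$ forces the two maps to coincide. I would dispatch the counit identity $\varepsilon\circ\eta=\varepsilon_U$ by the same induction: since $\varepsilon$ is a vertex algebra morphism with $\varepsilon(u)=0$, one gets $\varepsilon(u_{-1}v)=0$, hence $\varepsilon(\eta(bX))=0=\varepsilon_U(bX)$, while $\varepsilon(\eta(1))=\varepsilon({\bf 1})=1$. This establishes that $\eta$ is a coalgebra isomorphism, after which the transport argument of the first paragraph completes the proof.
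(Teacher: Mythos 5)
Your proposal is correct and follows essentially the same route as the paper: the paper's (much terser) proof likewise rests on the facts that $U(\mathcal{L}_C^-)$ is generated as an algebra by $\mathcal{L}_C^-=C$, that $\Delta_{U(\mathcal{L}_C^-)}$ and $\varepsilon_{U(\mathcal{L}_C^-)}$ are algebra morphisms, and that $\Delta_{\mathcal{V}_C}$ and $\varepsilon_{\mathcal{V}_C}$ are vertex algebra morphisms, which is exactly the recursion-plus-induction argument you spell out via the intertwining relation $\Delta\,u_{-1}=(u_{-1}\otimes 1+1\otimes u_{-1})\Delta$. Your transport of connectedness and of $P(\mathcal{V}_C)=C$ across the coalgebra isomorphism matches the paper's remark that the furthermore assertion follows immediately from the first.
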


\begin{proof} The furthermore assertion follows immediately from the first assertion. 
Note that $U(\mathcal{L}_{C}^-)$ as an algebra is generated by $\mathcal{L}_{C}^-=C$. 
Since $\Delta_{U(\mathcal{L}_{C}^-)},$  $\varepsilon_{U(\mathcal{L}_{C}^-)}$ are algebra morphisms, 
and $\Delta_{{\mathcal{V}}_{C}}$,  $\varepsilon_{{\mathcal{V}}_{C}}$ are vertex algebra morphisms,
it follows that $\eta$ is a coalgebra morphism, and hence it is a coalgebra isomorphism.
\end{proof}

\begin{remark}\label{DV}
{\em Let $V$ be a vertex bialgebra. Note that 
\begin{align*}
&\Delta {\mathcal{D}}(v)=({\mathcal{D}}\otimes 1+1\otimes {\mathcal{D}})\Delta(v)
\in {\mathcal{D}}(V)\otimes V+V\otimes {\mathcal{D}}(V),\\
&\varepsilon({\mathcal{D}}v)={\mathcal{D}}\varepsilon(v)=0\quad\text{  for }v\in V.
\end{align*}
Then ${\mathcal{D}}(V)$ is a coideal of $V$, by which we have a factor coalgebra $V/{\mathcal{D}}(V)$. 
On the other hand, it is known (see \cite{Bo}) that $V/{\mathcal{D}}(V)$ is a Lie algebra with
\begin{align*}
[u+{\mathcal{D}}(V), v+{\mathcal{D}}(V)]=u_0v+{\mathcal{D}}(V)\quad\text{  for }u, v\in V.
\end{align*}
Note that there is no canonical Lie algebra structure on $V/{\mathcal{D}}(V)\otimes V/{\mathcal{D}}(V)$.
We know that $(V\otimes V)/({\mathcal{D}}\otimes 1+1\otimes {\mathcal{D}})(V\otimes V)$ is a Lie algebra with
\begin{align}
[\overline{a\otimes b},\overline{u\otimes v}]
=\sum_{m\in \Z}\overline{a_mu\otimes b_{-m-1}v}\quad \text{ for }a,b,u,v\in V.
\end{align}
We have
$$\Delta {\mathcal{D}} (V)=({\mathcal{D}}\otimes 1+1\otimes {\mathcal{D}})\Delta(V)\subset ({\mathcal{D}}\otimes 1+1\otimes {\mathcal{D}})(V\otimes V),$$
but $({\mathcal{D}}\otimes 1+1\otimes {\mathcal{D}})\Delta(V)\ne ({\mathcal{D}}\otimes 1+1\otimes {\mathcal{D}})(V\otimes V)$ 
in general. On the other hand, the Lie algebra bracket on the Lie algebra $V/{\mathcal{D}}(V)$ is not a coalgebra morphism. }
\end{remark}

\begin{remark}\label{zhu-C2}
{\em Recall that a commutative Poisson algebra is a commutative associative algebra $A$
equipped with a Lie algebra structure $\{\cdot,\cdot\}$ such that
$$\{ ab,c\}=a\{b,c\}+\{a,c\}b\quad \text{for }a,b,c\in A.$$
Let $V$ be a vertex algebra. Set 
$$C_2(V)=\text{span}\{ u_{-2}v\ |\ u,v\in V\}.$$ 
From \cite{Zhu1, Zhu}, $V/C_2(V)$ is a commutative Poisson algebra with
$$(u+C_2(V))(v+C_2(V))=u_{-1}v+C_2(V), \ \   \{ u+C_2(V), v+C_2(V)\}=u_{0}v+C_2(V)$$
for $u,v\in V$. }
\end{remark}

\begin{prop}\label{C-2-V-structure}
Let $V$ be a vertex bialgebra. Then $C_2(V)$ is a coideal of $V$ and $V/C_2(V)$ is a bialgebra. 
Furthermore,   $(V/C_2(V))\otimes (V/C_2(V))$ is a  Lie algebra with 
\begin{align}\label{tensor-Lie-algebra}
[\overline{a\otimes b},\overline{u\otimes v}]=\overline{a_{-1}u}\otimes \overline{b_0v}+\overline{a_0u}\otimes \overline{b_{-1}v}
\end{align}
for $a,b,u,v\in V$ and the comultiplication map $\overline{\Delta}$ of $V/C_2(V)$ is a Lie algebra morphism.
\end{prop}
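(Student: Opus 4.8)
The plan is to reduce all four assertions to the commutative Poisson algebra structure on $V/C_2(V)$ recorded in Remark \ref{zhu-C2}, combined with the hypothesis that $\Delta$ and $\varepsilon$ are vertex algebra morphisms. The common computational device is the observation that every mode of index at most $-2$ already lands in $C_2(V)$: from $(\mathcal{D}a)_m=-ma_{m-1}$ one gets $a_{-n}b=\frac{1}{n-1}(\mathcal{D}a)_{1-n}b$ for $n\ge 3$, so an easy induction starting from the defining case $n=2$ yields $a_mb\in C_2(V)$ whenever $m\le -2$. I will use repeatedly that in the tensor product vertex algebra $Y(a\otimes b,x)=Y(a,x)\otimes Y(b,x)$, whence $(a\otimes b)_n=\sum_{i+j=n-1}a_i\otimes b_j$ for $n\in\Z$.

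First I would check that $C_2(V)$ is a coideal. Since $\Delta$ is a vertex algebra morphism, $\Delta(u_{-2}v)=(\Delta u)_{-2}(\Delta v)$; expanding via $(a\otimes b)_{-2}=\sum_{i+j=-3}a_i\otimes b_j$, each summand has $i\le -2$ or $j\le -2$, so by the device above $\Delta(u_{-2}v)\in C_2(V)\otimes V+V\otimes C_2(V)$, while $\varepsilon(u_{-2}v)=\varepsilon(u)_{-2}\varepsilon(v)=0$ because in the vertex algebra $\C$ every mode other than the $(-1)$st acts as zero. The coalgebra structure therefore descends to $V/C_2(V)$, which already carries the commutative associative product $\overline a\cdot\overline b=\overline{a_{-1}b}$. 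To conclude $V/C_2(V)$ is a bialgebra I would verify that $\overline\Delta$ and $\overline\varepsilon$ are algebra morphisms: in $\Delta(a_{-1}b)=(\Delta a)_{-1}(\Delta b)=\sum_{i+j=-2}\sum(a_{(1)})_ib_{(1)}\otimes(a_{(2)})_jb_{(2)}$ only the term $(i,j)=(-1,-1)$ survives modulo the coideal, and it is exactly $\overline\Delta(\overline a)\,\overline\Delta(\overline b)$; the counit is handled by $\varepsilon(a_{-1}b)=\varepsilon(a)\varepsilon(b)$ and the units by $\Delta({\bf 1})={\bf 1}\otimes{\bf 1}$, $\varepsilon({\bf 1})=1$.

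For the remaining two claims I would invoke the standard fact that the tensor product of two commutative Poisson algebras $A,B$ is again a Poisson algebra, with bracket $\{a\otimes b,a'\otimes b'\}=\{a,a'\}\otimes bb'+aa'\otimes\{b,b'\}$. Applying this with $A=B=V/C_2(V)$ and Poisson bracket $\{\overline a,\overline b\}=\overline{a_0b}$ reproduces \eqref{tensor-Lie-algebra} verbatim and, in particular, makes $(V/C_2(V))\otimes(V/C_2(V))$ a Lie algebra. Finally, to show that $\overline\Delta$ is a Lie algebra morphism I would compute $\overline\Delta(\overline{a_0b})=\overline{(\Delta a)_0(\Delta b)}$ and expand $(\Delta a)_0(\Delta b)=\sum_{i+j=-1}\sum(a_{(1)})_ib_{(1)}\otimes(a_{(2)})_jb_{(2)}$; modulo $C_2(V)$ only the cross terms $(i,j)=(-1,0)$ and $(0,-1)$ persist, and their sum is precisely $[\overline\Delta(\overline a),\overline\Delta(\overline b)]$ as prescribed by \eqref{tensor-Lie-algebra}.

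The bilinear bookkeeping and the Jacobi identity are routine, the latter being inherited from the general Poisson tensor product. The one point demanding care, and the main conceptual content, is the systematic use of the mode-truncation device to determine exactly which Sweedler--mode terms vanish in the quotient; this is what singles out $(-1,-1)$ for the product and the two cross terms $(-1,0),(0,-1)$ for the bracket, and hence what forces the precise shape of \eqref{tensor-Lie-algebra} rather than any other combination of modes.
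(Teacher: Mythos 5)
Your proposal is correct and follows essentially the same route as the paper: both arguments rest on the observation (cited from Zhu in the paper, rederived by you via $(\mathcal{D}a)_m=-ma_{m-1}$) that $a_mb\in C_2(V)$ for all $m\le -2$, combined with the expansion $\Delta(a_nb)=\sum (a_{(1)})_i b_{(1)}\otimes (a_{(2)})_{n-i-1}b_{(2)}$ to isolate exactly which mode indices survive in the quotient. The only cosmetic difference is that you obtain the bracket \eqref{tensor-Lie-algebra} from the standard Poisson tensor product of $V/C_2(V)$ with itself, whereas the paper transports it through the identification $C_2(V\otimes V)=V\otimes C_2(V)+C_2(V)\otimes V$; the two constructions give the same formula and the same verification that $\overline{\Delta}$ is a Lie algebra morphism.
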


\begin{proof} For $u,v\in V$, we have
\begin{align*}
\Delta(Y(u,x)v)=\sum Y(u^{(1)},x)v^{(1)}\otimes Y(u^{(2)},x)v^{(2)},
\end{align*}
or in terms of components, 
\begin{align}\label{Delta-u-n-v}
\Delta(u_nv)=\sum_{m\in \Z} u^{(1)}_mv^{(1)}\otimes u^{(2)}_{n-m-1}v^{(2)}\quad \text{ for }n\in \Z. 
\end{align}
Recall from \cite{Zhu1, Zhu} that $a_{-m-2}b\in C_2(V)$ for $a,b\in V,\ m\in \N$. 
Then 
\begin{align*}
\Delta(u_{-2}v)=\sum_{m\in \Z}u^{(1)}_{m}v^{(1)}\otimes u^{(2)}_{-m-3}v^{(2)}\subset V\otimes C_2(V)+C_2(V)\otimes V.
\end{align*}
We also have $\varepsilon(u_{-2}v)=0$ for $u,v\in V$, 
recalling that $\varepsilon(u_mv)=\delta_{m,-1}\varepsilon(u)\varepsilon(v)$ for $m\in \Z$.
Thus $C_2(V)$ is a coideal of $V$. Now, the commutative Poisson algebra $V/C_2(V)$ is also a coalgebra.
It follows from (\ref{Delta-u-n-v}) that $V/C_2(V)$ is a bialgebra. 
On the other hand,  it is straightforward to show that 
$$C_2(V\otimes V)=V\otimes C_2(V)+C_2(V)\otimes V.$$
As $\Delta: V\rightarrow V\otimes V$ is a vertex algebra morphism, the factor comultiplication map
$$\overline{\Delta}: V/C_2(V)\rightarrow (V\otimes V)/C_2(V\otimes V)=V/C_2(V)\otimes V/C_2(V)$$
is a Poisson algebra morphism.
Note that the Lie algebra structure on $(V\otimes V)/C_2(V\otimes V)$ is given by
\begin{align}
[a\otimes b+C_2(V\otimes V),u\otimes v+C_2(V\otimes V)]=a_{-1}u\otimes b_0v+a_0u\otimes b_{-1}v+C_2(V\otimes V)
\end{align}
for $a,b,u,v\in V$. 
Then $V/C_2(V)\otimes V/C_2(V)$ is a  Lie algebra with the Lie bracket defined by (\ref{tensor-Lie-algebra}).
 \end{proof}
 
\begin{remark}\label{C-n-V}
{\em  Let $V$ be a vertex bialgebra. For $n\ge 1$, set  
 $$C_n(V)=\text{span}\{ u_{-n}v\ |\ u,v\in V\}.$$  
 This gives a descending sequence of subspaces with $C_1(V)=V$ such that
 \begin{align}
 \Delta (C_n(V))\subset \sum_{i=1}^n C_i(V)\otimes C_{n+1-i}(V)
 \end{align}
 for $n\ge 1$.}
\end{remark}

\section{Structure of cocommutative vertex bialgebras}
 This section is the core of the whole paper. In this section, 
 first we determine the structures of $G(V)$ and $P(V)$ for a general vertex bialgebra $V$, and then 
 we determine the structure of a general cocommutative vertex bialgebra $V$ such that $G(V)$ is a group 
 which lies in the center of $V$. The main results are Theorem \ref{main-result1}, Corollary \ref{vla-identification}, 
 Theorems \ref{thm-second} and \ref{mixed-vla-group}.  
 
We begin with the structure of the set $G(V)$ of group-like elements.

\begin{prop}\label{group-like-group}
Let $V$ be a vertex bialgebra.
Then
\begin{align}\label{semigp}
u_nv=0,\  \  u_{-1}v\in G(V)\quad \text{for }u,v\in G(V),\ n\ge 0.
\end{align}
Furthermore, $G(V)$ with the operation defined by $gh=g_{-1}h$ for $g,h\in G(V)$
is an abelian semigroup with the vacuum vector ${\bf 1}$ as its identity.
\end{prop}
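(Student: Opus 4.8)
The plan is to isolate the only genuinely nonclassical point, the vanishing $u_nv=0$ for $n\ge 0$, and to deduce everything else from it. The starting observation is that for $u,v\in G(V)$ the series $a(x):=Y(u,x)v=\sum_{n}u_nv\,x^{-n-1}\in V((x))$ is itself ``group-like'': since $\Delta$ is a vertex algebra morphism and $\Delta u=u\otimes u$, $\Delta v=v\otimes v$, one has
$$\Delta(Y(u,x)v)=Y(\Delta u,x)\Delta v=Y(u,x)v\otimes Y(u,x)v,$$
that is $\Delta(a(x))=a(x)\otimes a(x)$ in $(V\otimes V)((x))$; in components this reads $\Delta(u_nv)=\sum_{m}u_mv\otimes u_{n-1-m}v$. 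I would also record the counit side: because $\varepsilon$ is a vertex algebra morphism into the trivial vertex algebra $\C$, $\varepsilon(u_nv)=\delta_{n,-1}\varepsilon(u)\varepsilon(v)=\delta_{n,-1}$, so in particular $u_{-1}v\ne 0$.

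Write $a_n:=u_nv$. By the truncation condition $a_n=0$ for $n$ large, and since $a_{-1}\ne 0$ the integer $N:=\max\{n: a_n\ne 0\}$ exists with $N\ge -1$. The crux, which I expect to be the main obstacle, is to show $N\le -1$, and I would do this by comparing the most singular coefficients on the two sides of $\Delta(a(x))=a(x)\otimes a(x)$, namely the coefficient of $x^{-2N-2}$. On the right it equals $\sum_{m+k=2N}a_m\otimes a_k$, which (as $a_m=0$ for $m>N$) collapses to the single term $a_N\otimes a_N$. On the left it equals $\Delta(a_{2N+1})$. If $N\ge 0$ then $2N+1>N$, so $a_{2N+1}=0$ and the left side vanishes, forcing $a_N\otimes a_N=0$ and hence $a_N=0$, contradicting the choice of $N$. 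Therefore $N=-1$, that is $u_nv=0$ for all $n\ge 0$; equivalently, by the Borcherds commutator formula, $[Y(u,x_1),Y(v,x_2)]=0$.

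With this in hand the remaining assertions are routine. Since now $a(x)\in V[[x]]$, comparing the $x^0$-coefficients of $\Delta(a(x))=a(x)\otimes a(x)$ gives $\Delta(u_{-1}v)=u_{-1}v\otimes u_{-1}v$, while $\varepsilon(u_{-1}v)=1$; hence $u_{-1}v\in G(V)$ and the operation $gh=g_{-1}h$ is well defined and closed. That ${\bf 1}$ is an identity is immediate from $Y({\bf 1},x)=\mathrm{id}$ and the creation property $v_{-1}{\bf 1}=v$, together with ${\bf 1}\in G(V)$. Commutativity $u_{-1}v=v_{-1}u$ follows from the skew-symmetry $Y(u,x)v=e^{x\mathcal{D}}Y(v,-x)u$: as $u_nv=0=v_nu$ for $n\ge 0$, both sides lie in $V[[x]]$, and comparing constant terms leaves $u_{-1}v=v_{-1}u$. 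Finally, associativity $(g_{-1}h)_{-1}k=g_{-1}(h_{-1}k)$ follows from the iterate formula
$$(g_{-1}h)_{-1}k=\sum_{i\ge 0}\bigl(g_{-1-i}h_{-1+i}k+h_{-2-i}g_ik\bigr),$$
in which the entire second sum and every first summand with $i\ge 1$ vanish because $g_ik=0$ for $i\ge 0$ and $h_{i-1}k=0$ for $i\ge 1$, leaving only the term $g_{-1}(h_{-1}k)$; alternatively, the vanishing of all these singular products makes the vertex subalgebra generated by $G(V)$ commutative in Borcherds' sense, so that $g_{-1}h$ is a commutative, associative product. This exhibits $G(V)$ as an abelian semigroup with identity ${\bf 1}$.
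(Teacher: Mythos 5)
Your proof is correct and follows essentially the same route as the paper: the key nonclassical step, deducing $u_nv=0$ for $n\ge 0$ by extracting the coefficient of $x^{-2N-2}$ from $\Delta(Y(u,x)v)=Y(u,x)v\otimes Y(u,x)v$ and observing that $\Delta(u_{2N+1}v)=u_Nv\otimes u_Nv$ forces a contradiction, is exactly the paper's argument. The remaining verifications (group-likeness of $u_{-1}v$, commutativity via skew-symmetry, associativity via the iterate formula) are only cosmetic variants of the paper's use of the Jacobi identity for the triple $(u,v,w)$, and all your coefficient computations check out.
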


\begin{proof} Let $u,v\in G(V)$.  We now prove $u_nv=0$ for all $n\ge 0$. Suppose that $u_nv\ne 0$ for some nonnegative integer $n$.
Then there exists a nonnegative integer $k$ such that $u_kv\ne 0$ and $u_mv=0$ for $m>k$.
As $\Delta$ is a vertex algebra homomorphism, we have
\begin{align*}
0&=\Delta(u_{2k+1}v)=\Delta(u)_{2k+1}\Delta(v)={\rm Res}_x x^{2k+1}Y(u\otimes u,x)(v\otimes v)\\
&={\rm Res}_x x^{2k+1}(Y(u,x)v\otimes Y(u,x)v)
=\sum_{m\in \mathbb Z}u_mv\otimes u_{2k-m}v=u_kv\otimes u_kv,
\end{align*}
which contradicts that $u_kv\ne 0$. Thus $u_nv=0$ for all $n\ge 0$. Using this property, we get
\begin{align*}
\Delta(u_{-1}v)&=\Delta(u)_{-1}\Delta(v)={\rm Res}_x x^{-1}(Y(u,x)v\otimes Y(u,x)v)\\
&=\sum_{m\in \mathbb Z}u_mv\otimes u_{-m-2}v=u_{-1}v\otimes u_{-1}v.
\end{align*}
We also have $\varepsilon(u_{-1}v)=\varepsilon(u)_{-1}\varepsilon(v)=1$.
 Therefore, $u_{-1}v\in G(V)$.

Now, let $u,v,w\in G(V)$. We have $u_nv=0=u_nw$ for $n\ge 0$. With $V$ a vertex algebra, from the Jacobi identity
for the triple $(u,v,w)$ we get
\begin{align*}
Y(u,x_1)Y(v,x_2)w&=Y(v,x_2)Y(u,x_1)w,\\
Y(u,x_0+x_2)Y(v,x_2)w&=Y(Y(u,x_0)v,x_2)w,
\end{align*}
which give $u_{-1}v_{-1}w=v_{-1}u_{-1}w$ and $u_{-1}v_{-1}w=(u_{-1}v)_{-1}w$.
It then follows that $G(V)$ is an abelian semigroup with ${\bf 1}$ the identity.
\end{proof}

\begin{remark}
\rm Assume that $V$ is a nonlocal vertex bialgebra. From the same argument we see that  
$G(V)$ is a (not necessarily abelian) semigroup with identity ${\bf 1}$.
\end{remark}

\begin{remark}\label{G(V)-indep}
{\em  Note that for any coalgebra $C$, $G(C)$ is a linearly independent subset.
Then for any vertex bialgebra $V$, $G(V)$ is a linearly independent subset of $V$, so that $\C[G(V)]$ is naturally a subcoalgebra of $V$.}
\end{remark}

\begin{remark}\label{vertex-hopf-algebra}
{\em Recall that a cocommutative (pointed) bialgebra $B$ (over $\C$) is a Hopf algebra if and only if $G(B)$  is a group. 
In view of this, one can define a vertex Hopf algebra as a vertex bialgebra $V$ such that $G(V)$ is a group.}
\end{remark}

\begin{lemm}\label{subalgebra-G(V)}
Let $V$ be a vertex bialgebra.  Denote by $\langle G(V)\rangle$ the vertex subalgebra of $V$, generated by $G(V)$. Then 
$\langle G(V)\rangle$ is a commutative and cocommutative vertex bialgebra. 
\end{lemm}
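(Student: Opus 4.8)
The plan is to prove three properties of $\langle G(V)\rangle$ — that it is a subbialgebra (so that $\Delta,\varepsilon$ restrict to it), that it is commutative as a vertex algebra, and that it is cocommutative as a coalgebra. In each case I would exhibit $\langle G(V)\rangle$ as contained in a suitable \emph{vertex subalgebra} of $V$ (or $V\otimes V$) that already contains the generating set $G(V)$; since $G(V)$ generates $\langle G(V)\rangle$, containment of the generators forces containment of the whole subalgebra.

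For the subbialgebra claim, I would set $F=\Delta^{-1}\big(\langle G(V)\rangle\otimes\langle G(V)\rangle\big)$. Since $\langle G(V)\rangle\otimes\langle G(V)\rangle$ is a vertex subalgebra of $V\otimes V$ and $\Delta$ is a vertex algebra morphism, its preimage $F$ is a vertex subalgebra of $V$; as $\Delta(g)=g\otimes g$ for $g\in G(V)$, it contains $G(V)$, hence $\langle G(V)\rangle\subseteq F$, i.e. $\Delta(\langle G(V)\rangle)\subseteq \langle G(V)\rangle\otimes\langle G(V)\rangle$. Together with the fact that $\varepsilon$ restricts to a map into $\C$, this makes $\langle G(V)\rangle$ a vertex subbialgebra.

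For cocommutativity, note that the flip $\tau\colon V\otimes V\to V\otimes V$, $\tau(a\otimes b)=b\otimes a$, is a vertex algebra automorphism, so $\Delta$ and $\tau\Delta$ are both vertex algebra morphisms $V\to V\otimes V$. Their equalizer $E=\{v\in V\mid \Delta(v)=\tau\Delta(v)\}$ is a vertex subalgebra: for $u,v\in E$ one checks $\Delta(u_nv)=\Delta(u)_n\Delta(v)=(\tau\Delta(u))_n(\tau\Delta(v))=\tau(\Delta(u)_n\Delta(v))=\tau\Delta(u_nv)$, while ${\bf 1}\in E$ and closure under $\mathcal{D}$ are immediate from Remark \ref{DV}. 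Since $\Delta(g)=g\otimes g$ is fixed by $\tau$, we get $G(V)\subseteq E$, whence $\langle G(V)\rangle\subseteq E$ is cocommutative.

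The main work, and the step I expect to be the real obstacle, is commutativity. By the Borcherds commutator formula, Proposition \ref{group-like-group} says precisely that the elements of $G(V)$ pairwise commute, i.e. $[Y(g,x_1),Y(h,x_2)]=0$ for $g,h\in G(V)$. I would introduce the commutant $C_V(S)=\{v\in V\mid a_nv=0\ \text{for all}\ a\in S,\ n\ge 0\}$ and recall that for any subset $S$ it is a vertex subalgebra: closure under products follows from expressing $Y(a_mb,x_2)$ through the iterate formula in terms of $Y(a,x_1)$ and $Y(b,x_2)$, with which $Y(v,x)$ commutes whenever $v$ commutes with both $a$ and $b$ — and, crucially, this does \emph{not} require $a$ and $b$ to commute with each other. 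Using skew-symmetry one then sees $C_V(S)=C_V(\langle S\rangle)$. Applying this with $S=G(V)$, Proposition \ref{group-like-group} gives $G(V)\subseteq C_V(G(V))$, so $\langle G(V)\rangle\subseteq C_V(G(V))=C_V(\langle G(V)\rangle)$, which is exactly the statement that every pair of elements of $\langle G(V)\rangle$ commutes; hence $\langle G(V)\rangle$ is a commutative vertex algebra. The delicate point is precisely this propagation from pairwise-commuting generators to a commutative generated subalgebra, for which Dong's lemma (equivalently the iterate-formula/commutant argument) together with skew-symmetry is the right tool.
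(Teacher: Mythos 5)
Your proposal is correct and follows essentially the same route as the paper: the paper likewise deduces commutativity from $[Y(g,x_1),Y(h,x_2)]=0$ for $g,h\in G(V)$ (established in Proposition \ref{group-like-group}), and proves the subcoalgebra and cocommutativity claims by showing that $K=\{v\mid \Delta(v)\in\langle G(V)\rangle\otimes\langle G(V)\rangle\}$ and the equalizer $E=\{v\mid\Delta(v)=T\Delta(v)\}$ are vertex subalgebras containing $G(V)$. The only difference is that you spell out the commutant/Dong's-lemma argument for propagating pairwise commutativity of generators to the generated subalgebra, which the paper leaves implicit.
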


\begin{proof} From the proof of Proposition \ref{group-like-group},
we have $[Y(g,x_1),Y(h,x_2)]=0$ for $g,h\in G(V)$. Then it follows that $\langle G(V)\rangle$ is a commutative vertex subalgebra.
Set
$$K=\{ v\in V\ |\  \Delta(v)\subset \langle G(V)\rangle\otimes \langle G(V)\rangle\}.$$
It is clear that $G(V)\subset K$, in particular ${\bf 1}\in K$. Let $u,v\in K$. Then
$$\Delta(u)=\sum u^{(1)}\otimes u^{(2)},\  \  \Delta(v)=\sum v^{(1)}\otimes v^{(2)}\in \langle G(V)\rangle\otimes \langle G(V)\rangle, $$
from which we get
$$\Delta(Y(u,x)v)=\sum Y(u^{(1)},x)v^{(1)}\otimes Y(u^{(2)},x)v^{(2)}\in  (\langle G(V)\rangle\otimes \langle G(V)\rangle)((x)).$$
Thus $K$ is a vertex subalgebra. Consequently, $\langle G(V)\rangle\subset K$. Therefore,
$\langle G(V)\rangle$ is a subcoalgebra. To show that it is cocommutative, set
$$E=\{ v\in V\ |\ \Delta(v)=T\Delta(v)\},$$
where $T$ denotes the flip operator on $V\otimes V$. It is straightforward to show that $E$ is a vertex subalgebra of $V$, containing $G(V)$.
Consequently, we have $\langle G(V)\rangle\subset E$, proving that $\langle G(V)\rangle$ is cocommutative.
 \end{proof}

\begin{remark}
{\rm Note that the vertex subbialgebra $\langle G(V)\rangle$  is isomorphic to the vertex bialgebra obtained from 
a cocommutative (and commutative) differential bialgebra through Borcherds' construction.}
\end{remark}

\begin{remark}\label{LA-on-va}
{\em Let $C$ be a vertex Lie algebra. Recall that the correspondence 
$C\ni a\mapsto a(-1)\in {\mathcal{L}}_C$ is a linear bijection between $C$ and the Lie subalgebra ${\mathcal{L}}_C^{-}$.
This gives rise to a Lie algebra structure on $C$. Note that
$$[a(-1),b(-1)]=\sum_{i\ge 0}\binom{-1}{i} (a_ib)(-2-i)=\sum_{i\ge 0}(-1)^i\frac{1}{(i+1)!}\left({\mathcal{D}}^{i+1}a_ib\right)(-1)$$
in ${\mathcal{L}}_C^{-}$ for $a,b\in C$. Then
\begin{align}
[a,b]=\sum_{i\ge 0}(-1)^i\frac{1}{(i+1)!}{\mathcal{D}}^{i+1}a_ib\quad \text{ in }C.
\end{align}}
 \end{remark}

Recall that every vertex algebra is naturally a vertex Lie algebra and 
that for a vertex bialgebra $V$, $P(V)=\{ v\in V\ |\  \Delta{v}=v\otimes {\bf 1}+{\bf 1}\otimes v\}$.
Also recall that for a vertex algebra $V$, ${\mathcal{D}}$ is the linear operator on $V$, 
defined by 
$${\mathcal{D}}(v)=v_{-2}{\bf 1}=\text{Res}_x x^{-2}Y(v,x){\bf 1}\quad \text{  for }v\in V.$$
 Note that for $v\in V$, ${\mathcal{D}}v=0$ if and only if $v_{n}=0$ for all $n\ne -1$.

\begin{prop}\label{va-P(V)}
 Let $V$ be a vertex bialgebra. Then $P(V)$ is a vertex Lie subalgebra of $V$ with $\varepsilon(v)=0$ for $v\in P(V)$, 
 and $P(V)$ is a $\C[{\mathcal{D}}]$-submodule. 
 Furthermore, $V$ is an ${\mathcal{L}}_{P(V)}$-module with $a(n)$ acting as $a_n$ for $a\in P(V),\ n\in \Z$.
\end{prop}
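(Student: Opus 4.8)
The plan is to establish the four assertions in turn, relying on the formula (\ref{Delta-u-n-v}) for $\Delta(u_nv)$, the Borcherds commutator formula of Remark \ref{ex-0-add}, and the elementary properties of the operator $\mathcal{D}$. First I would note that $P(V)$ is a subspace (immediate from the linearity of $\Delta$) and dispatch the counit claim: for $v\in P(V)$, applying the counit axiom $\sum\varepsilon(v_{(1)})v_{(2)}=v$ to $\Delta(v)=v\otimes{\bf1}+{\bf1}\otimes v$ gives $\varepsilon(v){\bf1}+\varepsilon({\bf1})v=v$; since $\varepsilon({\bf1})=1$ (as $\varepsilon$ is a vertex algebra morphism), this forces $\varepsilon(v){\bf1}=0$, whence $\varepsilon(v)=0$.

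Next, to show that $P(V)$ is a vertex Lie subalgebra and a $\C[\mathcal{D}]$-submodule, I would verify closure under $\mathcal{D}$ and under the products $u_nv$ for $n\ge0$. For closure under $\mathcal{D}$, I would use $\Delta\mathcal{D}=(\mathcal{D}\otimes1+1\otimes\mathcal{D})\Delta$ (Remark \ref{DV}) together with $\mathcal{D}{\bf1}=0$: for $v\in P(V)$ this yields $\Delta(\mathcal{D}v)=\mathcal{D}v\otimes{\bf1}+{\bf1}\otimes\mathcal{D}v$, so $\mathcal{D}v\in P(V)$. For the products, I would specialize (\ref{Delta-u-n-v}), or directly expand $Y(\Delta(u),x)\Delta(v)$ using $Y({\bf1},x)=1$, to obtain for $u,v\in P(V)$ the identity $\Delta(u_nv)=u_nv\otimes{\bf1}+(u_n{\bf1})\otimes v+v\otimes(u_n{\bf1})+{\bf1}\otimes u_nv$. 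Since the creation property gives $u_n{\bf1}=0$ for $n\ge0$, this collapses to $\Delta(u_nv)=u_nv\otimes{\bf1}+{\bf1}\otimes u_nv$, so $u_nv\in P(V)$. Together with $\mathcal{D}(P(V))\subseteq P(V)$, this shows that $P(V)$ is a vertex Lie subalgebra, and in particular a $\C[\mathcal{D}]$-submodule.

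Finally, for the module structure I would check that the assignment $a(n)\mapsto a_n$ defines a Lie algebra homomorphism from $\mathcal{L}_{P(V)}$ to $\mathrm{End}(V)$ with the commutator bracket. Well-definedness amounts to seeing that the relation imposed by $\hat{\mathcal{D}}L(P(V))$ is respected; in $\mathcal{L}_{P(V)}$ this relation reads $(\mathcal{D}a)(n)=-n\,a(n-1)$, and its operator counterpart $(\mathcal{D}a)_n=-n\,a_{n-1}$ on $V$ is precisely the identity $(\mathcal{D}u)_m=-m\,u_{m-1}$ recorded after Definition \ref{def-VA}. That the bracket is preserved, namely $[a_m,b_n]=\sum_{j\ge0}\binom{m}{j}(a_jb)_{m+n-j}$ as operators on $V$, is exactly the Borcherds commutator formula of Remark \ref{ex-0-add}, valid for all elements of $V$ and hence for $a,b\in P(V)$. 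Here the closure $a_jb\in P(V)$ established in the previous paragraph is what guarantees that the right-hand side matches the defining Lie bracket of $\mathcal{L}_{P(V)}$, which is computed using the vertex Lie algebra products inside $P(V)$.

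The computations are all routine; the only point requiring genuine care is this last step, where one must confirm that the structure constants of $\mathcal{L}_{P(V)}$ agree with the commutators of the operators $a_n$ on $V$. This is not really an obstacle so much as a bookkeeping check, and it succeeds precisely because $P(V)$ inherits its vertex Lie algebra structure from $V$ and the commutator formula holds throughout $V$.
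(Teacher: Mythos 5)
Your proposal is correct and follows essentially the same route as the paper: the counit claim via the counit axiom applied to $\Delta(v)=v\otimes{\bf 1}+{\bf 1}\otimes v$, closure under $\mathcal{D}$ and under the products $u_nv$ ($n\ge 0$) by expanding $Y(\Delta(u),x)\Delta(v)$ with $u_n{\bf 1}=0$, and the $\mathcal{L}_{P(V)}$-module structure from the Borcherds commutator formula together with $(\mathcal{D}u)_m=-mu_{m-1}$. The only cosmetic difference is that you invoke the coideal identity $\Delta\mathcal{D}=(\mathcal{D}\otimes 1+1\otimes\mathcal{D})\Delta$ for the $\mathcal{D}$-closure where the paper computes $\Delta(a_{-2}{\bf 1})$ directly, and you spell out the well-definedness check for the module action that the paper leaves implicit.
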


\begin{proof}  For $a\in P(V)$, with $\Delta({\bf 1})={\bf 1}\otimes {\bf 1}$ we have
  \begin{eqnarray*}
  \Delta(\mathcal D a)\!\!\!&=&\!\!\!\Delta(a_{-2}{\bf1})={\rm Res}_x x^{-2} \Delta (Y(a,x){\bf 1})={\rm Res}_x x^{-2}Y(\Delta(a),x)\Delta({\bf1})\\
\!\!\!&=&\!\!\!{\rm Res}_x x^{-2} \big(Y(a,x){\bf1}\otimes {\bf 1}+  {\bf 1}\otimes Y(a,x){\bf1}\big)
=\mathcal Da\otimes {\bf 1}+{\bf 1}\otimes \mathcal Da,
\end{eqnarray*}
which implies $\mathcal D a\in P(V)$. Thus, $\mathcal D(P(V))\subset P(V)$.
Now, let $a,b \in P(V)$. Then
	\begin{align*}
	\Delta(Y(a,x)b)&=Y(\Delta(a),x)\Delta(b)\\
	&=\big( Y(a,x)\otimes 1+ 1\otimes Y(a,x)\big)(b\otimes {\bf 1} +{\bf 1}\otimes b)\\
	&=Y(a,x)b\otimes{\bf 1}+b\otimes Y(a,x){\bf 1}+Y(a,x){\bf 1}\otimes b+{\bf 1}\otimes Y(a,x)b.
	\end{align*}
For $m\in \mathbb{N}$, as $a_m{\bf 1}=0$ we get
$$\Delta(a_mb)=a_mb\otimes {\bf 1}+{\bf 1}\otimes a_mb,$$
so $a_mb\in P(V)$. Therefore, $P(V)$ is a vertex Lie subalgebra of $V$.
On the other hand,  for $v \in P(V)$ we have
$$v= (1\otimes \varepsilon)\Delta(v)=(1\otimes \varepsilon)(v\otimes {\bf 1}+ {\bf 1} \otimes v)
	= \varepsilon ({\bf 1})v+ \varepsilon(v){\bf 1}=v+ \varepsilon(v){\bf 1},$$
which implies $\varepsilon(v)=0$. The furthermore assertion follows immediately from the construction of Lie algebra ${\mathcal{L}}_{P(V)}$.
\end{proof}

\begin{lemm}\label{va-coalgebra-morphism}
Let $V$ be a vertex bialgebra. Define  a linear map 
\begin{align}
\phi:\  U({\mathcal {L}}_{P(V)})\otimes V\rightarrow V;\quad 
X\otimes v\mapsto \phi(X\otimes v)=Xv
\end{align}
for $X\in U({\mathcal {L}}_{P(V)}),\ v\in V$. 
Then  $\phi$ is a coalgebra morphism.
\end{lemm}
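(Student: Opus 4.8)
The plan is to identify the assertion ``$\phi$ is a coalgebra morphism'' with the statement that $V$ is a \emph{module coalgebra} over the bialgebra $U:=U({\mathcal{L}}_{P(V)})$. Write $\Delta_U,\varepsilon_U$ for the coalgebra structure maps of $U$ (the standard Hopf structure of a universal enveloping algebra, under which every element of ${\mathcal{L}}_{P(V)}$ is primitive), and $\Delta,\varepsilon$ for those of $V$. Equip $U\otimes V$ with the tensor-product coalgebra structure, whose comultiplication is $(1\otimes T\otimes 1)(\Delta_U\otimes \Delta)$ with $T$ the flip operator, and whose counit is $\varepsilon_U\otimes \varepsilon$. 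Unwinding the flip, I would first check that $\phi$ being a coalgebra morphism is \emph{equivalent} to the two identities
$$\Delta(Xv)=\Delta_U(X)\cdot \Delta(v),\qquad \varepsilon(Xv)=\varepsilon_U(X)\varepsilon(v)$$
holding for all $X\in U$ and $v\in V$, where $\Delta_U(X)\in U\otimes U$ acts on $V\otimes V$ factorwise. Indeed, evaluating $(\phi\otimes \phi)(1\otimes T\otimes 1)(\Delta_U\otimes \Delta)$ on $X\otimes v$ produces exactly $\sum (X_{(1)}v_{(1)})\otimes (X_{(2)}v_{(2)})=\Delta_U(X)\cdot \Delta(v)$, matching the first coalgebra-morphism axiom, while the second axiom unwinds to $\varepsilon(Xv)=\varepsilon_U(X)\varepsilon(v)$.

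Next I would reduce these two identities to algebra generators of $U$. The set of $X\in U$ for which the first identity holds for all $v$ is a subalgebra: if $X,Y$ both satisfy it, then $\Delta(XYv)=\Delta_U(X)\Delta(Yv)=\Delta_U(X)\Delta_U(Y)\Delta(v)=\Delta_U(XY)\Delta(v)$, using that $\Delta_U$ is an algebra morphism and that $V$ is a $U$-module (Proposition \ref{va-P(V)}); and $1$ trivially works. The same argument, with $\varepsilon_U$ an algebra morphism, shows that the $X$ satisfying the counit identity also form a subalgebra. Since $U$ is generated as an algebra by ${\mathcal{L}}_{P(V)}=\{a(n)\mid a\in P(V),\ n\in \Z\}$, it suffices to verify both identities when $X=a(n)$.

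Finally I would check the generators, which is where the hypothesis $a\in P(V)$ enters decisively. Each $a(n)$ is primitive in $U$, so $\Delta_U(a(n))=a(n)\otimes 1+1\otimes a(n)$ and $\varepsilon_U(a(n))=0$. Because $a\in P(V)$ we have $\Delta(a)=a\otimes {\bf 1}+{\bf 1}\otimes a$, and since $\Delta$ is a vertex algebra morphism, $\Delta(Y(a,x)v)=(Y(a,x)\otimes 1+1\otimes Y(a,x))\Delta(v)$; extracting the coefficient of $x^{-n-1}$ gives $\Delta(a_nv)=\sum (a_nv_{(1)})\otimes v_{(2)}+\sum v_{(1)}\otimes (a_nv_{(2)})$, which is precisely $\Delta_U(a(n))\cdot \Delta(v)$ once we recall that $a(n)$ acts as $a_n$. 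For the counit, $\varepsilon(a)=0$ (Proposition \ref{va-P(V)}) together with $\varepsilon$ being a vertex algebra morphism gives $\varepsilon(Y(a,x)v)=Y(\varepsilon(a),x)\varepsilon(v)=0$, whence $\varepsilon(a_nv)=0=\varepsilon_U(a(n))\varepsilon(v)$. This completes the verification on generators and hence proves the lemma. There is no genuine obstacle here; the only real subtlety is the bookkeeping of the flip $T$ so that the module-coalgebra identities line up exactly with the coalgebra-morphism axioms, after which the subalgebra/generator reduction and the primitive-element computation $\Delta(a_nv)=\Delta_U(a(n))\cdot \Delta(v)$ are routine.
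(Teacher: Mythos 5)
Your proposal is correct and follows essentially the same route as the paper: both reduce the coalgebra-morphism conditions (via the flip $T^{23}$) to the module-coalgebra identities $\Delta(Xv)=\Delta_U(X)\cdot\Delta(v)$ and $\varepsilon(Xv)=\varepsilon_U(X)\varepsilon(v)$, then prove them by induction over the generators $a(n)$ of $U({\mathcal{L}}_{P(V)})$, using that $a\in P(V)$ is primitive and that $\Delta$, $\varepsilon$ are vertex algebra morphisms. The only cosmetic difference is that you verify closure of the good set under products of two elements, while the paper verifies closure under left multiplication by a generator; these are interchangeable.
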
 

\begin{proof} For clarity, let $\Delta^U$, $\Delta^V$, and $\Delta^t$ denote the comultiplication maps of 
$U({\mathcal {L}}_{P(V)})$, $V$, and $U({\mathcal {L}}_{P(V)})\otimes V$, respectively.
Set
$$E=\{ X\in U({\mathcal {L}}_{P(V)})\ |\  \Delta^V(\phi(X\otimes v))=(\phi\otimes \phi)\Delta^t(X\otimes v)\  \  \text{for all }v\in V\}.$$
Note that  $\Delta^t=T^{23}(\Delta^U\otimes \Delta^V)$,
where $T^{23}$ denotes the flip operator with respect to the indicated factors on the tensor product space (with four factors).
It is clear that $1\in E$.
Let $a\in P(V),\ n\in \Z,\ X\in E\subset U({\mathcal {L}}_{P(V)})$. For any $v\in V$, we have
\begin{align*}
&\Delta^V(\phi(a_nX\otimes v))=\Delta^V(a_nXv)=\Delta^V(a)_n\Delta^V(Xv)=(a_n\otimes 1+1\otimes a_n)\Delta^V(Xv)\nonumber\\
&\quad =(a_n\otimes 1+1\otimes a_n)(\phi\otimes \phi)T^{23}(\Delta^U(X)\otimes \Delta^V(v))\nonumber\\
&\quad =(\phi\otimes \phi)T^{23}(\Delta^U(a_n)\Delta^U(X)\otimes \Delta^V(v))\nonumber\\
&\quad =(\phi\otimes \phi)T^{23}(\Delta^U(a_nX)\otimes \Delta^V(v))\\
&\quad =(\phi\otimes \phi)\Delta^t(a_nX\otimes v).
\end{align*}
Thus $a_nX\in E$. It follows that $E=U({\mathcal {L}}_{P(V)})$, so that $\Delta^V \phi=(\phi\otimes \phi)\Delta^t$. 
On the other hand, we have $\varepsilon_V (\phi(1\otimes v))=\varepsilon_V (v)=\varepsilon_t (1\otimes v)$ for $v\in V$, and 
$$\varepsilon_V (\phi(a_nX\otimes v))=\varepsilon_V (a_nXv)=\delta_{n,-1}\varepsilon_V(a)\varepsilon_V(Xv)=0,$$
$$
\varepsilon_t(a_nX\otimes v)=\varepsilon_U(a_nX)\varepsilon_V(v)=\varepsilon_U(a_n)\varepsilon_U(X)\varepsilon_V(v)=0$$
for any $a\in P(V),\ n\in \Z,\ X\in U({\mathcal {L}}_{P(V)}),\ v\in V$, noticing that $\varepsilon_V(a)=0=\varepsilon_U(a_n)$. 
It then follows that $\varepsilon_V \circ \phi=\varepsilon_t$.
Therefore, $\phi$ is a coalgebra morphism.
\end{proof}

Similarly, we have the following straightforward result:

\begin{lemm}\label{D-coalgebra-morphism}
Let $V$ be a vertex bialgebra. Define  a linear map 
\begin{align}
\phi_{\mathcal{D}}:\  \C[{\mathcal {D}}]\otimes V\rightarrow V;\quad 
{\mathcal{D}}^n\otimes v\mapsto {\mathcal{D}}^nv
\end{align}
for $n\in \N,\ v\in V$. View $\C[{\mathcal {D}}]$ as a coalgebra by identifying $\C[\mathcal{D}]$ 
as the universal enveloping algebra of the $1$-dimensional Lie algebra $\C {\mathcal{D}}$.
Then  $\phi_{\mathcal{D}}$ is a coalgebra morphism.
\end{lemm}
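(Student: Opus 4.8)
The plan is to reproduce the induction scheme of Lemma~\ref{va-coalgebra-morphism}, with the role of the primitive generators $a_n\in U(\mathcal{L}_{P(V)})$ now played by the single primitive element $\mathcal{D}\in \C[\mathcal{D}]$. Viewing $\C[\mathcal{D}]$ as $U(\C\mathcal{D})$, the operator $\mathcal{D}$ is primitive, so $\Delta^U(\mathcal{D})=\mathcal{D}\otimes 1+1\otimes \mathcal{D}$ and $\varepsilon_U(\mathcal{D})=0$, where $\Delta^U,\varepsilon_U$ denote the coalgebra maps of $\C[\mathcal{D}]$. I write $\Delta^V,\varepsilon_V$ for the coalgebra maps of $V$ and $\Delta^t=T^{23}(\Delta^U\otimes\Delta^V)$, $\varepsilon_t=\varepsilon_U\otimes\varepsilon_V$ for those of $\C[\mathcal{D}]\otimes V$. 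First I would introduce
$$E=\{X\in \C[\mathcal{D}]\mid \Delta^V(\phi_{\mathcal{D}}(X\otimes v))=(\phi_{\mathcal{D}}\otimes\phi_{\mathcal{D}})\Delta^t(X\otimes v)\text{ for all }v\in V\}$$
and show $E=\C[\mathcal{D}]$.

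The base case $1\in E$ is immediate, since $\phi_{\mathcal{D}}(1\otimes v)=v$ while $\Delta^t(1\otimes v)=T^{23}((1\otimes 1)\otimes\Delta^V(v))$ collapses under $\phi_{\mathcal{D}}\otimes\phi_{\mathcal{D}}$ to $\Delta^V(v)$. For the inductive step, the decisive input is Remark~\ref{DV}, which says precisely that $\mathcal{D}$ is a coderivation of $V$: $\Delta^V(\mathcal{D}w)=(\mathcal{D}\otimes 1+1\otimes\mathcal{D})\Delta^V(w)$ and $\varepsilon_V(\mathcal{D}w)=0$. Given $X\in E$, I would compute $\Delta^V(\phi_{\mathcal{D}}(\mathcal{D}X\otimes v))=\Delta^V(\mathcal{D}(Xv))$, apply the coderivation identity with $w=Xv$, and then invoke $X\in E$ to rewrite $\Delta^V(Xv)$ as $(\phi_{\mathcal{D}}\otimes\phi_{\mathcal{D}})T^{23}(\Delta^U(X)\otimes\Delta^V(v))$. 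The one thing to observe is that, because $\phi_{\mathcal{D}}(\mathcal{D}Y\otimes w)=\mathcal{D}\phi_{\mathcal{D}}(Y\otimes w)$, applying $\mathcal{D}\otimes 1+1\otimes\mathcal{D}$ on $V\otimes V$ corresponds, through $\phi_{\mathcal{D}}\otimes\phi_{\mathcal{D}}$, to applying $\Delta^U(\mathcal{D})=\mathcal{D}\otimes 1+1\otimes\mathcal{D}$ to the two $\C[\mathcal{D}]$-factors; since $\Delta^U$ is an algebra map, this is exactly $\Delta^U(\mathcal{D}X)$. Hence $\mathcal{D}X\in E$, and as $\C[\mathcal{D}]$ is generated by $\mathcal{D}$ we conclude $E=\C[\mathcal{D}]$, that is, $\Delta^V\phi_{\mathcal{D}}=(\phi_{\mathcal{D}}\otimes\phi_{\mathcal{D}})\Delta^t$.

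It remains to match the counits. For $v\in V$ one has $\varepsilon_V(\phi_{\mathcal{D}}(1\otimes v))=\varepsilon_V(v)=\varepsilon_t(1\otimes v)$, while for $n\ge 1$ both sides vanish: $\varepsilon_V(\phi_{\mathcal{D}}(\mathcal{D}^n\otimes v))=\varepsilon_V(\mathcal{D}^nv)=0$ by Remark~\ref{DV}, and $\varepsilon_t(\mathcal{D}^n\otimes v)=\varepsilon_U(\mathcal{D}^n)\varepsilon_V(v)=0$ since $\varepsilon_U(\mathcal{D})=0$. This gives $\varepsilon_V\circ\phi_{\mathcal{D}}=\varepsilon_t$, completing the argument. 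I do not anticipate a genuine obstacle here; the only point deserving attention is the bookkeeping with the flip $T^{23}$ and the identification of left multiplication by $\mathcal{D}$ in $\C[\mathcal{D}]$ with the action of $\mathcal{D}$ on $V$ through $\phi_{\mathcal{D}}$. This is entirely parallel to Lemma~\ref{va-coalgebra-morphism} and is purely formal once the coderivation property of Remark~\ref{DV} is in hand.
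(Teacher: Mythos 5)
Your proposal is correct and is exactly the argument the paper intends: the paper gives no separate proof for this lemma, stating only that it follows ``similarly'' to Lemma~\ref{va-coalgebra-morphism}, and your adaptation of that induction scheme—with the coderivation property of $\mathcal{D}$ from Remark~\ref{DV} replacing the primitivity of $\Delta^V(a)_n$—is precisely the intended specialization. The bookkeeping with $T^{23}$ and the identification $\phi_{\mathcal{D}}(\mathcal{D}Y\otimes w)=\mathcal{D}\phi_{\mathcal{D}}(Y\otimes w)$ is handled correctly.
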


\begin{defi}\label{Vg}
{\em Let $V$ be a vertex bialgebra. For $g\in G(V)$, let $V_g$ denote the irreducible component of $V$, containing $\C g$
(an irreducible (simple) subcoalgebra).}
\end{defi}

The following is a technical result:

\begin{lemm}\label{PggV-PVg}
Let $V$ be a vertex bialgebra. Then 
\begin{align}
{\mathcal{L}}_{P(V)}g\subset P_{g,g}(V)\quad \text{ for }g\in G(V).
\end{align}
If $V$ is cocommutative, we have
\begin{align}
P_{g,g}(V)=P(V_g)\quad \text{ for }g\in G(V).
\end{align}
\end{lemm}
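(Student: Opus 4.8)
The plan is to treat the two assertions separately; the first holds for every vertex bialgebra and comes down to a one-line computation, while the second is where cocommutativity enters.

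For the inclusion $\mathcal{L}_{P(V)}g\subset P_{g,g}(V)$, recall from Proposition \ref{va-P(V)} that $\mathcal{L}_{P(V)}$ acts on $V$ with $a(n)$ acting as $a_n$ for $a\in P(V),\ n\in \Z$; hence $\mathcal{L}_{P(V)}g$ is spanned by the vectors $a_ng$, and it suffices to check $a_ng\in P_{g,g}(V)$. Since $\Delta$ is a vertex algebra morphism, $\Delta(a)=a\otimes{\bf 1}+{\bf 1}\otimes a$, and $\Delta(g)=g\otimes g$, I would compute
$$\Delta(Y(a,x)g)=Y(\Delta(a),x)\Delta(g)=\big(Y(a,x)\otimes 1+1\otimes Y(a,x)\big)(g\otimes g)=Y(a,x)g\otimes g+g\otimes Y(a,x)g,$$
using $Y(u\otimes v,x)=Y(u,x)\otimes Y(v,x)$ and $Y({\bf 1},x)=1$, and then read off the coefficient of $x^{-n-1}$ to obtain $\Delta(a_ng)=a_ng\otimes g+g\otimes a_ng$, i.e. $a_ng\in P_{g,g}(V)$.

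For the equality $P_{g,g}(V)=P(V_g)$ under cocommutativity, I would first note that over $\C$ a cocommutative coalgebra is pointed, so each irreducible component $V_g$ is a connected subcoalgebra whose unique group-like element is $g$; consequently $P(V_g)=\{u\in V_g\mid \Delta(u)=u\otimes g+g\otimes u\}=P_{g,g}(V)\cap V_g$, which gives the inclusion $P(V_g)\subset P_{g,g}(V)$ for free. The content is the reverse inclusion $P_{g,g}(V)\subset V_g$. Here I would use that cocommutativity yields the direct sum decomposition $V=\oplus_{h\in G(V)}V_h$ into subcoalgebras (Theorem \ref{Abe-thm2.4.7}(3)), so that $V\otimes V=\oplus_{h,k}V_h\otimes V_k$ and $\Delta(V_h)\subset V_h\otimes V_h$. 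Given $u\in P_{g,g}(V)$, write $u=\sum_h u_h$ with $u_h\in V_h$ (a finite sum); then $\Delta(u)=\sum_h\Delta(u_h)$ lies in $\oplus_h V_h\otimes V_h$, whereas $u\otimes g+g\otimes u=\sum_h(u_h\otimes g+g\otimes u_h)$ places $u_h\otimes g$ in the $(V_h\otimes V_g)$-slot and $g\otimes u_h$ in the $(V_g\otimes V_h)$-slot. Comparing the $V_h\otimes V_h$ component for $h\neq g$ forces $\Delta(u_h)=0$, whence $u_h=(\varepsilon\otimes{\rm id})\Delta(u_h)=0$ by the counit axiom. Thus $u=u_g\in V_g$, giving $P_{g,g}(V)\subset V_g$ and hence $P_{g,g}(V)=P_{g,g}(V)\cap V_g=P(V_g)$.

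The main obstacle is the bookkeeping in the second part: one must be sure that $V_g$ is genuinely a subcoalgebra (so that the restriction of $\Delta$ really computes $P(V_g)$) and that the decomposition $V=\oplus_h V_h$ is $\Delta$-compatible in the strong sense $\Delta(V_h)\subset V_h\otimes V_h$. Once these structural facts are secured, the comparison of homogeneous components of $\Delta(u)=u\otimes g+g\otimes u$ is routine and the counit axiom finishes the argument.
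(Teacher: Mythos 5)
Your proposal is correct and follows essentially the same route as the paper: the first inclusion is the same computation $\Delta(a_ng)=\Delta(a)_n\Delta(g)=a_ng\otimes g+g\otimes a_ng$ using that $\Delta$ is a vertex algebra morphism, and the second part is the same component comparison of $\Delta(v)$ against $v\otimes g+g\otimes v$ in the decomposition $V=\oplus_h V_h$. Your extra step invoking the counit to conclude $u_h=0$ from $\Delta(u_h)=0$ is a harmless elaboration of what the paper leaves implicit.
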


\begin{proof} Let $g\in G(V),\ a\in P(V),\ n\in \Z$. We have
$$\Delta(a_ng)=\Delta(a)_n\Delta(g)=(a_n\otimes 1+1\otimes a_n)(g\otimes g)=a_ng\otimes g+g\otimes a_ng,$$
which implies that $a_ng\in P_{g,g}(V)$. This proves the first assertion.
Assume that $V$ is cocommutative. Then $V=\oplus_{g\in G(V)}V_g$.
Let $v\in P_{g,g}(V)$ with $g\in G(V)$. Write $v=\sum_{h\in G(V)}v_{h}$ (a finite sum) with $v_h\in V_h$. 
We have
\begin{align*}
&\Delta(v)=\sum_{h\in G(V)}\Delta(v_{h})\in \bigoplus_{h\in G(V)}V_h\otimes V_h,\\
&\Delta(v)=g\otimes v+v\otimes g=\sum_{h\in G(V)}(v_{h}\otimes g+g\otimes v_h),
\end{align*}
which imply $v=v_g\in V_g$. Thus $P_{g,g}(V)=P_{g,g}(V_g)=P(V_g)$.
\end{proof}

Note that   if $V$ is an irreducible vertex bialgebra, then $G(V)=\{ {\bf 1}\}$ and $V=V_{\bf 1}$.
On the other hand, if $V$ is cocommutative (hence pointed),  we have $V=\bigoplus_{g\in G(V)}V_g$.

As the first main result of this paper, we have:

\begin{theo}\label{main-result1}
Let $V$ be a cocommutative vertex bialgebra. 
Denote by $\langle P(V)\rangle$ the vertex subalgebra of $V$ generated by $P(V)$.
Then in the connected component decomposition
\begin{align}
V=\bigoplus_{g\in G(V)}V_g,
\end{align}
$V_{\bf 1}$ is a vertex subbialgebra of $V$ and $V_{\bf 1}=\langle P(V)\rangle\simeq {\mathcal{V}}_{P(V)}$.
Furthermore, for $g\in G(V)$, $V_g$ is a submodule of $V$ for $V_{\bf 1}$ as a vertex algebra. 
\end{theo}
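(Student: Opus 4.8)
The plan is to build the natural vertex algebra morphism $\Psi\colon {\mathcal{V}}_{P(V)}\to V$ and show it restricts to a vertex bialgebra isomorphism onto $V_{\bf 1}$. By Proposition \ref{va-P(V)} the inclusion $P(V)\hookrightarrow V$ is a morphism of vertex Lie algebras, so the universal property of ${\mathcal{V}}_{P(V)}$ (due to Primc) yields a unique vertex algebra morphism $\Psi\colon {\mathcal{V}}_{P(V)}\to V$ with $\Psi|_{P(V)}=\mathrm{id}$. Since ${\mathcal{V}}_{P(V)}$ is generated by $P(V)$ (Proposition \ref{3.7}), its image $\Psi({\mathcal{V}}_{P(V)})$ is precisely the vertex subalgebra $\langle P(V)\rangle$.

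I would first verify that $\Psi$ is also a coalgebra morphism, and hence a vertex bialgebra morphism. The maps $\Delta_V\circ\Psi$ and $(\Psi\otimes\Psi)\circ\Delta_{{\mathcal{V}}_{P(V)}}$ are both vertex algebra morphisms ${\mathcal{V}}_{P(V)}\to V\otimes V$, and on each $a\in P(V)$ both equal $a\otimes{\bf 1}+{\bf 1}\otimes a$; as $P(V)$ generates ${\mathcal{V}}_{P(V)}$, they coincide. Comparing $\varepsilon_V\circ\Psi$ with $\varepsilon_{{\mathcal{V}}_{P(V)}}$ on $P(V)$ (both vanish there, using $\varepsilon_V(a)=0$ from Proposition \ref{va-P(V)}) gives the counit identity. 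Thus $\langle P(V)\rangle=\Psi({\mathcal{V}}_{P(V)})$ is a vertex subbialgebra; being the image of the connected coalgebra ${\mathcal{V}}_{P(V)}$ (Proposition \ref{VC-facts}), it is connected by Theorem \ref{Abe-C2.4.21-T2.4.22}(i), and since it is connected and contains ${\bf 1}$, it lies inside $V_{\bf 1}$.

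The reverse inclusion $V_{\bf 1}\subseteq\langle P(V)\rangle$ is the crux, and it is exactly where the classical argument breaks down, since the vertex multiplication is not a coalgebra morphism; the remedy is the enveloping-algebra machinery. I would regard $\Psi$ as a coalgebra morphism into the cocommutative connected coalgebra $V_{\bf 1}$. Under the coalgebra isomorphism ${\mathcal{V}}_{P(V)}\cong U({\mathcal{L}}_{P(V)}^-)$ of Proposition \ref{VC-facts}, $\Psi$ becomes a coalgebra morphism out of a universal enveloping algebra whose restriction to primitives is $\Psi|_{P(V)}\colon P(V)\to P(V_{\bf 1})$. By Lemma \ref{PggV-PVg} with $g={\bf 1}$ one has $P(V_{\bf 1})=P_{{\bf 1},{\bf 1}}(V)=P(V)$, so this restriction is the identity, hence bijective; Corollary \ref{B(g)=U(g)} then forces $\Psi$ to be a bijection onto $V_{\bf 1}$. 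Therefore $V_{\bf 1}=\langle P(V)\rangle$ and $\Psi$ is a vertex bialgebra isomorphism from ${\mathcal{V}}_{P(V)}$ onto $V_{\bf 1}$.

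For the final assertion I would use the coalgebra morphism $\phi\colon U({\mathcal{L}}_{P(V)})\otimes V\to V$ of Lemma \ref{va-coalgebra-morphism}. Since $U({\mathcal{L}}_{P(V)})$ is connected (Proposition \ref{U(g)}) and $V_g$ is connected, the subcoalgebra $U({\mathcal{L}}_{P(V)})\otimes V_g$ is connected (Theorem \ref{Abe-C2.4.21-T2.4.22}(ii)); its image under $\phi$ is then a connected subcoalgebra of $V$ (Theorem \ref{Abe-C2.4.21-T2.4.22}(i)) containing $\phi(1\otimes g)=g$, hence is contained in $V_g$. In particular $a_nw=\phi(a(n)\otimes w)\in V_g$ for all $a\in P(V),\ n\in\Z,\ w\in V_g$, so ${\mathcal{L}}_{P(V)}V_g\subseteq V_g$. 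Finally, the set of $u\in V$ with $u_nV_g\subseteq V_g$ for all $n\in\Z$ is a vertex subalgebra (by the commutator and associativity formulas for vertex algebras) containing $P(V)$, so it contains $\langle P(V)\rangle=V_{\bf 1}$; this shows $V_g$ is a submodule of $V$ for the vertex algebra $V_{\bf 1}$. The main obstacle is the surjectivity of $\Psi$ onto $V_{\bf 1}$, which forces one to replace the failed coalgebra-morphism property of multiplication by the coalgebra structure of $U({\mathcal{L}}_{P(V)})$ together with the primitives computation $P(V_{\bf 1})=P(V)$.
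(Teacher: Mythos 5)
Your proposal is correct and follows essentially the same route as the paper's own proof: Primc's morphism $\Psi:{\mathcal{V}}_{P(V)}\rightarrow V$ is shown to be a coalgebra morphism by a generating-set argument, surjectivity onto $V_{\bf 1}$ is forced by Corollary \ref{B(g)=U(g)} together with $P(V_{\bf 1})=P(V)$, and the module claim uses the coalgebra morphism of Lemma \ref{va-coalgebra-morphism} with the connectedness of $U({\mathcal{L}}_{P(V)})\otimes V_g$. The only (harmless) deviations are that you pass through $U({\mathcal{L}}_{P(V)}^{-})$ rather than $U(P(V))$ with the transported Lie bracket, and you make explicit the final step that the stabilizer of $V_g$ is a vertex subalgebra containing $P(V)$, which the paper leaves implicit.
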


\begin{proof} For convenience, set $U=\langle P(V)\rangle$. As a subspace,
$U$ is linearly spanned by vectors
$$a_{n_1}^{(1)}\cdots a^{(r)}_{n_r}{\bf 1}$$
for $r\in \N,\ a^{(i)}\in P(V),\ n_i\in \Z$. That is, $U=U({\mathcal{L}}_{P(V)}){\bf 1}$. With $P(V)$ a vertex Lie subalgebra of $V$,
by a result of Primc (see \cite{P}), there exists a vertex algebra morphism $\psi:  {\mathcal{V}}_{P(V)} \rightarrow V$ 
with $\psi|_{P(V)}=1$, where ${\mathcal{V}}_{P(V)}$ is
the vertex algebra  associated to the vertex Lie algebra $P(V)$. It is clear that $\psi({\mathcal{V}}_{P(V)})=U$.
By using the fact that $\Delta$ is a vertex algebra morphism, it is straightforward to show that
 the subset $M:=\{ v\in V\ |\ \Delta(v)\in U\otimes U\}$ is a vertex subalgebra containing $P(V)$.
Consequently, we have $U\subset M$, and hence $U$ is a subcoalgebra of $V$.
Similarly, by showing that the subset
$$W:=\{ v\in \mathcal{V}_{P(V)}\ |\  (\psi\otimes \psi)\Delta(v)=\Delta \psi(v),\  \varepsilon \psi(v)=\varepsilon(v)\}$$
is a vertex subalgebra containing $P(V)$,
we conclude that $W=\mathcal{V}_{P(V)}$. Consequently, $\psi$ is also a coalgebra morphism.
Since ${\mathcal{V}}_{P(V)}$ is cocommutative and connected,
 in view of Theorem \ref{Abe-C2.4.21-T2.4.22} $U$ is irreducible (and connected) with ${\bf 1}\in U$. Then $U\subset V_{\bf 1}$.
From this we have $P(V)\subset U\subset V_{\bf 1}$. It follows that $P(V)=P(V_{\bf 1})$.

View $P(V)$ as a Lie algebra as in Remark \ref{LA-on-va}.  
Recall that we have a canonical coalgebra isomorphism
$$\eta: \  U(P(V))\rightarrow  {\mathcal V}_{P(V)}.$$ 
Combining $\eta$ with the coalgebra morphism $\psi$, we get a coalgebra morphism
 $\psi\circ \eta$ from $U(P(V))$ to $V_{\bf 1}$.
Note that $V_{\bf 1}$ is cocommutative and connected with $P(V_{\bf 1})=P(V)$.
Then by Corollary \ref{B(g)=U(g)} $\psi\circ \eta$ is a coalgebra isomorphism and hence
$\psi$ is a coalgebra isomorphism.
Therefore, we have ${\mathcal V}_{P(V)} \simeq \langle P(V)\rangle=V_{\bf 1}$.
Consequently, $V_{\bf 1}$ is a vertex subalgebra of $V$  and hence a vertex subbialgebra. 

Next, we prove the furthermore assertion. Recall from Proposition \ref{va-P(V)} that 
$V$ is naturally a module for the Lie algebra ${\mathcal {L}}_{P(V)}$.
Let $g\in G(V)$. Set
$$K=\{ w\in V\ |\ \Delta(w)\in U({\mathcal {L}}_{P(V)})V_g\otimes U({\mathcal {L}}_{P(V)})V_g\}.$$
As $V_g$ is a subcoalgebra, we have $V_g\subset K$. Let $v\in P(V),\ n\in \Z,\  w\in K$. Then
\begin{align}
\Delta(v_{n}w)=\Delta(v)_n\Delta(w)=(v_{n}\otimes 1+1\otimes v_{n})\Delta(w)
\in U({\mathcal {L}}_{P(V)})V_g\otimes U({\mathcal {L}}_{P(V)})V_g,
\end{align}
which implies $v_nw\in K$. This proves that $K$ is an ${\mathcal {L}}_{P(V)}$-submodule of $V$. Consequently, we have 
 $U({\mathcal {L}}_{P(V)})V_g\subset K$. 
Thus $U({\mathcal {L}}_{P(V)})V_g$ is a subcoalgebra of $V$.

Recall from Lemma \ref{va-coalgebra-morphism} the coalgebra morphism $\phi: U({\mathcal {L}}_{P(V)})\otimes V\rightarrow V$.
As both $U({\mathcal {L}}_{P(V)})$ and $V_g$ are connected coalgebras, by Theorem \ref{Abe-C2.4.21-T2.4.22} again
$U({\mathcal {L}}_{P(V)})V_g$ as the image of $U({\mathcal {L}}_{P(V)})\otimes V_g$ under $\phi$ is connected. 
Then we have  $U({\mathcal {L}}_{P(V)})V_g\subset V_g$,
proving that $V_g$ is a $U({\mathcal {L}}_{P(V)})$-submodule of $V$. Consequently,  $V_g$ is a submodule of $V$ 
for $V_{\bf 1}$ as a vertex algebra as $V_{\bf 1}=\langle P(V)\rangle$.
\end{proof}

\begin{remark}\label{largest-cocomm}
{\em Note that every coalgebra has a largest cocommutative subcoalgebra, which coincides with the sum of 
all cocommutative subcoalgebras. For any vertex bialgebra $V$, it is straightforward to show that 
the largest cocommutative subcoalgebra of $V$ is a vertex subbialgebra.}
\end{remark}

\begin{lemm}\label{V-sharp-result1}
Let $V$ be a vertex bialgebra. Define $V^{\sharp}$ to be the vertex subalgebra of $V$ generated by $P(V)+ \C[G(V)]$.
Then $V^{\sharp}$ is a vertex subbialgebra which is cocommutative with
$G(V^{\sharp})=G(V)$ and $P(V^{\sharp})=P(V)$.
\end{lemm}

\begin{proof} Note that the vertex subalgebra $\langle P(V)\rangle$ generated by $P(V)$ is canonically isomorphic to ${\mathcal{V}}_{P(V)}$,
which is cocommutative. On the other hand, $\C[G(V)]$ is a cocommutative subcoalgebra of $V$. 
So $\langle P(V)\rangle +\C[G(V)]$ is a cocommutative subcoalgebra of $V$.
Just as in Theorem \ref{main-result1} for proving that $\langle P(V)\rangle$ is a vertex subbialgebra, we see that
$V^{\sharp}$ is a cocommutative vertex subbialgebra.
(More generally,  for any cocommutative subcoalgebra $C$ of $V$,
$\langle C\rangle$ is a cocommutative vertex subbialgebra.)
It can be readily seen that $G(V^{\sharp})=G(V)$ and $P(V^{\sharp})=P(V)$.
\end{proof}

Let $V$ be any vertex bialgebra. By Proposition \ref{va-P(V)}, $P(V)$ is a vertex Lie subalgebra of $V$.
From \cite{P}, there exists a uniquely determined vertex algebra morphism $\psi^V: {\mathcal{V}}_{P(V)}\rightarrow V$ with
 $\psi^V|_{P(V)}=1$.
 
As an immediate consequence of Theorem \ref{main-result1} we have: 

\begin{coro}\label{vla-identification}
Let $V$ be any cocommutative connected vertex bialgebra.
Then the vertex algebra morphism $\psi^V$ from ${\mathcal{V}}_{P(V)}$
to $V$ is a vertex bialgebra isomorphism.
\end{coro}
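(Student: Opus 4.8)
The plan is to obtain this as a direct specialization of Theorem \ref{main-result1}, the only new input being that connectedness forces the group-like set to be trivial. First I would observe that the vacuum vector satisfies $\Delta({\bf 1})={\bf 1}\otimes {\bf 1}$ and $\varepsilon({\bf 1})=1$, since $\Delta$ and $\varepsilon$ are vertex algebra morphisms and hence preserve the vacuum; thus ${\bf 1}\in G(V)$. Invoking the standard coalgebra fact that a connected coalgebra has one and only one group-like element, I would conclude $G(V)=\{{\bf 1}\}$. Consequently the connected component decomposition of Theorem \ref{main-result1} collapses to $V=V_{\bf 1}$.

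Next I would feed this into Theorem \ref{main-result1}, which asserts $V_{\bf 1}=\langle P(V)\rangle\simeq {\mathcal{V}}_{P(V)}$, the isomorphism being realized by the vertex algebra morphism constructed in that proof, call it $\psi$, with $\psi|_{P(V)}=1$, which is also shown there to be a coalgebra isomorphism onto $V_{\bf 1}$. The short verification needed here is that this $\psi$ coincides with the map $\psi^V$ of the corollary: both are vertex algebra morphisms ${\mathcal{V}}_{P(V)}\rightarrow V$ restricting to the identity on $P(V)$, so by the uniqueness in Primc's universal property (the same result cited to produce $\psi^V$) we have $\psi=\psi^V$. Since $V=V_{\bf 1}$, the map $\psi^V$ is surjective onto all of $V$, and being a coalgebra isomorphism onto $V_{\bf 1}=V$ it is in particular bijective.

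Finally I would record the purely formal conclusion that a bijective map which is simultaneously a vertex algebra morphism and a coalgebra morphism is a vertex bialgebra isomorphism: the set-theoretic inverse of a bijective vertex algebra morphism is again a vertex algebra morphism, and likewise the inverse of a bijective coalgebra morphism is a coalgebra morphism, so $(\psi^V)^{-1}$ is a vertex bialgebra morphism as well. Hence $\psi^V$ is a vertex bialgebra isomorphism from ${\mathcal{V}}_{P(V)}$ onto $V$. I do not expect any serious obstacle: all the substantive work — establishing that $\langle P(V)\rangle$ is a vertex subbialgebra isomorphic to ${\mathcal{V}}_{P(V)}$ via a coalgebra isomorphism — is already carried out in Theorem \ref{main-result1}, and the only genuinely new steps are the reduction $G(V)=\{{\bf 1}\}$ from connectedness and the identification $\psi=\psi^V$.
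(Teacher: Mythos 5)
Your argument is correct and is exactly the paper's route: the paper presents this corollary as an immediate consequence of Theorem \ref{main-result1}, which is precisely the specialization you carry out (connectedness forces $G(V)=\{{\bf 1}\}$, hence $V=V_{\bf 1}\simeq {\mathcal{V}}_{P(V)}$ via the map that uniqueness identifies with $\psi^V$). The extra details you supply — the identification $\psi=\psi^V$ and the formal remark about inverses of bijective morphisms — are sound and merely make explicit what the paper leaves implicit.
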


\begin{remark}\label{category-equivalence}
{\em Let $\mathscr{C}_{ccva}$ denote the category of all cocommutative connected vertex bialgebras
and let $\mathscr{C}_{vla}$ denote the category of all vertex Lie algebras. Denote by $P$ the functor from $\mathscr{C}_{ccva}$
to $\mathscr{C}_{vla}$ with $V\mapsto P(V)$ and by ${\mathcal{V}}$ the functor from $\mathscr{C}_{vla}$ to $\mathscr{C}_{ccva}$
with $C\mapsto {\mathcal{V}}_C$.
In view of Proposition \ref{VC-facts}, $P$ is a left adjoint to ${\mathcal{V}}$. This together with Corollary \ref{vla-identification} implies that
functors $P$ and ${\mathcal{V}}$ are category equivalences.}
\end{remark}

We continue to study the structure of cocommutative vertex bialgebras.

\begin{lemm}\label{invertible-case}
Let $V$ be a cocommutative vertex bialgebra. 
Suppose $g\in G(V)$ such that $g_n=0$ for $n\ge 0$, or equivalently $g$ lies in the center of $V$ as a vertex algebra.
Then $g_{-1}$ is a coalgebra morphism from $V$ to $V$ and 
\begin{align}
g_{-1}V_h\subset V_{gh}\quad \text{ for }h\in G(V).
\end{align}
Furthermore, if $g$ is also invertible, then $g_{-1}$ is a coalgebra isomorphism 
which gives an isomorphism from $V_{\bf 1}$ 
to $V_g$ as ${\mathcal{L}}_{P(V)}$-modules and as coalgebras, and
\begin{align}
V_{g}=U({\mathcal{L}}_{P(V)})g
\end{align}
\end{lemm}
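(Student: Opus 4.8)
The first two assertions use only the centrality of $g$. To see that $g_{-1}$ is a coalgebra morphism, I would compute, for $v\in V$ with $\Delta(v)=\sum v^{(1)}\otimes v^{(2)}$, that $\Delta(g_{-1}v)=\Delta(g)_{-1}\Delta(v)=(g\otimes g)_{-1}\Delta(v)=\sum_{m\in\Z}g_mv^{(1)}\otimes g_{-m-2}v^{(2)}$; since $g_n=0$ for $n\ge 0$, only the term $m=-1$ survives, giving $(g_{-1}\otimes g_{-1})\Delta(v)$, while $\varepsilon(g_{-1}v)=\varepsilon(g)\varepsilon(v)=\varepsilon(v)$ as $\varepsilon(g)=1$. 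For the inclusion $g_{-1}V_h\subset V_{gh}$, I would argue that $g_{-1}V_h$, being the image of the connected coalgebra $V_h$ under the coalgebra morphism $g_{-1}$, is again connected by Theorem \ref{Abe-C2.4.21-T2.4.22}, and that it contains the group-like element $g_{-1}h=gh$ by Proposition \ref{group-like-group}; hence it is an irreducible subcoalgebra containing $gh$, and so lies in its irreducible component $V_{gh}$ by Theorem \ref{Abe-thm2.4.7}.

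For the \emph{furthermore} part, the plan is to prove that $g_{-1}$ is invertible with inverse $(g^{-1})_{-1}$. Write $h=g^{-1}\in G(V)$, so that $g_{-1}h={\bf 1}$. First I would record that centrality makes $Y(g,x)=\sum_{k\ge 0}g_{-1-k}x^k$ a power series, whence the standard normal-ordered product formula $Y(u_{-1}v,x)=\,:Y(u,x)Y(v,x):$ (cf.\ \cite{LL}) degenerates, for $u=g$, to $Y(g,x)Y(h,x)=Y(g_{-1}h,x)=Y({\bf 1},x)=1$; combined with $[Y(g,x_1),Y(h,x_2)]=0$ (again centrality), specializing $x_1=x_2$ also yields $Y(h,x)Y(g,x)=1$. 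Thus $Y(g,x)$ is a two-sided invertible operator with inverse $Y(h,x)$.

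The main obstacle is to show that $h=g^{-1}$ is itself central; granting this, $Y(h,x)$ is a power series and comparing constant terms in $Y(g,x)Y(h,x)=Y(h,x)Y(g,x)=1$ gives $g_{-1}h_{-1}=h_{-1}g_{-1}=\mathrm{id}_V$, so that $g_{-1}$ is a bijective coalgebra morphism, i.e.\ a coalgebra isomorphism. To prove centrality of $h$, I would fix $w\in V$ and set $F=[Y(h,x_1),Y(w,x_2)]$. Since $Y(g,x_1)$ commutes with $Y(w,x_2)$ and $Y(g,x_1)Y(h,x_1)=1$, a direct computation gives $Y(g,x_1)F=Y(w,x_2)-Y(w,x_2)=0$; left-multiplying by $Y(h,x_1)$ and using $Y(h,x_1)Y(g,x_1)=1$ then forces $F=0$, that is, $[Y(h,x_1),Y(w,x_2)]=0$ for every $w$, which by the Borcherds commutator formula means $h_jw=0$ for all $j\ge 0$, so $h$ is central. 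The one point demanding care is the legitimacy of these manipulations at the coinciding variable $x_1$; this is harmless precisely because $Y(g,x_1)$ has no singular part, so every product involved is well defined and the identity $Y(h,x_1)Y(g,x_1)=1$ licenses the cancellation.

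Finally I would assemble the structural consequences. Centrality of $g$ gives $[g_{-1},a_n]=0$ for $a\in P(V)$, $n\in\Z$, so $g_{-1}$ is ${\mathcal L}_{P(V)}$-linear; by the first part $g_{-1}V_{\bf 1}\subset V_g$, and applying the inverse $h_{-1}$, which maps $V_g$ into $V_{g^{-1}g}=V_{\bf 1}$, shows that $g_{-1}$ restricts to a bijection $V_{\bf 1}\to V_g$, necessarily an isomorphism both of ${\mathcal L}_{P(V)}$-modules and of coalgebras. Since $V_{\bf 1}=U({\mathcal L}_{P(V)}){\bf 1}$ by Theorem \ref{main-result1} and $g_{-1}$ commutes with $U({\mathcal L}_{P(V)})$ with $g_{-1}{\bf 1}=g$, I would conclude $V_g=g_{-1}V_{\bf 1}=g_{-1}U({\mathcal L}_{P(V)}){\bf 1}=U({\mathcal L}_{P(V)})g$, as desired.
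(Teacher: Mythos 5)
The first half of your argument — that $g_{-1}$ is a coalgebra morphism because only the $m=-1$ term survives in $(g\otimes g)_{-1}\Delta(v)$, and that $g_{-1}V_h\subset V_{gh}$ because $g_{-1}V_h$ is a connected subcoalgebra containing the group-like $g_{-1}h=gh$ — is exactly the paper's proof, as is your final assembly of the structural consequences ($\mathcal{L}_{P(V)}$-linearity of $g_{-1}$ and $V_g=g_{-1}U(\mathcal{L}_{P(V)}){\bf 1}=U(\mathcal{L}_{P(V)})g$). The divergence, and the problem, is in your attempt to prove that $h=g^{-1}$ is central.

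Your cancellation step is the classical delta-function fallacy in disguise. You set $F=[Y(h,x_1),Y(w,x_2)]$, show $Y(g,x_1)F=0$, and then conclude $F=Y(h,x_1)\bigl(Y(g,x_1)F\bigr)=0$ by invoking $Y(h,x_1)Y(g,x_1)=1$. But passing from $Y(h,x_1)\bigl(Y(g,x_1)F\bigr)$ to $\bigl(Y(h,x_1)Y(g,x_1)\bigr)F$ is a reassociation of a triple product of doubly infinite series in the single variable $x_1$; it is valid only when the underlying triple sum is finitely supported on each vector, which is not guaranteed here (note that $F$ is, by the Borcherds commutator formula, a combination of derivatives of $x_1^{-1}\delta(x_2/x_1)$, so it is doubly infinite in $x_1$). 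The structure is identical to $\bigl(\tfrac{1}{1-x}(1-x)\bigr)\delta(x)=\delta(x)\neq 0=\tfrac{1}{1-x}\bigl((1-x)\delta(x)\bigr)$: the fact that each pairwise product is well defined (because $Y(g,x_1)$ has no singular part) does not license the reassociation, so your argument proves only $0=0$. The paper does not attempt this step at all: in its one-line treatment of the \emph{furthermore} clause it tacitly applies the first part of the lemma to $g^{-1}$ as well, which is legitimate because in the only place the lemma is used (Theorem \ref{thm-second}) the whole group $G(V)$ is assumed to lie in the center, so $g^{-1}$ is central by hypothesis and $(g^{-1})_{-1}g_{-1}=\bigl((g^{-1})_{-1}g\bigr)_{-1}={\bf 1}_{-1}=\operatorname{id}$ follows from the iterate formula for central elements. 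The clean fix for your write-up is to take centrality of $g^{-1}$ as part of the hypothesis (or to observe it is available in the intended application) rather than to derive it; if you insist on deriving it from centrality of $g$ alone, you need a genuinely different argument, e.g.\ one that first establishes injectivity of $g_{-1}$ and then kills $h_nv$ by downward induction on $n$ using $\sum_{k\ge 0}g_{-1-k}h_{n+k}=0$.
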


\begin{proof} For any $v\in V$, we have
\begin{align*}
&\varepsilon (g_{-1}v)=\varepsilon(g)\varepsilon(v)=\varepsilon(v),\\
& \Delta(g_{-1}v)=\Delta(Y(g,x)v)|_{x=0}=(Y(g,x)\otimes Y(g,x))\Delta(v)|_{x=0}=(g_{-1}\otimes g_{-1})\Delta(v).
\end{align*}
This shows that $g_{-1}$  is a coalgebra morphism from $V$ to $V$. 
It follows that $g_{-1}V_h\subset V_{gh}$ for $h\in G(V)$ as $g_{-1}V_h$ is a connected subcoalgebra containing $gh$.
If $g$ is also invertible, then we obtain $g_{-1}V_h= V_{gh}$ for $h\in G(V)$. In this case, we get
\begin{align*}
V_{g}=g_{-1}V_{\bf 1}=g_{-1}U({\mathcal{L}}_{P(V)}){\bf 1}=U({\mathcal{L}}_{P(V)})g_{-1}{\bf 1}=U({\mathcal{L}}_{P(V)})g,
\end{align*}
as desired.
\end{proof}

Let $V$ be any vertex algebra. For $a\in V$, set
\begin{align}\label{E-minus}
E^{-}(a,x)=\exp\left(\sum_{n\in \Z_{+}}\frac{1}{n}a_{-n}x^n\right).
\end{align}

Using Theorem \ref{main-result1} and Lemma \ref{invertible-case}, we have:

\begin{theo}\label{thm-second}
Let $V$ be a cocommutative vertex bialgebra such that $G(V)$ is a group which lies in the center of $V$ as a vertex algebra.
Then
\begin{align}
V={\mathcal V}_{P(V)}\otimes \C[G(V)]
\end{align}
as a coalgebra, and for $v\in {\mathcal V}_{P(V)},\  \alpha \in G(V)$,
\begin{align}
Y(v\otimes e^{\alpha},x)=E^{-}(\phi(\alpha),x)Y(v,x)\otimes e^{\alpha},
\end{align}
 where $\phi(\alpha)=(\alpha^{-1})_{-1}{\mathcal{D}}(\alpha)\in V$.
Furthermore, $\phi(\alpha)\in P(V)$ for $\alpha\in G(V)$ and 
\begin{align}
\phi(\alpha\beta)=\phi(\alpha)+\phi(\beta)\quad \text{ for }\alpha,\beta\in G(V).
\end{align}
\end{theo}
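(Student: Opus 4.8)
The plan is to transport the vertex algebra structure of $V$ along the coalgebra decomposition furnished by Theorem \ref{main-result1} and Lemma \ref{invertible-case}, reducing everything to a single operator identity on the connected component $V_{\bf 1}\simeq {\mathcal{V}}_{P(V)}$. First I would set up the coalgebra isomorphism. By Theorem \ref{main-result1} we have $V=\bigoplus_{g\in G(V)}V_g$ with $V_{\bf 1}\simeq {\mathcal{V}}_{P(V)}$ via $\psi^V$, and since $G(V)$ is a central group, Lemma \ref{invertible-case} gives that each $g_{-1}\colon V_{\bf 1}\to V_g$ is a coalgebra isomorphism with $V_g=g_{-1}V_{\bf 1}$. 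I would then define $\Phi\colon {\mathcal{V}}_{P(V)}\otimes \C[G(V)]\to V$ by $\Phi(v\otimes e^{g})=g_{-1}\psi^V(v)$ and check it is a coalgebra isomorphism: bijectivity is immediate, and comultiplication is respected because $\Delta(g_{-1}w)=(g_{-1}\otimes g_{-1})\Delta(w)$ (Lemma \ref{invertible-case}) together with the group-likeness of each $e^{g}$ matches the tensor-product comultiplication $\Delta(v\otimes e^{g})=\sum (v_{(1)}\otimes e^{g})\otimes(v_{(2)}\otimes e^{g})$. This establishes the first assertion.

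Next I would record the needed facts about the center $Z(V)=\{v\in V\mid v_n=0\text{ for }n\ge 0\}$. It is a commutative vertex subalgebra closed under all products (for central $u$, the iterate formula gives $Y(u_{-1}v,x)=Y(u,x)Y(v,x)$, a series with only nonnegative powers of $x$), and it is ${\mathcal{D}}$-stable. In particular $\alpha_{-1}$ is invertible with inverse $(\alpha^{-1})_{-1}$, and $\phi(\alpha)=(\alpha^{-1})_{-1}{\mathcal{D}}(\alpha)$ lies in $Z(V)$, so its modes mutually commute; moreover ${\mathcal{D}}\alpha=\alpha_{-1}\phi(\alpha)$. To see $\phi(\alpha)\in P(V)$, I would apply $\Delta$: since $\Delta(\alpha^{-1})=\alpha^{-1}\otimes \alpha^{-1}$ and $\Delta({\mathcal{D}}\alpha)={\mathcal{D}}\alpha\otimes \alpha+\alpha\otimes {\mathcal{D}}\alpha$, a short residue computation in $V\otimes V$ using centrality collapses $\Delta(\phi(\alpha))$ to $\phi(\alpha)\otimes {\bf 1}+{\bf 1}\otimes \phi(\alpha)$. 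For additivity I would observe that $\phi$ behaves as a logarithmic derivative on the commutative differential algebra $\langle G(V)\rangle$ (Lemma \ref{subalgebra-G(V)}): from ${\mathcal{D}}(\alpha\beta)=({\mathcal{D}}\alpha)\beta+\alpha({\mathcal{D}}\beta)$ and $(\alpha\beta)^{-1}=\alpha^{-1}\beta^{-1}$ one gets $\phi(\alpha\beta)=\alpha^{-1}\beta^{-1}\big[({\mathcal{D}}\alpha)\beta+\alpha({\mathcal{D}}\beta)\big]=\phi(\alpha)+\phi(\beta)$, all products being the $(-1)$-product.

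The core is the operator identity $Y(\alpha,x)c=\alpha_{-1}E^{-}(\phi(\alpha),x)c$ for $c\in V_{\bf 1}$. I would derive it from the differential equation $\frac{d}{dx}Y(\alpha,x)=Y({\mathcal{D}}\alpha,x)=Y(\alpha_{-1}\phi(\alpha),x)=Y(\alpha,x)Y(\phi(\alpha),x)$, the last step being the iterate formula for the central element $\alpha$. Writing $Y(\alpha,x)=\alpha_{-1}F(x)$ with $F(0)=\mathrm{id}$, this becomes $F'(x)=F(x)\big(\sum_{n\ge 1}\phi(\alpha)_{-n}x^{n-1}\big)$; because $\phi(\alpha)$ is central, $Y(\phi(\alpha),x)$ has only its creation part, and because the modes $\phi(\alpha)_{-n}$ commute, the unique solution is the honest exponential $F(x)=E^{-}(\phi(\alpha),x)$. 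Finally I would transport through $\Phi$: using the iterate formula, the centrality of each $\beta$ (so $\beta_{-1}$ commutes with both $Y(v,x)$ and $Y(\alpha,x)$), and $(\alpha\beta)_{-1}=\alpha_{-1}\beta_{-1}$, the computation $Y(\alpha_{-1}v,x)(\beta_{-1}b)=\beta_{-1}Y(\alpha,x)Y(v,x)b=(\alpha\beta)_{-1}E^{-}(\phi(\alpha),x)Y(v,x)b$ matches exactly the action of $E^{-}(\phi(\alpha),x)Y(v,x)\otimes e^{\alpha}$ on $b\otimes e^{\beta}$.

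The main obstacle I anticipate is the core operator identity: one must recognize that $\phi(\alpha)$ is genuinely central, which is precisely what forces $Y(\phi(\alpha),x)$ to reduce to its creation part and makes the differential equation integrate to the clean exponential $E^{-}$ rather than a path-ordered exponential. Establishing the centrality of $\phi(\alpha)$ through closure of $Z(V)$ under products and under ${\mathcal{D}}$ is the crux that makes the whole normal-ordered formula collapse, and it is also what underlies both $\phi(\alpha)\in P(V)$ and the additivity of $\phi$.
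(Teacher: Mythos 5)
Your proposal is correct and follows essentially the same route as the paper: the coalgebra identification via Theorem \ref{main-result1} and Lemma \ref{invertible-case}, the $\Delta$-computation showing $\phi(\alpha)\in P(V)$, the logarithmic-derivative argument for additivity, and the derivation of $Y(\alpha,x)=\alpha_{-1}E^{-}(\phi(\alpha),x)$ from the differential equation $\frac{d}{dx}Y(\alpha,x)=Y(\alpha,x)Y(\phi(\alpha),x)$ (the paper phrases this inside the commutative differential algebra $\langle G(V)\rangle$ rather than as an operator ODE, but it is the same computation). Your explicit observation that the center is closed under $(-1)$-products and under $\mathcal{D}$, so that $\phi(\alpha)$ is central and its negative modes commute, is exactly the point the paper uses implicitly.
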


\begin{proof} From Theorem \ref{main-result1} and Lemma \ref{invertible-case}, we have
$V=\oplus_{g\in G(V)}V_g$, where $V_{\bf 1}={\mathcal V}_{P(V)}$ as vertex bialgebras and $V_g=g_{-1}V_{\bf 1}$ as 
coalgebras and $V_{\bf 1}$-modules for $g\in G(V)$.  
Thus we have $V={\mathcal V}_{P(V)}\otimes \C[G(V)]$ as coalgebras and as ${\mathcal V}_{P(V)}$-modules
with $V_g$ identified with $V_{\bf 1}\otimes g$ for $g\in G(V)$. 

Note that for any central element $u$ of $V$, as $u_n=0$  for $n\ge 0$ we have
$$(u\otimes u)_{-1}={\rm Res}_x x^{-1}(Y(u,x)\otimes Y(u,x))=u_{-1}\otimes u_{-1}.$$
Let $g\in G(V)$. Then
\begin{align*}
&\Delta((g^{-1})_{-1}{\mathcal{D}}(g))=(g^{-1}\otimes g^{-1})_{-1}\Delta ({\mathcal{D}}(g))=((g^{-1})_{-1}\otimes (g^{-1})_{-1})({\mathcal{D}}\otimes 1+1\otimes {\mathcal{D}})(g\otimes g)\\
&=\ (g^{-1})_{-1}{\mathcal{D}}(g)\otimes {\bf 1}+{\bf 1}\otimes (g^{-1})_{-1}{\mathcal{D}}(g)),
\end{align*}
which shows that $(g^{-1})_{-1}{\mathcal{D}}(g)\in P(V)$. Define a map
$$\phi: G(V)\rightarrow P(V);\ \phi(g)=g^{-1}{\mathcal{D}}(g):=(g^{-1})_{-1}{\mathcal{D}}(g).$$
For $g,h\in G(V)$, we have
\begin{align*}
\phi(gh)=(gh)^{-1}{\mathcal{D}}(gh)=g^{-1}h^{-1}({\mathcal{D}}(g)h+g{\mathcal{D}}(h))=g^{-1}{\mathcal{D}}(g)+h^{-1}{\mathcal{D}}(h)
=\phi(g)+\phi(h).
\end{align*}
As $G(V)$ lies in the center of $V$, $A:=\langle G(V)\rangle$ is a commutative vertex subalgebra. 
Then $A$ is a commutative associative algebra
with $ab=a_{-1}b$ for $a,b\in A$ and with a derivation $\partial:={\mathcal{D}}$ such that
$$Y(a,x)b=(e^{x\partial}a)b.$$ 
For $g\in G(V)$, noticing that $\partial g=\phi(g)g$ we have
$$\frac{d}{dx}e^{x\partial}g= e^{x\partial}\partial g=e^{x\partial}(\phi(g)g)=(e^{x\partial}\phi(g))(e^{x\partial}g), $$
where
$$e^{x\partial}\phi(g)=\sum_{n\ge 0}\frac{1}{n!}x^n\partial^n(\phi(g))
=\frac{d}{dx}\sum_{n\ge 0}\frac{1}{n+1}x^{n+1}\frac{1}{n!}\partial^n(\phi(g))=\frac{d}{dx}\sum_{n\ge 1}\frac{1}{n}x^{n}\phi(g)_{-n}.$$
It follows that
$$Y(g,x)=e^{x\partial}g=g\exp \left(\sum_{n\ge 1}\frac{1}{n}x^n\phi(g)_{-n}\right)=E^{-}(\phi(g),x)g.$$
That is,
\begin{align}
Y(g,x)=E^{-}(\phi(g),x)\otimes g.
\end{align}
Furthermore, for any $v\in V$, as $[Y(g,x_1),Y(v,x_2)]=0$ we get
$$Y(v\otimes g,x)=Y(g_{-1}v,x)=Y(g,x)Y(v,x)=E^{-}(\phi(g),x)Y(v,x)\otimes g,$$
as desired.
\end{proof}

On the other hand, we have the following result: 

\begin{theo}\label{mixed-vla-group}
Let $V$ be a vertex algebra, $L$ an abelian semigroup with identity $0$, and 
$\phi:\ L\rightarrow V$ an additive map such that $\phi(L)$ lies in the center of $V$.
 For $v\in V,\  \alpha\in L$, define
\begin{align}
Y(v\otimes e^{\alpha},x)=E^{-}(\phi(\alpha),x)Y(v,x)\otimes e^{\alpha}
\end{align}
on $V\otimes \C[L]$. 
Then $V\otimes \C[L]$ is a vertex algebra with ${\bf 1}\otimes e^{0}$ as its vacuum vector,
which contains $V$ as a vertex subalgebra. Denote this vertex algebra by $V\otimes_{\phi} \C[L]$.
Furthermore, if $V$ is a (cocommutative) vertex bialgebra with $\phi(L)\subset P(V)$, 
then $V\otimes_{\phi} \C[L]$ equipped with the tensor product coalgebra structure
  is a (cocommutative) vertex bialgebra with 
\begin{align}
G(V\otimes_{\phi} \C[L])=G(V)\times L\  \  \text{and }\  P(V\otimes_{\phi} \C[L])=P(V).
\end{align}
\end{theo}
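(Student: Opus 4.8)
The plan is to verify that $V\otimes_{\phi}\C[L]$ satisfies the axioms of Definition \ref{def-VA} directly, and then to treat the bialgebra assertions separately. The vacuum, creation and truncation properties are immediate: since $\phi$ is additive we have $\phi(0)=0$, hence $E^{-}(\phi(0),x)=1$ and $Y({\bf 1}\otimes e^{0},x)=1$; the constant term of $E^{-}(\phi(\alpha),x)$ is $1$, which combined with the creation property in $V$ yields the creation property here; and $Y(u,x)v\in V((x))$ multiplied by $E^{-}(\phi(\alpha),x)\in(\operatorname{End}V)[[x]]$ stays in $V((x))$. The embedding $v\mapsto v\otimes e^{0}$ then exhibits $V$ as a vertex subalgebra. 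The real content is the Jacobi identity.

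For the Jacobi identity I would exploit that each $\phi(\alpha)$ is central, so $[Y(\phi(\alpha),x_1),Y(w,x_2)]=0$ for all $w$ by the Borcherds commutator formula; in particular the operators $\phi(\alpha)_{-n}$, $\phi(\beta)_{-m}$ commute with one another and with every $Y(u,x_1)$. Writing $A=u\otimes e^{\alpha}$, $B=v\otimes e^{\beta}$ and pulling the commuting factors $E^{-}(\phi(\alpha),x_1)E^{-}(\phi(\beta),x_2)$ out front, the left-hand side of the Jacobi identity collapses, via the Jacobi identity in $V$ for the pair $(u,v)$ and the delta-function substitution $x_1=x_0+x_2$ applied to $E^{-}(\phi(\alpha),x_1)$, to
\begin{align*}
x_2^{-1}\delta\!\left(\frac{x_1-x_0}{x_2}\right)E^{-}(\phi(\alpha),x_0+x_2)E^{-}(\phi(\beta),x_2)\,Y(Y(u,x_0)v,x_2)\otimes e^{\alpha+\beta}.
\end{align*}
Computing the right-hand side from $Y(A,x_0)B=E^{-}(\phi(\alpha),x_0)Y(u,x_0)v\otimes e^{\alpha+\beta}$ together with the additivity $\phi(\alpha+\beta)=\phi(\alpha)+\phi(\beta)$, the two sides agree provided one has the conjugation formula
\begin{align*}
Y(E^{-}(\phi(\alpha),x_0)Y(u,x_0)v,x_2)=E^{-}(\phi(\alpha),x_2)^{-1}E^{-}(\phi(\alpha),x_0+x_2)\,Y(Y(u,x_0)v,x_2).
\end{align*}

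Establishing this conjugation formula is the main obstacle. I would first prove, for central $a\in V$ and fixed $w\in V$, the identity $Y(E^{-}(a,z)w,x_2)=E^{-}(a,x_2)^{-1}E^{-}(a,z+x_2)\,Y(w,x_2)$ in a separate variable $z$, and then extend it coefficientwise with $w$ replaced by the series $Y(u,x_0)v$ and specialize $z=x_0$. For central $a$ one has the clean associativity $Y(Y(a,z)w,x_2)=Y(a,z+x_2)Y(w,x_2)$: here $Y(a,z)$ has only nonnegative powers of $z$, so there are no expansion ambiguities, and the formula follows from the Jacobi identity \eqref{Jacobi identity} together with $[Y(a,x_1),Y(w,x_2)]=0$. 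Viewing both sides of the desired identity as formal series $F(z)$ and $G(z)$ in $z$, they agree at $z=0$ (both equal $Y(w,x_2)$) and, using $\frac{d}{dz}E^{-}(a,z)=Y(a,z)E^{-}(a,z)$ and the clean associativity, satisfy the same first-order linear equation $\frac{d}{dz}F=Y(a,z+x_2)F$. Since such an equation has a unique formal solution with prescribed value at $z=0$, we conclude $F=G$. This ODE-uniqueness route is the cleanest and sidesteps the delta-function bookkeeping that a direct mode computation would require.

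For the bialgebra assertions I would put the tensor-product coalgebra structure on $V\otimes\C[L]$ and check that $\Delta$ and $\varepsilon$ are vertex algebra morphisms. The key point is that $\phi(\alpha)\in P(V)$ forces $\Delta_V(\phi(\alpha)_{-n}w)=(\phi(\alpha)_{-n}\otimes1+1\otimes\phi(\alpha)_{-n})\Delta_V(w)$, by Proposition \ref{va-P(V)} and the fact that $\Delta_V$ is a vertex algebra morphism; thus $\phi(\alpha)_{-n}$ acts as a derivation of the tensor coalgebra and $\Delta_V(E^{-}(\phi(\alpha),x)w)=(E^{-}(\phi(\alpha),x)\otimes E^{-}(\phi(\alpha),x))\Delta_V(w)$. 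Combining this with $\Delta_V(Y(u,x)v)=\sum Y(u^{(1)},x)v^{(1)}\otimes Y(u^{(2)},x)v^{(2)}$ and matching against $Y(\Delta A,x)\Delta B$ shows $\Delta$ is a morphism; the counit is handled similarly using $\varepsilon_V(\phi(\alpha))=0$. Cocommutativity is then inherited from $V$ and $\C[L]$. Finally, $G(V\otimes_{\phi}\C[L])=G(V)\times L$ and $P(V\otimes_{\phi}\C[L])=P(V)$ are purely coalgebra facts about tensor products, using that the group-likes of a tensor product are the products of group-likes, that $P(\C[L])=0$, and that the relevant identity group-like is ${\bf 1}\otimes e^{0}$; a short direct check with $gh=g_{-1}h$ confirms that this identification is compatible with the induced semigroup operation.
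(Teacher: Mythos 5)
Your proposal is correct and follows the paper's proof in all essentials: the same verification of the elementary axioms, the same reduction of the Jacobi identity to the conjugation formula $Y(E^{-}(a,x_0)w,x_2)=E^{-}(a,x_2+x_0)E^{-}(-a,x_2)Y(w,x_2)$ for a central element $a$, and the same arguments for the bialgebra structure and for the identification of $G$ and $P$. The only (harmless) deviation is that you establish the conjugation formula by a formal ODE-uniqueness argument in the auxiliary variable, whereas the paper computes $Y(A(x_0)w,x)=(A(x+x_0)-A(x))Y(w,x)$ directly from $a_{-n}w=\frac{1}{(n-1)!}(\mathcal{D}^{n-1}a)_{-1}w$ and then exponentiates; both rest on the same associativity for central elements.
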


\begin{proof} From definition, we have $Y({\bf 1}\otimes e^{0},x)=1$  as $\phi(0)=0$, and 
$$Y(v\otimes e^{\alpha},x)({\bf 1}\otimes e^{0})=E^{-}(\phi(\alpha),x)Y(v,x){\bf 1}\otimes e^{\alpha}\in (V\otimes \C[L])[[x]],$$
$$\lim_{x\rightarrow 0}Y(v\otimes e^{\alpha},x)({\bf 1}\otimes e^{0})=v\otimes e^{\alpha}.$$
For $u,v\in V,\ \alpha,\beta\in L$, we have
 \begin{align*}
 Y(u\otimes e^{\alpha},x_1)Y(v\otimes e^{\beta},x_2)
  =E^{-}(\phi(\alpha),x_1)E^{-}(\phi(\beta),x_2)Y(u,x_1)Y(v,x_2)\otimes e^{\alpha+\beta}.
 \end{align*}
Then 
\begin{eqnarray*}
&&x_0^{-1}\delta\left(\frac{x_1-x_2}{x_0}\right)Y(u\otimes e^{\alpha},x_1)Y(v\otimes e^{\beta},x_2)\\
&&\quad -x_0^{-1}\delta\left(\frac{x_2-x_1}{-x_0}\right)Y(v\otimes e^{\beta},x_2)Y(u\otimes e^{\alpha},x_1)\\
&=&x_1^{-1}\delta\left(\frac{x_2+x_0}{x_1}\right)E^{-}(\phi(\alpha),x_1)E^{-}(\phi(\beta),x_2) Y(Y(u,x_0)v,x_2)\otimes e^{\alpha+\beta}\\
&=&x_1^{-1}\delta\left(\frac{x_2+x_0}{x_1}\right)E^{-}(\phi(\alpha),x_2+x_0)E^{-}(\phi(\beta),x_2) Y(Y(u,x_0)v,x_2)\otimes e^{\alpha+\beta}.
\end{eqnarray*}
On the other hand, we have
\begin{eqnarray*}
&&x_1^{-1}\delta\left(\frac{x_2+x_0}{x_1}\right)Y\left(Y(u\otimes e^{\alpha},x_0)(v\otimes e^{\beta}),x_2\right)\\
&=&x_1^{-1}\delta\left(\frac{x_2+x_0}{x_1}\right)Y\left(E^{-}(\phi(\alpha),x_0)Y(u,x_0)v\otimes e^{\alpha+\beta},x_2\right)\\
&=&x_1^{-1}\delta\left(\frac{x_2+x_0}{x_1}\right)E^{-}(\phi(\alpha+\beta),x_2)
Y\left(E^{-}(\phi(\alpha),x_0)Y(u,x_0)v,x_2\right)\otimes e^{\alpha+\beta}\\ 
&=&x_1^{-1}\delta\left(\frac{x_2+x_0}{x_1}\right)E^{-}(\phi(\alpha),x_2)E^{-}(\phi(\beta),x_2)
Y\left(E^{-}(\phi(\alpha),x_0)Y(u,x_0)v,x_2\right)\otimes e^{\alpha+\beta}. 
\end{eqnarray*}
We see that the Jacobi identity follows if we can show
$$Y\left(E^{-}(\phi(\alpha),x_0)Y(u,x_0)v,x_2\right)=E^{-}(\phi(\alpha),x_2+x_0)E^{-}(-\phi(\alpha),x_2) Y(Y(u,x_0)v,x_2).$$

Let $a$ be a central element of $V$. Note that ${\mathcal{D}}^ra$  for all $r\ge 0$ are also central. Set 
$$A(x)=\sum_{n\ge 1}\frac{1}{n}a_{-n}x^{n}.$$
As $a_{m}=0$ for $m\ge 0$, we have
$$\frac{d}{dx}A(x)=\sum_{n\ge 1}a_{-n}x^{n-1}=Y(a,x).$$
Furthermore, for any $w\in V$ we have
\begin{eqnarray*}
&&Y(A(x_0)w,x)=\sum_{n\ge 1}\frac{1}{n}x_0^nY(a_{-n}w,x)=\sum_{n\ge 1}\frac{1}{n!}x_0^nY( ({\mathcal{D}}^{n-1}a)_{-1}w,x)\\
&=&\left(\sum_{n\ge 1}\frac{1}{n!}x_0^n\left(\frac{d}{dx}\right)^{n-1}Y(a,x)\right)Y(w,x)\\
&=&\left(\sum_{n\ge 1}\frac{1}{n!}x_0^n\left(\frac{d}{dx}\right)^{n}A(x)\right)Y(w,x)\\
&=&\left(e^{x_0\frac{d}{dx}}A(x)-A(x)\right)Y(w,x)\\
&=&(A(x+x_0)-A(x))Y(w,x).
\end{eqnarray*}
Then we obtain
\begin{align*}
Y(E^{-}(a,x_0)w,x_2)=E^{-}(a,x_2+x_0)E^{-}(-a,x_2)Y(w,x_2)
\end{align*}
as desired. This proves the first assertion. 

For the second assertion, assume that $V$ is a (cocommutative) vertex bialgebra. Then $V\otimes \C[L]$ is 
a (cocommutative) coalgebra. It remains to show that the coalgebra structure maps are vertex algebra morphisms.
Let $u,v\in V,\ \alpha,\beta\in L$. As $\phi(\alpha)\in P(V)$, we have $\varepsilon(\phi(\alpha))=0$, so that
$$\varepsilon(\phi(\alpha)_mw)=\delta_{m,-1}\varepsilon(\phi(\alpha))\varepsilon(w)=0$$
for any $w\in V,\ m\in \Z$.  Then
\begin{align*}
&\varepsilon(Y(u\otimes e^{\alpha},x)(v\otimes e^{\beta}))=\varepsilon\left(E^{-}(\phi(\alpha),x)Y(u,x)v\right) \varepsilon(e^{\alpha+\beta})
= \varepsilon(u)\varepsilon(v)\\
&\quad = \varepsilon(u\otimes e^{\alpha})\varepsilon(v\otimes e^{\beta}),
\end{align*}
noticing that $\varepsilon(e^{\gamma})=1$ for any $\gamma\in L$.
On the other hand, as 
$$\Delta(\phi(\alpha)_mw)=(\phi(\alpha)_m\otimes 1+1\otimes \phi(\alpha)_m)\Delta(w)$$
for any $w\in V,\ m\in \Z$, we have
\begin{align*}
&T^{23}\Delta(Y(u\otimes e^{\alpha},x)(v\otimes e^{\beta}))\\
=\ &\Delta\left(E^{-}(\phi(\alpha),x)Y(u,x)v\right)\otimes \Delta(e^{\alpha+\beta})\\
=\ & (E^{-}(\phi(\alpha),x)\otimes E^{-}(\phi(\alpha),x)) Y(\Delta(u),x)\Delta(v)\otimes  (e^{\alpha+\beta}\otimes e^{\alpha+\beta})\\
=\ & T^{23}Y(\Delta(u\otimes e^{\alpha}),x)\Delta(v\otimes e^{\beta}).
\end{align*}
It follows that $\varepsilon$ and $\Delta$ are vertex algebra morphisms. Therefore, $V\otimes_{\phi} \C[L]$ 
  is a (cocommutative) vertex bialgebra.
  
 For the last assertion, it is clear that $G(V)\times L\subset  G(V\otimes_{\phi} \C[L])$. Suppose $u\in G(V\otimes_{\phi} \C[L])$. 
 Write $u=\sum_{\alpha\in S}u_{\alpha}\otimes e^\alpha$, where $S$ is a finite subset of $L$ and 
 $u_{\alpha}$ for $\alpha\in S$ are nonzero vectors in $V$. We have
 \begin{align*}
 &T^{23}\Delta(u)=T^{23}(u\otimes u)=\sum_{\alpha,\beta\in S}u_\alpha \otimes u_\beta\otimes e^\alpha\otimes e^\beta,\\
 &T^{23}\Delta(u)=\sum_{\alpha\in S}\Delta(u_{\alpha})\otimes e^\alpha\otimes e^\alpha,
 \end{align*}
 which imply $u_{\alpha}\otimes u_{\beta}=0$ in $V\otimes V$ for $\alpha\ne \beta$. 
 Then $S$ must be a one-element set. Therefore, $u=u_\alpha\otimes e^\alpha$ for some $\alpha\in L$ and 
 $\Delta(u_\alpha)=u_\alpha\otimes u_\alpha$. This shows $u\in G(V)\times L$. 
 Therefore, we have $G(V\otimes_{\phi} \C[L])=G(V)\times L$. Similarly, we can show $P(V\otimes_{\phi} \C[L])=P(V)$.
\end{proof}

The following is a property of the vertex bialgebra $V\otimes_{\phi} \C[L]$: 

\begin{prop}\label{universal-V-phi-L}
Assume that $V, L, \phi$ are given as in Theorem \ref{mixed-vla-group} (with all the assumptions).
Let $U$ be any vertex bialgebra with a semigroup morphism $\psi_L:  L\rightarrow G(U)$ and 
a vertex bialgebra  morphism $\psi_V: V\rightarrow U$ such that 
\begin{align}
&[\psi_L(\alpha)_m,\psi_V(v)_n]=0\quad \text{  for }\alpha\in L,\ v\in V,\ m,n\in \Z,\\
&{\mathcal{D}}(\psi_L(\alpha))= \psi_L(\alpha)_{-1}\psi_V(\phi(\alpha)) \quad \text{ for }\alpha\in L.
\end{align}
Define a linear map $\Psi:  V\otimes \C[L]\rightarrow U$ by
$$\Psi(v\otimes e^{\alpha})=\psi_L(\alpha)_{-1}\psi_V(v)=\psi_V(v)_{-1}\psi_L(\alpha)
\quad \text{ for }v\in V,\ \alpha\in L.$$
Then $\Psi$ is a vertex bialgebra morphism from $V\otimes_{\phi} \C[L]$ to $U$.
\end{prop}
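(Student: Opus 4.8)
The plan is to verify that $\Psi$ is simultaneously a vertex algebra morphism and a coalgebra morphism. Throughout write $g_\alpha=\psi_L(\alpha)$, $p_\alpha=\psi_V(\phi(\alpha))$, and let $U'=\langle \psi_V(V)\cup\psi_L(L)\rangle$ be the vertex subalgebra of $U$ generated by the two images, so that $\Psi(V\otimes\C[L])\subseteq U'$. First I would record the well-definedness of $\Psi$: since $[Y(g_\alpha,x_1),Y(\psi_V(v),x_2)]=0$ forces $(g_\alpha)_n\psi_V(v)=0=\psi_V(v)_n g_\alpha$ for $n\ge 0$, the skew-symmetry $Y(u,x)w=e^{x\mathcal{D}}Y(w,-x)u$ yields $(g_\alpha)_{-1}\psi_V(v)=\psi_V(v)_{-1}g_\alpha$, so the two displayed formulas for $\Psi(v\otimes e^\alpha)$ agree. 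The key structural observation, on which everything rests, is that \emph{both} $g_\alpha$ and $p_\alpha$ lie in the center of $U'$ (i.e.\ their nonnegative modes annihilate $U'$). Indeed, the set of $\xi\in U$ with $[Y(g_\alpha,x_1),Y(\xi,x_2)]=0$ is a vertex subalgebra of $U$ containing $\psi_V(V)$ (hypothesis) and $\psi_L(L)$ (Proposition \ref{group-like-group}), hence contains $U'$; the same argument applies to $p_\alpha$, using that $\phi(\alpha)$ is central in $V$ together with the commuting hypothesis on $\psi_L,\psi_V$.

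For the coalgebra conditions I would use that $\Delta_U,\varepsilon_U$ are vertex algebra morphisms. Since $\varepsilon_U(a_{-1}b)=\varepsilon_U(a)\varepsilon_U(b)$ and $\varepsilon_U(g_\alpha)=1$, $\varepsilon_U(\psi_V(v))=\varepsilon(v)$, we get $\varepsilon_U\circ\Psi=\varepsilon$. For comultiplication, $\Delta_U(\Psi(v\otimes e^\alpha))=(\Delta_U g_\alpha)_{-1}\Delta_U(\psi_V(v))=(g_\alpha\otimes g_\alpha)_{-1}\sum \psi_V(v_{(1)})\otimes\psi_V(v_{(2)})$, using that $g_\alpha$ is group-like and $\psi_V$ is a coalgebra morphism. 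The centrality of $g_\alpha$ in $U'$ collapses the mode sum defining $(g_\alpha\otimes g_\alpha)_{-1}$ on $U'\otimes U'$ to the single term $(g_\alpha)_{-1}\otimes (g_\alpha)_{-1}$, which produces exactly $(\Psi\otimes\Psi)\Delta(v\otimes e^\alpha)$.

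The heart of the proof is the vertex algebra morphism property. Expanding the left side of $\Psi(Y(u\otimes e^\alpha,x)(v\otimes e^\beta))=Y(\Psi(u\otimes e^\alpha),x)\Psi(v\otimes e^\beta)$ by the definition of $Y$ in $V\otimes_\phi\C[L]$ and using that $\psi_V$ is a vertex algebra morphism, the claim reduces to the single identity
\begin{align*}
Y\big((g_\alpha)_{-1}\psi_V(u),x\big)\big((g_\beta)_{-1}\psi_V(v)\big)
=(g_{\alpha+\beta})_{-1}\,E^{-}(p_\alpha,x)\,Y(\psi_V(u),x)\psi_V(v).
\end{align*}
I would prove this in two stages. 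The algebraic stage moves all group-like factors outside: by the iterate identity $Y(u_{-1}w,x)=Y(u,x)Y(w,x)$, valid whenever $[Y(u,x_1),Y(w,x_2)]=0$ (as used in Theorem \ref{thm-second}), together with the commutations coming from centrality and from the group-likes commuting, the left side becomes $(g_\beta)_{-1}\,Y(g_\alpha,x)\big(Y(\psi_V(u),x)\psi_V(v)\big)$. The analytic stage is the crucial operator formula
\begin{align*}
Y(g_\alpha,x)=E^{-}(p_\alpha,x)\,(g_\alpha)_{-1}\quad\text{as operators on }U',
\end{align*}
after which $(g_\alpha)_{-1}$ can be pulled past $E^{-}(p_\alpha,x)$ and $Y(\psi_V(u),x)$ (centrality) and recombined via $(g_\beta)_{-1}(g_\alpha)_{-1}=(g_{\alpha+\beta})_{-1}$ (the semigroup law $gh=g_{-1}h$ of Proposition \ref{group-like-group}) to give the right-hand side. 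To establish the operator formula I would observe that $F(x):=Y(g_\alpha,x)$ and $G(x):=E^{-}(p_\alpha,x)(g_\alpha)_{-1}$ agree at $x=0$ (both equal $(g_\alpha)_{-1}$) and satisfy the same equation $\tfrac{d}{dx}F=Y(p_\alpha,x)^{-}F$, where $Y(p_\alpha,x)^{-}=\sum_{n\ge 1}(p_\alpha)_{-n}x^{n-1}$: for $F$ this follows from $\mathcal{D}g_\alpha=(g_\alpha)_{-1}p_\alpha$, the iterate identity, and the centrality of $p_\alpha$ in $U'$ (which forces $Y(p_\alpha,x)^{+}=0$ on $U'$, so that $Y(p_\alpha,x)$ may be replaced by $Y(p_\alpha,x)^{-}$ and commuted past $F$); for $G$ it is immediate from $\tfrac{d}{dx}E^{-}(p_\alpha,x)=Y(p_\alpha,x)^{-}E^{-}(p_\alpha,x)$. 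Since $Y(p_\alpha,x)^{-}$ has only nonnegative powers of $x$, such an equation has a unique formal power series solution with prescribed constant term, whence $F=G$. This is precisely the generating-function computation carried out at the ends of the proofs of Theorems \ref{thm-second} and \ref{mixed-vla-group}.

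I expect the analytic stage — the operator formula for $Y(g_\alpha,x)$ — to be the main obstacle, and within it the decisive point is the \emph{centrality of $p_\alpha=\psi_V(\phi(\alpha))$ in $U'$}. Without it, $Y(p_\alpha,x)^{+}$ would not vanish on $U'$, the two sides of the governing differential equation would disagree, and the exponential formula would simply be false (as one already sees for a non-central primitive element such as a Heisenberg generator). Checking that the hypothesis ``$\phi(L)$ central in $V$'' of Theorem \ref{mixed-vla-group} is correctly transported through the morphism $\psi_V$ and combined with the commuting hypothesis on $\psi_L,\psi_V$ is therefore where I would concentrate the care; once this centrality is secured, the remaining work is the routine mode bookkeeping and commutations indicated above.
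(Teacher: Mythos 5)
Your proposal is correct and follows essentially the same route as the paper's proof: verify the coalgebra conditions directly using that $\Delta_U$, $\varepsilon_U$ are vertex algebra morphisms and that $\psi_L(\alpha)$ is group-like and central on the relevant subalgebra, then reduce the vertex algebra morphism property to the operator identity $Y(\psi_L(\alpha),x)=E^{-}(\psi_V(\phi(\alpha)),x)\psi_L(\alpha)_{-1}$ combined with the iterate identity and the relation $\psi_L(\alpha)_{-1}\psi_L(\beta)_{-1}=\psi_L(\alpha+\beta)_{-1}$. The additional justifications you supply (well-definedness of $\Psi$ via skew-symmetry, centrality of $\psi_V(\phi(\alpha))$ in the generated subalgebra, and the ODE-uniqueness derivation of the exponential formula) are correct refinements of steps the paper handles more briefly by appeal to the computation in Theorem \ref{thm-second}.
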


\begin{proof}  For  $v\in V,\ \alpha\in L$, we have 
\begin{align*}
\varepsilon \Psi(v\otimes e^{\alpha})=\varepsilon(\psi_L(\alpha)_{-1}\psi_V(v))
=\varepsilon(\psi_L(\alpha))\varepsilon(\psi_V(v))=\varepsilon(v)=\varepsilon(v)\varepsilon (e^{\alpha})
=\varepsilon (v\otimes e^{\alpha})
\end{align*}
and
\begin{align*}
&\Delta \Psi(v\otimes e^{\alpha})=  \Delta(\psi_L(\alpha)_{-1}\psi_V(v))
= (\Delta \psi_L(\alpha))_{-1}\Delta\psi_V(v)\\
&=  (\psi_L(\alpha)_{-1}\otimes \psi_L(\alpha)_{-1})(\psi_V\otimes \psi_V)\Delta(v)
= (\Psi\otimes \Psi)\Delta(v\otimes e^{\alpha}).
\end{align*}
Then $\Psi$ is a coalgebra morphism. Next, we show that $\Psi$ is also a vertex algebra morphism. 
Note that
$$\Psi ({\bf 1}\otimes e^{0})=(\psi_L(0))_{-1}\psi_V({\bf 1})={\bf 1}_{-1}{\bf 1}={\bf 1}\in U.$$
Let $u,v\in V,\ \alpha,\beta\in L$. We have
\begin{align*}
&\Psi\left(Y(u\otimes e^{\alpha})(v\otimes e^{\beta})\right)\\
=& \Psi\left(E^{-}(\phi(\alpha),x)Y(u,x)v\otimes e^{\alpha+\beta}\right) \\
=& (\psi_L(\alpha+\beta))_{-1}\psi_V\left(E^{-}(\phi(\alpha),x)Y(u,x)v\right)\\
=&(\psi_L(\alpha+\beta))_{-1}E^{-}(\psi_V(\phi(\alpha)),x)\psi_V\left(Y(u,x)v\right)
\end{align*}
and
\begin{align*}
Y(\Psi(u\otimes e^{\alpha}),x)\Psi(v\otimes e^{\beta})
=Y\left(\psi_L(\alpha)_{-1}\psi_V(v),x\right)\psi_L(\beta)_{-1}\psi_V(v).
\end{align*}
Since $[\psi_L(\alpha)_{m},\psi_V(u)_n]=0$ for all $m,n\in \Z$, we get
$$Y\left(\psi_L(\alpha)_{-1}\psi_V(u),x\right)=Y(\psi_L(\alpha),x)Y(\psi_V(u),x).$$
With ${\mathcal{D}}(\psi_L(\alpha))= \psi_L(\alpha)_{-1}\psi_V(\phi(\alpha))$ we have
\begin{align*}
&Y(\psi_L(\alpha),x)=E^{-}(\psi_V(\phi(\alpha)),x)\psi_L(\alpha)_{-1},\\
&\psi_L(\alpha)_{-1}\psi_L(\beta)_{-1}=\psi_L(\alpha+\beta)_{-1}
\end{align*}
on $\psi_V(V)$. Then
\begin{align*}
Y(\Psi(u\otimes e^{\alpha}),x)\Psi(v\otimes e^{\beta})& =Y(\psi_L(\alpha),x)Y(\psi_V(u),x)\psi_L(\beta)_{-1}\psi_V(v)\\
&=Y(\psi_L(\alpha),x)\psi_L(\beta)_{-1}Y(\psi_V(u),x)\psi_V(v)\\
&=E^{-}(\psi_V(\phi(\alpha)),x)\psi_L(\alpha)_{-1}\psi_L(\beta)_{-1}\psi_V(Y(u,x)v)\\
&=E^{-}(\psi_V(\phi(\alpha)),x)\psi_L(\alpha+\beta)_{-1}\psi_V(Y(u,x)v).
\end{align*}
It follows that $\Psi$ is indeed a vertex algebra morphism. 
Therefore,  $\Psi$ is a vertex bialgebra morphism.
\end{proof}

For any subsets $A$ and $B$ of a vertex algebra $V$, set
\begin{align}
C_{A}(B)=\{ b\in B\ |\  [Y(b,x_1),Y(a,x_2)]=0\  \text{ for all }a\in A\}.
\end{align}
As an immediate consequence of Proposition \ref{universal-V-phi-L}, we have: 

\begin{coro}\label{morphism-image}
 Let $V$ be a vertex bialgebra satisfying the conditions that $G(V)\subset C_V(V)$ and that
 there exists a map $\phi: G(V)\rightarrow C_{P(V)}(P(V))$ such that 
 \begin{align}
 & \phi(gh)=\phi(g)+\phi(h)\quad \text{  for }g,h\in G(V),\\
&{\mathcal{D}}(g)=g_{-1}\phi(g)\quad \text{  for } g\in G(V).
 \end{align}
 Then there exists a vertex bialgebra morphism $\Psi: {\mathcal{V}}_{P(V)}\otimes_{\phi} \C[G(V)]\rightarrow V$
 with $\Psi|_{P(V)+\C[G(V)]}=1$.
 Furthermore,  if $V$ as a vertex algebra is generated by $P(V)+\C [G(V)]$, $\Psi$ is surjective.
 \end{coro}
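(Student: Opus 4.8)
The plan is to deduce the statement directly from Proposition \ref{universal-V-phi-L} by specializing the data in that proposition to the present situation. Concretely, I would play the role of the vertex bialgebra ``$V$'' in Proposition \ref{universal-V-phi-L} with $\mathcal{V}_{P(V)}$, the role of the abelian semigroup ``$L$'' with $G(V)$ (with the operation $gh=g_{-1}h$ and identity ${\bf 1}$ from Proposition \ref{group-like-group}), and the role of the additive map ``$\phi$'' with the given $\phi: G(V)\to C_{P(V)}(P(V))\subset P(V)$, using the identification $P(\mathcal{V}_{P(V)})=P(V)$ from Proposition \ref{VC-facts}. Before this is even meaningful I must check that $\mathcal{V}_{P(V)}\otimes_{\phi}\C[G(V)]$ is defined, i.e.\ that the hypotheses of Theorem \ref{mixed-vla-group} hold for this data; the only nontrivial point is that $\phi(G(V))$ lies in the \emph{center} of $\mathcal{V}_{P(V)}$. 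This is exactly the place where the hypothesis $\phi(g)\in C_{P(V)}(P(V))$, rather than merely $\phi(g)\in P(V)$, is used: for fixed $g\in G(V)$ the set of elements of $\mathcal{V}_{P(V)}$ commuting with $\phi(g)$ is a vertex subalgebra which by assumption contains $P(V)$, and since $\mathcal{V}_{P(V)}$ is generated by $P(V)$ (Proposition \ref{3.7}) this subalgebra is all of $\mathcal{V}_{P(V)}$.

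Next I would supply the two morphisms required by Proposition \ref{universal-V-phi-L}, with target $U=V$. For $\psi_L: G(V)\to G(V)=G(U)$ I take the identity map, which is a semigroup morphism. For $\psi_V:\mathcal{V}_{P(V)}\to V$ I take the canonical vertex algebra morphism $\psi^V$ with $\psi^V|_{P(V)}=1$. Here I must verify that $\psi^V$ is in fact a vertex \emph{bialgebra} morphism; this is checked exactly as in the proof of Theorem \ref{main-result1}, by showing that $W=\{v\in\mathcal{V}_{P(V)} : (\psi^V\otimes\psi^V)\Delta(v)=\Delta\psi^V(v),\ \varepsilon\psi^V(v)=\varepsilon(v)\}$ is a vertex subalgebra containing $P(V)$ and hence equals $\mathcal{V}_{P(V)}$, an argument that does not use cocommutativity of $V$.

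With these choices the two compatibility conditions of Proposition \ref{universal-V-phi-L} become precisely the hypotheses of the corollary. Indeed $[\psi_L(\alpha)_m,\psi_V(v)_n]=0$ for all $m,n$ follows from $G(V)\subset C_V(V)$, since $\psi_L(\alpha)=\alpha$ is central in $V$; and $\mathcal{D}(\psi_L(\alpha))=\psi_L(\alpha)_{-1}\psi_V(\phi(\alpha))$ reduces, via $\psi^V|_{P(V)}=1$ so that $\psi_V(\phi(\alpha))=\phi(\alpha)$, to the assumed identity $\mathcal{D}(g)=g_{-1}\phi(g)$. Proposition \ref{universal-V-phi-L} then produces a vertex bialgebra morphism $\Psi:\mathcal{V}_{P(V)}\otimes_{\phi}\C[G(V)]\to V$ with $\Psi(v\otimes e^{\alpha})=\alpha_{-1}\psi^V(v)$. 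Evaluating gives $\Psi(v\otimes e^{\bf 1})={\bf 1}_{-1}\psi^V(v)=v$ for $v\in P(V)$ and $\Psi({\bf 1}\otimes e^{\alpha})=\alpha_{-1}{\bf 1}=\alpha$ for $\alpha\in G(V)$, so $\Psi|_{P(V)+\C[G(V)]}=1$. For the surjectivity claim, the image of $\Psi$ is a vertex subalgebra of $V$ containing $P(V)+\C[G(V)]$; if $V$ is generated by $P(V)+\C[G(V)]$, this forces $\Psi$ to be onto.

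Since the only genuinely new verifications are light---the centrality of $\phi(G(V))$ in $\mathcal{V}_{P(V)}$ and the fact that $\psi^V$ respects the coalgebra structure---I expect the main (and only minor) obstacle to be organizational: correctly matching the notation of Theorem \ref{mixed-vla-group} and Proposition \ref{universal-V-phi-L}, where the map $\phi$ takes values in the ambient vertex algebra, with the present setting, where $\phi$ lands in $P(V)=P(\mathcal{V}_{P(V)})$ and the two occurrences of $\phi$ must be identified through $\psi^V$, while keeping the identity element ${\bf 1}\in G(V)$, its exponential $e^{\bf 1}=e^{0}$, and the vacuum vector carefully distinguished.
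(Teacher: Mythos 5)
Your proposal is correct and follows exactly the route the paper intends: the corollary is stated there as an immediate consequence of Proposition \ref{universal-V-phi-L}, applied with $\mathcal{V}_{P(V)}$, $L=G(V)$, $\psi_L=\mathrm{id}$, and $\psi_V=\psi^V$, and your verifications (centrality of $\phi(G(V))$ in $\mathcal{V}_{P(V)}$ via generation by $P(V)$, and that $\psi^V$ is a coalgebra morphism as in Theorem \ref{main-result1}) are precisely the details the paper leaves implicit.
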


As a special case of Theorem \ref{mixed-vla-group} we have: 

\begin{prop}\label{L-C-vba}
Let $C$ be a vertex Lie algebra, $L$ an abelian semigroup with identity $0$, 
and let $\phi: L\rightarrow C$ be an additive map such that
\begin{align}
\phi(\alpha)_{n}b=0
\quad \text{ for }\alpha\in L,\ b\in C,\ n\in \N.
\end{align}
Set
\begin{align}
{\mathcal{V}}(C,L,\phi)={\mathcal{V}}_C\otimes \C[L],
\end{align}
equipped with the tensor product coalgebra structure and the vertex algebra structure obtained in Proposition \ref{mixed-vla-group}.
Then  ${\mathcal{V}}(C,L,\phi)$ is a cocommutative vertex bialgebra with 
\begin{align}
G({\mathcal{V}}(C,L,\phi))=L\  \  \text{and }\  P({\mathcal{V}}(C,L,\phi))=C.
\end{align}
\end{prop}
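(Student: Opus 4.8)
The plan is to realize ${\mathcal{V}}(C,L,\phi)$ as a special instance of the construction in Theorem \ref{mixed-vla-group}, taking the ambient vertex algebra there to be $V={\mathcal{V}}_C$. Recall that by Proposition \ref{3.7} the vertex algebra ${\mathcal{V}}_C$ is generated by $C$, while Propositions \ref{3.8} and \ref{VC-facts} show it is a cocommutative connected vertex bialgebra with $P({\mathcal{V}}_C)=C$. Identifying $C$ with its image $\{a(-1){\bf 1}\mid a\in C\}$ in ${\mathcal{V}}_C$, I regard $\phi$ as a map $L\to C\subset {\mathcal{V}}_C$. Of the hypotheses of Theorem \ref{mixed-vla-group}, additivity of $\phi$ is assumed and the inclusion $\phi(L)\subset P({\mathcal{V}}_C)=C$ is immediate; the one substantive thing to check is that $\phi(L)$ lies in the center of ${\mathcal{V}}_C$, and I expect this to be the main obstacle.

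To establish centrality I would fix $\alpha\in L$, write $a=\phi(\alpha)\in C$, and first note that, since $C$ is a vertex Lie subalgebra of ${\mathcal{V}}_C$, the hypothesis $a_nb=0$ ($b\in C$, $n\in\N$) is an identity for the vertex algebra products as well. The Borcherds commutator formula (Remark \ref{ex-0-add}) then yields
\begin{align*}
[Y(a,x_1),Y(b,x_2)]=\sum_{j\ge 0}Y(a_jb,x_2)\frac{1}{j!}\left(\frac{\partial}{\partial x_2}\right)^jx_1^{-1}\delta\left(\frac{x_2}{x_1}\right)=0
\end{align*}
for all $b\in C$. Hence the commutant $K=\{w\in {\mathcal{V}}_C\mid [Y(a,x_1),Y(w,x_2)]=0\}=C_{\{a\}}({\mathcal{V}}_C)$ contains $C$. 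The key point is that $K$ is a vertex subalgebra of ${\mathcal{V}}_C$ — this is the standard commutant construction, with $K$ containing ${\bf 1}$, being stable under ${\mathcal{D}}$, and being closed under all products $u_nv$ with $u,v\in K$. Since ${\mathcal{V}}_C$ is generated by $C$, it follows that $K={\mathcal{V}}_C$; that is, $a$ commutes with every element of ${\mathcal{V}}_C$ and so is central. This is exactly how the weaker hypothesis $\phi(\alpha)_nb=0$ on the generating subspace $C$ propagates to genuine centrality.

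With all hypotheses verified, Theorem \ref{mixed-vla-group} applies and shows that ${\mathcal{V}}(C,L,\phi)={\mathcal{V}}_C\otimes_\phi\C[L]$ is a cocommutative vertex bialgebra with $G({\mathcal{V}}(C,L,\phi))=G({\mathcal{V}}_C)\times L$ and $P({\mathcal{V}}(C,L,\phi))=P({\mathcal{V}}_C)=C$. To finish I would invoke Proposition \ref{VC-facts}: since ${\mathcal{V}}_C$ is connected it has a unique group-like element, namely ${\bf 1}$, so $G({\mathcal{V}}_C)=\{{\bf 1}\}$, and the canonical identification $\{{\bf 1}\}\times L\cong L$ gives $G({\mathcal{V}}(C,L,\phi))=L$, as claimed. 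Thus essentially all the content of the statement is contained in Theorem \ref{mixed-vla-group}; the only genuine work is the centrality verification in the second paragraph.
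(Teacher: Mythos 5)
Your proposal is correct and takes essentially the same route as the paper: everything is reduced to Theorem \ref{mixed-vla-group}, with the only substantive point being that $\phi(L)$ is central in ${\mathcal{V}}_C$, which both arguments deduce from $\phi(\alpha)_j b=0$ on the generating subspace $C$. The paper gets centrality by noting $[\phi(\alpha)(m),b(n)]=0$ in ${\mathcal{L}}_C$ and using ${\mathcal{V}}_C=U({\mathcal{L}}_C^-){\bf 1}$, whereas you use the commutant-is-a-vertex-subalgebra argument plus generation by $C$ --- a cosmetic difference.
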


\begin{proof} Let $\alpha\in L$. As $\phi(\alpha)_ib=0$ for $b\in C,\ i\in \N$, we have 
$[\phi(\alpha)(m),b(n)]=0$ in ${\mathcal{L}}_C$ for all $m,n\in \Z$. 
It follows that $\phi(\alpha)$ lies in the center of the associated vertex algebra ${\mathcal{V}}_C$. Then 
by Proposition \ref{mixed-vla-group},  ${\mathcal{V}}(C,L,\phi)$ is a cocommutative vertex bialgebra.
The rest is clear. 
\end{proof}

As another special case of Theorem \ref{mixed-vla-group}, we immediately have:

\begin{coro}\label{A-CL}
Let $L$ be an abelian semigroup with identity $0$ and let $A$ be any commutative differential algebra 
with an additive map $\phi: L\rightarrow A$.
Then $A\otimes \C[L]$ is a vertex algebra with
\begin{align}
Y(a\otimes e^{\alpha})=E^{-}(\phi(\alpha),x)e^{x\partial}a\otimes e^{\alpha}\quad \text{ for }a\in A,\ \alpha\in L,
\end{align}
where
$$E^{-}(\phi(\alpha),x)=\exp\left(\sum_{n\ge1}\frac{1}{n!}(\partial^{n-1} \phi(a))x^n\right). $$
\end{coro}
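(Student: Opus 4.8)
The plan is to read this off as the specialization of Theorem \ref{mixed-vla-group} in which the ambient vertex algebra is the one attached to the commutative differential algebra $(A,\pa)$ through Borcherds' construction. First I would invoke Example \ref{Borch-constru} to regard $A$ as a vertex algebra with vacuum $1$ and vertex operator map $Y(a,x)b=(e^{x\pa}a)b=\sum_{n\ge 0}\frac{1}{n!}(\pa^n a)b\,x^n$ for $a,b\in A$. Reading off components, this says $a_{-n-1}$ acts as multiplication by $\frac{1}{n!}\pa^n a$ for $n\in\N$, while $a_m=0$ for every $m\ge 0$ and every $a\in A$.

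The only hypothesis of Theorem \ref{mixed-vla-group} that has to be verified is that $\phi(L)$ lies in the center of $A$ as a vertex algebra. But the observation just made, namely that $a_m=0$ for all $m\ge 0$ and all $a\in A$, is precisely the statement that \emph{every} element of $A$ is central. Hence $\phi(\alpha)$ is automatically central for each $\alpha\in L$, and the centrality condition holds for an arbitrary additive map $\phi\colon L\to A$. Therefore Theorem \ref{mixed-vla-group} applies and equips $A\otimes\C[L]$ with a vertex algebra structure having vacuum ${\bf 1}\otimes e^{0}$ and vertex operator map $Y(a\otimes e^{\alpha},x)=E^{-}(\phi(\alpha),x)\,Y(a,x)\otimes e^{\alpha}$.

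It then remains to rewrite the two factors $Y(a,x)$ and $E^{-}(\phi(\alpha),x)$ explicitly in the differential-algebra language. The first is immediate, since $Y(a,x)=e^{x\pa}a$ acts by multiplication. For the second I would use the component formula above: for $n\ge 1$ the operator $\phi(\alpha)_{-n}$ is multiplication by $\frac{1}{(n-1)!}\pa^{n-1}\phi(\alpha)$, so that
$$\sum_{n\ge 1}\frac{1}{n}\phi(\alpha)_{-n}x^{n}=\sum_{n\ge 1}\frac{1}{n!}\bigl(\pa^{n-1}\phi(\alpha)\bigr)x^{n}$$
as multiplication operators. Substituting this into the definition \eqref{E-minus} of $E^{-}$ produces exactly the displayed expression for $E^{-}(\phi(\alpha),x)$, and combining it with $Y(a,x)=e^{x\pa}a$ recovers the asserted formula for $Y(a\otimes e^{\alpha},x)$. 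Beyond this routine translation there is essentially no obstacle; the one point meriting a moment's care is recognizing that commutativity of $A$ makes the centrality hypothesis of Theorem \ref{mixed-vla-group} automatic, which is exactly what allows the corollary to hold for an unrestricted additive $\phi$.
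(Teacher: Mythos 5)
Your proposal is correct and matches the paper's intent exactly: the paper offers no separate argument, stating only that the corollary follows immediately from Theorem \ref{mixed-vla-group} applied to the Borcherds vertex algebra of $(A,\partial)$, where commutativity makes every element central. Your component computation translating $E^{-}(\phi(\alpha),x)$ and $Y(a,x)$ into the differential-algebra form is the right (and only) bookkeeping needed.
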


As an immediate consequence of Theorem \ref{thm-second}, we have: 

\begin{coro}\label{B-factorization}
Let $B$ be a cocommutative and commutative differential bialgebra such that $G(B)$ is a group. Then  
\begin{align} 
B\simeq S(P(B))\otimes_{\phi} \C[G(B)],
\end{align}
where $\phi: G(B)\rightarrow P(B)$ with $\phi(g)=g^{-1}\partial (g)$ for $g\in G(B)$.
\end{coro}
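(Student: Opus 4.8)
The plan is to read this off Theorem \ref{thm-second} by specializing to the commutative world, where (by Remark \ref{vba-dba}) a cocommutative and commutative differential bialgebra is the same datum as a commutative cocommutative vertex bialgebra.

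First I would view $B$ as a vertex bialgebra via Borcherds' construction, so that $Y(a,x)b=(e^{x\partial}a)b$ and ${\mathcal D}=\partial$. Since $B$ is commutative as an algebra, the resulting vertex algebra is commutative; concretely $a_nb=0$ for all $a,b\in B$ and all $n\ge 0$, whence $[Y(a,x_1),Y(b,x_2)]=0$ for every $a,b\in B$. In particular every element of $B$ is central, so in particular $G(B)$ lies in the center of $B$ as a vertex algebra. Combined with the hypotheses that $B$ is cocommutative and that $G(B)$ is a group, this places us exactly in the situation of Theorem \ref{thm-second}.

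Applying Theorem \ref{thm-second} then gives $B={\mathcal V}_{P(B)}\otimes \C[G(B)]$ as a coalgebra, with $Y(v\otimes e^{\alpha},x)=E^{-}(\phi(\alpha),x)Y(v,x)\otimes e^{\alpha}$; this is precisely the vertex bialgebra ${\mathcal V}_{P(B)}\otimes_\phi\C[G(B)]$ of Theorem \ref{mixed-vla-group}, with $\phi(g)=(g^{-1})_{-1}{\mathcal D}(g)$. In the differential-algebra language, using $a_{-1}b=ab$ and ${\mathcal D}=\partial$, this reads $\phi(g)=g^{-1}\partial(g)$, matching the statement. It then only remains to identify ${\mathcal V}_{P(B)}$ with $S(P(B))$. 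The key observation is that commutativity of $B$ forces $a_nb=0$ for $n\ge 0$ and hence $Y^{-}(a,x)b=0$ for all $a,b\in P(B)$, so $P(B)$ is an \emph{abelian} vertex Lie algebra carrying only the operator ${\mathcal D}$. Therefore ${\mathcal L}_{P(B)}$ is abelian and ${\mathcal L}_{P(B)}^-=P(B)$, so $U({\mathcal L}_{P(B)}^-)=S(P(B))$. By Proposition \ref{VC-facts} the map $\eta\colon U({\mathcal L}_{P(B)}^-)\to{\mathcal V}_{P(B)}$ is a coalgebra isomorphism; and since the generators $P(B)$ of ${\mathcal V}_{P(B)}$ have pairwise commuting vertex operators, ${\mathcal V}_{P(B)}$ is itself commutative, with product $a_{-1}b$ and derivation ${\mathcal D}$. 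Thus $\eta$ is an isomorphism of commutative differential bialgebras $S(P(B))\simeq{\mathcal V}_{P(B)}$, the identity on $P(B)$, and substituting into the factorization yields $B\simeq S(P(B))\otimes_\phi\C[G(B)]$.

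The only point that needs genuine care is this final identification: one must verify that $\eta$ respects not merely the coalgebra structure (which is Proposition \ref{VC-facts}) but also the commutative multiplication and the derivation, i.e. that ${\mathcal V}_{P(B)}$ is $S(P(B))$ \emph{as a differential bialgebra}. This rests on the Borcherds dictionary between commutative vertex algebras and commutative differential algebras together with the abelianness of $P(B)$, and presents no essential difficulty; everything else is a direct transfer of Theorem \ref{thm-second} through the equivalence of Remark \ref{vba-dba}.
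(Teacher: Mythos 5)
Your proposal is correct and follows exactly the route the paper intends: the corollary is stated there as an immediate consequence of Theorem \ref{thm-second}, obtained by viewing $B$ as a commutative vertex bialgebra via Borcherds' construction (so every element, in particular $G(B)$, is central) and then specializing. Your extra step identifying ${\mathcal V}_{P(B)}$ with $S(P(B))$ as a differential bialgebra --- using that commutativity of $B$ makes $P(B)$ an abelian vertex Lie algebra, so that ${\mathcal L}_{P(B)}^-$ is abelian and $\eta\colon U({\mathcal L}_{P(B)}^-)\to{\mathcal V}_{P(B)}$ from Proposition \ref{VC-facts} respects the product $a_{-1}b$ and the derivation ${\mathcal D}=\partial$ --- is precisely the detail the paper leaves implicit, and you handle it correctly.
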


\section{Differential bialgebras associated to abelian semigroups}

In this section, we study cocommutative and commutative differential bialgebras, 
which determine all commutative and cocommutative vertex bialgebras through Borcherds' construction.
More specifically, we associate a cocommutative and commutative differential bialgebra to any abelian semigroup $L$ with identity 
and we give a universal property and a characterization of $B_L$.

We begin by recalling the differential bialgebra associated to an abelian group, 
which is rooted in the construction of lattice vertex algebras.
Let $L$ be any (additive) abelian group. The group algebra $\C[L]$ is known to be a Hopf algebra.
On the other hand, set $\mathfrak{h}=\C\otimes_{\Z}L$, a vector space, and set
\begin{align}
\bar{\alpha}=1\otimes \alpha\in \mathfrak{h}\quad  \text{for }\alpha\in L.
\end{align}
Furthermore, set $\widehat{\mathfrak{h}}^{-}=\mathfrak{h}\otimes t^{-1}\C[t^{-1}]$.
View $\widehat{\mathfrak{h}}^{-}$ as an abelian Lie algebra, so the universal enveloping algebra $U(\widehat{\mathfrak{h}}^{-})$
coincides with the symmetric algebra $S(\widehat{\mathfrak{h}}^{-})$.
Set
\begin{align}
B_{L}=S(\widehat{\mathfrak{h}}^{-})\otimes \C[L],
\end{align}
which is naturally a Hopf algebra, in particular, a bialgebra.
As an associative algebra, $B_{L}$ admits a derivation $\partial$ which is uniquely determined  by
\begin{align}\label{pa-de-}
\partial e^{\alpha}=\bar{\alpha}(-1)\otimes e^{\alpha},\quad \partial a(-n)=na(-n-1)
\quad \text{ for }\alpha\in L,\ a\in \mathfrak{h},\ n\in \Z_{+},
\end{align}
where $a(-n)=a\otimes t^{-n}$. This makes $B_L$ a differential algebra.
Furthermore, $B_L$ is a cocommutative (and commutative) differential bialgebra  (see \cite{L}).

\begin{remark}\label{BL-property}
{\em Note that as a cocommutative coalgebra over $\mathbb C$, $B_L$ is pointed.
It is straightforward (by using $\Delta$ and $\varepsilon$)  to show that $G(B_L)=L$ and $P(B_L)=\widehat{\mathfrak{h}}^{-}$.
 From (\ref{pa-de-}), it can be readily seen that $B_L$  as a differential algebra is generated by the subbialgebra $\C[L]$.}
\end{remark}


The following is a universal property of the differential bialgebra $B_{L}$, extending the 
universal property of $B_{L}$ as a differential algebra in \cite{L}: 

\begin{prop}\label{BL bialgebra universal property}
Assume that $L$ is an abelian group.
Let $A$ be any commutative differential bialgebra and let $\psi: \mathbb{C}[L]\rightarrow A$ be a bialgebra morphism. 
Then there exists a unique differential bialgebra morphism  $\widetilde{\psi}: B_L\rightarrow A$ with
$\widetilde{\psi}|_{\mathbb{C}[L]}=\psi$.
\end{prop}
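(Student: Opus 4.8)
The plan is to first produce $\widetilde\psi$ as a differential \emph{algebra} morphism and then to verify that it is automatically a coalgebra morphism, so that it is a differential bialgebra morphism. For the first step I would invoke the universal property of $B_L$ as a differential algebra established in \cite{L}: since $L$ is a group, each $e^\alpha$ is a unit of $\C[L]$, hence $\psi(e^\alpha)$ is a unit of $A$, and $\psi$ extends uniquely to a differential algebra morphism $\widetilde\psi: B_L\to A$ with $\widetilde\psi|_{\C[L]}=\psi$. Uniqueness at the bialgebra level is then immediate, because any differential bialgebra morphism extending $\psi$ is in particular a differential algebra morphism extending $\psi$ and so must equal $\widetilde\psi$.

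It remains to show that $\widetilde\psi$ respects the coalgebra structure. To this end I would set
\[
E=\{x\in B_L \mid \Delta_A\widetilde\psi(x)=(\widetilde\psi\otimes\widetilde\psi)\Delta(x)\ \text{and}\ \varepsilon_A\widetilde\psi(x)=\varepsilon(x)\}
\]
and prove $E=B_L$. Since $\Delta_A\widetilde\psi$ and $(\widetilde\psi\otimes\widetilde\psi)\Delta$ are both algebra morphisms $B_L\to A\otimes A$, and $\varepsilon_A\widetilde\psi$ and $\varepsilon$ are both algebra morphisms $B_L\to\C$, the subset $E$ on which each pair agrees is a subalgebra of $B_L$. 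Moreover $\C[L]\subset E$ because $\psi$ is assumed to be a bialgebra, hence coalgebra, morphism.

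The crux is to show that $E$ is closed under $\partial$, so that $E$ is a differential subalgebra. For $x\in E$, using $\widetilde\psi\partial=\partial_A\widetilde\psi$ together with the differential-bialgebra identity $\Delta_A\partial_A=(\partial_A\otimes1+1\otimes\partial_A)\Delta_A$ in $A$, I would compute $\Delta_A\widetilde\psi(\partial x)=(\partial_A\otimes1+1\otimes\partial_A)\Delta_A\widetilde\psi(x)$; applying the membership $x\in E$ turns this into $(\partial_A\otimes1+1\otimes\partial_A)(\widetilde\psi\otimes\widetilde\psi)\Delta(x)$. On the other side, the same compatibility in $B_L$ together with $\widetilde\psi\partial=\partial_A\widetilde\psi$ on each tensor factor gives $(\widetilde\psi\otimes\widetilde\psi)\Delta(\partial x)=(\partial_A\otimes1+1\otimes\partial_A)(\widetilde\psi\otimes\widetilde\psi)\Delta(x)$, which matches. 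The counit condition for $\partial x$ is trivial since $\varepsilon_A\partial_A=0=\varepsilon\partial$. Hence $\partial E\subset E$, so $E$ is a differential subalgebra containing $\C[L]$. By Remark \ref{BL-property}, $B_L$ is generated as a differential algebra by $\C[L]$ (this is exactly where the group hypothesis on $L$ enters, via invertibility of the $e^\alpha$), so $E=B_L$ and $\widetilde\psi$ is a coalgebra morphism.

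The main obstacle I anticipate is organizational rather than computational: one must keep straight the three derivations ($\partial$ on $B_L$, $\partial_A$ on $A$, and $\partial_A\otimes1+1\otimes\partial_A$ on $A\otimes A$) and use the differential-bialgebra compatibility on \emph{both} $B_L$ and $A$ simultaneously. The verification that $E$ is $\partial$-stable is the only place where all of this structure is brought to bear at once, and it is the heart of the argument; everything else reduces to the cited differential-algebra universal property and the observation that agreement loci of algebra morphisms are subalgebras.
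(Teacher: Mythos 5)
Your proposal is correct and follows essentially the same route as the paper: both first invoke the universal property of $B_L$ as a differential algebra from \cite{L} to obtain $\widetilde\psi$ and its uniqueness, and both then show $\widetilde\psi$ is a coalgebra morphism by checking that the agreement locus of the relevant (differential) algebra morphisms is a differential subalgebra containing $\mathbb{C}[L]$, which generates $B_L$ as a differential algebra. Your write-up merely spells out the $\partial$-stability computation that the paper leaves as "straightforward."
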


\begin{proof} From  \cite{L}, there exists a unique differential algebra morphism $\widetilde{\psi}: B_L\rightarrow A$ with
$\widetilde{\psi}|_{\mathbb{C}[L]}=\psi$.
It remains to show the  $\widetilde{\psi}$ is also a coalgebra morphism.
Set $$K=\{x\in B_L\mid \Delta_{A}\widetilde{\psi}(x)=(\widetilde{\psi}\otimes \widetilde{\psi})\Delta_{B_{L}}(x)\}.$$	
As $\widetilde{\psi}$, $\Delta_A$ and $\Delta_{B_L}$ are differential algebra morphisms,
 it is straightforward to show that $K$ is a differential subalgebra of $B_L$.
Since $\psi$ is a coalgebra morphism, we have $\C[L]\subset K$. 
It follows that $K=B_L$. This proves $\Delta_{A} \widetilde{\psi}=(\widetilde{\psi}\otimes \widetilde{\psi})\Delta_{B_{L}}$. 
Similarly, we have $\varepsilon_{A} \widetilde{\psi}=\varepsilon_{B_{L}}$. Therefore,  
$\widetilde{\psi}$  is a coalgebra morphism and hence a differential bialgebra morphism. 
\end{proof}

Let $B$ be a commutative differential bialgebra with derivation $\partial$. For $v\in P(B)$, we have 
\begin{align*}
\Delta(\partial v)=(\partial\otimes 1+1\otimes \partial)\Delta(v)
=(\partial\otimes 1+1\otimes \partial)(v\otimes 1+1\otimes v)=\partial v\otimes 1+1\otimes \partial v,
\end{align*}
which implies $\partial v\in P(B)$. Thus $\partial P(B)\subset P(B)$, so that
$P(B)$ is a $\C[\partial]$-module. Here, we consider $\C[\partial]$ as the polynomial algebra with
$\partial$ as an indeterminate. As usual, a $\C[\partial]$-module $W$ is said to be {\em torsion-free} if
$f(\partial)w\ne 0$ for any nonzero $f(\partial)\in \C[\partial],\ w\in W$.
On the other hand, assume that $G(B)$ is a group.
From Theorem \ref{thm-second}, we have a linear map 
$\phi: \C\otimes_{\Z}G(B)\rightarrow B$ with $\phi(g)=g^{-1}\partial (g)$ for $g\in G(B)$.

 \begin{prop}\label{classification-BL}
Let $B$ be a commutative differential  bialgebra such that $G(B)$ is a group which 
generates $B$ as a differential algebra. Suppose that the linear map
$\phi: \C\otimes_{\Z}G(B)\rightarrow B$ with $\phi(g)=g^{-1}\partial (g)$ for $g\in G(B)$ is injective 
and $P(B)$ is a torsion-free $\C[\partial]$-module.
Then  $B\simeq B_{G(B)}$ as a differential bialgebra.
 \end{prop}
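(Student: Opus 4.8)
The plan is to build the isomorphism from the universal property of $B_{G(B)}$ and then to prove bijectivity by reducing to the connected component of the identity. Write $L=G(B)$. First I would observe that $B$ is automatically cocommutative: the set $E=\{b\in B\mid \Delta(b)=T\Delta(b)\}$, with $T$ the flip on $B\otimes B$, is a subalgebra because $T$ is an algebra morphism of the commutative algebra $B\otimes B$, it is $\partial$-stable because $T$ commutes with $\partial\otimes 1+1\otimes\partial$, and it contains the group-likes $G(B)$; since $G(B)$ generates $B$ as a differential algebra, $E=B$. Thus $B$ is pointed and cocommutative, so by Theorem \ref{Abe-thm2.4.7}(3) we have the component decomposition $B=\bigoplus_{g\in L}B_g$, with all primitives concentrated in $B_{\bf 1}$. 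Applying Proposition \ref{BL bialgebra universal property} to the bialgebra inclusion $\C[L]\hookrightarrow B$ yields a differential bialgebra morphism $\widetilde\psi\colon B_{L}\to B$ with $\widetilde\psi(e^{g})=g$; its image is a differential subalgebra containing $G(B)$ and hence equals $B$, so $\widetilde\psi$ is surjective.

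It remains to prove $\widetilde\psi$ injective. Since $\widetilde\psi$ is a coalgebra morphism with $\widetilde\psi(e^{g})=g$, it carries $(B_{L})_{e^{g}}$ into $B_{g}$; multiplication by the invertible group-likes $e^{g}$ and $g$ gives bijections $(B_{L})_{e^{0}}\to (B_{L})_{e^{g}}$ and $B_{\bf 1}\to B_{g}$ that are intertwined by the algebra morphism $\widetilde\psi$, so injectivity of $\widetilde\psi$ follows from injectivity of its restriction to the identity component $(B_{L})_{e^{0}}=S(\widehat{\mathfrak h}^{-})$. Now $S(\widehat{\mathfrak h}^{-})=U(\widehat{\mathfrak h}^{-})$ for the abelian Lie algebra $\widehat{\mathfrak h}^{-}$ is connected cocommutative with primitives $\widehat{\mathfrak h}^{-}$, and it maps into the connected cocommutative coalgebra $B_{\bf 1}$, whose primitives are $P(B)$. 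Hence Corollary \ref{B(g)=U(g)} reduces injectivity to that of the restriction $\Theta:=\widetilde\psi|_{\widehat{\mathfrak h}^{-}}\colon \widehat{\mathfrak h}^{-}\to P(B)$. Differentiating $\widetilde\psi(e^{g})=g$ and cancelling the invertible $g$ gives $\Theta(\bar\alpha(-1))=\phi(\alpha)$, and applying $\partial$ repeatedly gives $\Theta(\bar\alpha(-n))=\tfrac{1}{(n-1)!}\partial^{\,n-1}\phi(\alpha)$; since $\widehat{\mathfrak h}^{-}=\C[\partial]\otimes_{\C}(\mathfrak h\otimes t^{-1})$ is $\C[\partial]$-free, $\Theta$ is exactly the $\C[\partial]$-linear map out of this free module determined by $\phi$.

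The injectivity of $\Theta$ is the heart of the argument and the step I expect to be the main obstacle. That $\Theta$ is surjective is easy: using that $G(B)$ generates $B$, one checks $P(B)=\C[\partial]\phi(\mathfrak h)$ (equivalently, $B_{\bf 1}=S(P(B))$ is generated as an algebra by the subspace $\C[\partial]\phi(\mathfrak h)$, which forces it to be all of $P(B)$). For injectivity I would combine the two remaining hypotheses: the $\C$-injectivity of $\phi$ and the torsion-freeness of $P(B)$ over the principal ideal domain $\C[\partial]$ must be used to exclude every $\C[\partial]$-relation $\sum_i f_i(\partial)\phi(\alpha_i)=0$, equivalently to show that $\phi$ remains injective modulo $\partial P(B)$ so that $\phi(\mathfrak h)$ is a free $\C[\partial]$-generating set. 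Once $\Theta$ is shown to be an isomorphism, $\widetilde\psi$ is injective, hence a differential bialgebra isomorphism, and $B\simeq B_{G(B)}$. The delicate point is precisely that torsion-freeness by itself controls only single-generator relations, so the real work is to leverage it together with the compatibility of $\phi$ with the filtration of $P(B)$ by $\partial$-order.
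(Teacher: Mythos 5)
Your reduction steps match the paper's proof almost exactly: the surjection $\widetilde\psi\colon B_L\to B$ from Proposition \ref{BL bialgebra universal property}, surjectivity from the generation hypothesis, the reduction of injectivity to the identity component (the paper phrases this by invoking Corollary \ref{B-factorization} to write $B=S(P(B))\otimes\C[G(B)]$ rather than by multiplying with invertible group-likes, but the content is the same), and the appeal to Corollary \ref{B(g)=U(g)} to pass from the primitives to all of $S(\widehat{\mathfrak{h}}^{-})$. The genuine gap is the one you flag yourself: you never actually prove that $\Theta=\widetilde\psi|_{\widehat{\mathfrak{h}}^{-}}\colon\widehat{\mathfrak{h}}^{-}\to P(B)$ is injective. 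As written, your argument reduces the proposition to the assertion that injectivity of $\phi$ together with torsion-freeness of $P(B)$ rules out every relation $\sum_i f_i(\partial)\phi(\alpha_i)=0$, and then stops; so the proposal is not a complete proof.

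It is worth saying, though, that your diagnosis of why this step is delicate is accurate, and that it lands precisely on the tersest point of the paper's own proof, which disposes of it in one sentence: from injectivity of $\psi$ on $\mathfrak{h}(-1)$ and torsion-freeness of $P(B)$ it concludes injectivity on all of $\widehat{\mathfrak{h}}^{-}$. That inference is not automatic for a $\C[\partial]$-linear map out of a free module: if $\mathfrak{h}$ has basis $e_1,e_2$ and the map sends $e_1(-1)\mapsto v$, $e_2(-1)\mapsto\partial v$ with $v\neq 0$ in a torsion-free module, then it is injective on $\C e_1(-1)+\C e_2(-1)$ (torsion-freeness forces $v,\partial v$ to be $\C$-independent) yet it annihilates $e_2(-1)-\partial e_1(-1)$; and one can realize this situation by taking $B$ to be the quotient of $B_{\Z^2}$ by the differential ideal generated by the primitive $\bar{g}_2(-1)-\partial\bar{g}_1(-1)$. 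What is actually needed is that $\phi$ remain injective after composing with the projection $P(B)\to P(B)/\partial P(B)$ (equivalently, that $\phi(\mathfrak{h})$ be a free system of $\C[\partial]$-generators), exactly as you suspect. Neither your write-up nor the paper derives this from the stated hypotheses; to finish, you would have to either supply that derivation or work with the strengthened hypothesis.
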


 \begin{proof} Let $L$ be the additive copy of $G(B)$.   
 By Proposition \ref{BL bialgebra universal property}, there exists a differential bialgebra
 morphism $\psi: B_{L}=S(\widehat{\mathfrak{h}}^-)\otimes \mathbb C[L]\rightarrow B$ 
 such that $\psi(e^\alpha)=\alpha$ for $\alpha\in L$. 
 Since $B$  as a differential algebra is generated by $G(B)$,
 $\psi$ is surjective. Then it remains to prove that $\psi$ is injective.
 
 Recall that $B_{L}$ is a commutative and cocommutative (pointed) bialgebra with
$G(B_L)=L$ and $P(B_L)=\widehat{\mathfrak{h}}^{-}$. 
On the other hand, noticing that $B$ as a homomorphism image of $B_L$  is cocommutative,  
by Corollary \ref{B-factorization}, we have $B=S(P(B)) \otimes \mathbb{C}[L]$ as bialgebras. 
As a bialgebra morphism, $\psi$ is also a $\C[\partial]$-module morphism, 
sending $P(B_L)=\widehat{\mathfrak{h}}^{-}$ to $P(B)$.

Notice that  $\widehat{\mathfrak{h}}^{-}$ is a free $\C[\partial]$-module on ${\mathfrak{h}}={\mathfrak{h}}(-1)$.
For $\alpha\in L=G(B)$, we have
 $$\psi(\bar{\alpha}(-1))=\psi(e^{-\alpha}\partial e^{\alpha})=\alpha^{-1}\partial \alpha\in P(B),$$
 recalling that $\bar{\alpha}=1\otimes \alpha\in {\mathfrak{h}}$.
 From our assumption on $\phi$, $\psi$ is an injective map from ${\mathfrak{h}}(-1)$ to $P(B)$.
Furthermore, as $P(B)$ is assumed to be a torsion-free $\C[\partial]$-module, 
 $\psi$ is injective from $\widehat{\mathfrak{h}}^-=P(B_L)$ to $P(B)=P(S(P(B)))$.
Then by Corollary \ref{B(g)=U(g)}  $\psi: S(\widehat{\mathfrak{h}}^-) \rightarrow S(P(B))$ is injective. 
Consequently, $\psi$ is an injective map from $B_L$ to $B$.
Therefore, $\psi$ is an isomorphism  of differential bialgebras.
\end{proof}

Next, we generalize this construction to abelian semigroups.
 Assume now that $L$ is an additive (abelian) semigroup with identity $0$.  
 Let $\C L$ denote the vector space over $\C$ with a basis $\{ (1,\alpha)\ |\ \alpha\in L\}$.
 Define $\C\otimes_{\N}L$ to be the quotient space of $\C L$
 modulo the subspace spanned by vectors
 $$(1,\alpha)+(1,\beta)-(1,\alpha+\beta)$$
 for $\alpha,\beta\in L$.
 Set  
 \begin{align}
 \mathfrak{h}=\C\otimes_{\N}L
 \end{align}
and write $\bar{\alpha}$ for the image of $(1,\alpha)$ in $ \mathfrak{h}$.
 Notice that in case $L$ is a group, we have $\C\otimes_{\N}L=\C\otimes_{\Z}L$.
 
 Just as before,  we get a commutative and cocommutative differential bialgebra $B_L$.
  Identify $a\in \mathfrak{h}$ with $a(-1)\otimes 1\in B_L$, 
 to view $\mathfrak{h}$ as a subspace of $B_{L}$. It is clear that $B_L$  as a differential algebra 
 is  generated by $\C[L]+\mathfrak{h}$.  (In general,  $B_L$  as a differential algebra may not be generated by $\C[L]$ alone.)

Define a map 
\begin{align}
\phi:\  L\rightarrow \widehat{\mathfrak{h}}^{-}=P(S(\widehat{\mathfrak{h}}^{-}));\quad  
\alpha\mapsto \phi(\alpha)=\bar{\alpha}(-1)\quad \text{ for }\alpha\in L.
\end{align}
We have the following (universal) property for $B_{L}$: 

\begin{prop}\label{semigroup-BL}
Let $B$ be any commutative differential bialgebra, and let $\psi:\  \C[L]\rightarrow B$ be a bialgebra morphism, 
$\phi_B:\  \mathfrak{h}\rightarrow P(B)$ a linear map such that
\begin{align}
\partial \psi(e^{\alpha})=\phi_B(\bar{\alpha})\psi(e^{\alpha})\quad \text{ for }\alpha\in L.
\end{align}
Then there exists a unique differential bialgebra morphism $f: B_L\rightarrow B$ such that
$f|_{\C[L]}=\psi$ and $f|_{\mathfrak{h}}=\phi_B$.
\end{prop}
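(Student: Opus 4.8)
The plan is to follow the template of Proposition \ref{BL bialgebra universal property}, with the one essential difference that in the semigroup case $B_L$ is generated as a differential algebra not by $\C[L]$ alone but by $\C[L]+\mathfrak{h}$ (as noted before the statement), so that both $\psi$ and $\phi_B$ are genuinely needed to pin down $f$. Accordingly, I would first produce $f$ as a differential algebra morphism, and then upgrade it to a coalgebra morphism exactly as in the group case. Uniqueness is then automatic, since any two differential bialgebra morphisms agreeing on $\C[L]$ and $\mathfrak{h}$ agree on the differential subalgebra they generate, which is all of $B_L$.

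For the existence of $f$ as a differential algebra morphism, recall from the construction that $S(\widehat{\mathfrak{h}}^{-})$ is the free commutative differential algebra on the space $\mathfrak{h}=\mathfrak{h}(-1)$, with $\bar{\alpha}(-n)=\frac{1}{(n-1)!}\partial^{n-1}\bar{\alpha}(-1)$ for $n\ge 1$. Hence the linear map $\bar{\alpha}\mapsto\phi_B(\bar{\alpha})$ extends uniquely to a differential algebra morphism $S(\widehat{\mathfrak{h}}^{-})\to B$ sending $\bar{\alpha}(-n)$ to $\frac{1}{(n-1)!}\partial^{n-1}\phi_B(\bar{\alpha})$; multiplying this against $\psi$ (the two commute, as $B$ is commutative) defines an algebra morphism $f:B_L=S(\widehat{\mathfrak{h}}^{-})\otimes\C[L]\to B$ with $f|_{\mathfrak{h}}=\phi_B$ and $f|_{\C[L]}=\psi$. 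To see that $f$ commutes with $\partial$, I would show that $K=\{x\in B_L\mid f(\partial x)=\partial f(x)\}$ is a subalgebra (immediate from $f$ being an algebra morphism and $\partial$ being a derivation on both sides) and check it contains the algebra generators. The generators $\bar{\alpha}(-n)$ lie in $K$ by construction, while $e^{\alpha}\in K$ is exactly the content of the hypothesis: since $\partial e^{\alpha}=\bar{\alpha}(-1)e^{\alpha}$ in $B_L$, we get $f(\partial e^{\alpha})=\phi_B(\bar{\alpha})\psi(e^{\alpha})=\partial\psi(e^{\alpha})=\partial f(e^{\alpha})$. Thus $K=B_L$ and $f\partial=\partial f$.

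For the coalgebra part I would argue as in Proposition \ref{BL bialgebra universal property}: the sets $K'=\{x\mid \Delta_B f(x)=(f\otimes f)\Delta_{B_L}(x)\}$ and $K''=\{x\mid \varepsilon_B f(x)=\varepsilon_{B_L}(x)\}$ are differential subalgebras of $B_L$, because $f$, the comultiplications, and the counits are all differential algebra morphisms. It therefore suffices to check that $K'$ and $K''$ contain the differential-algebra generators $\C[L]$ and $\mathfrak{h}$. For $e^{\alpha}$, which is group-like in $B_L$ since $G(B_L)=L$, the element $f(e^{\alpha})=\psi(e^{\alpha})$ is group-like in $B$ because $\psi$ is a bialgebra morphism, giving both conditions at once. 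For $\bar{\alpha}(-1)\in\mathfrak{h}$, which is primitive in $B_L$ since $P(B_L)=\widehat{\mathfrak{h}}^{-}$ (Remark \ref{BL-property}), the assumption $\phi_B(\mathfrak{h})\subset P(B)$ guarantees that $f(\bar{\alpha}(-1))=\phi_B(\bar{\alpha})$ is primitive in $B$, so $\Delta_B f$ and $(f\otimes f)\Delta_{B_L}$ agree on it and $\varepsilon_B f(\bar{\alpha}(-1))=0=\varepsilon_{B_L}(\bar{\alpha}(-1))$. Hence $K'=K''=B_L$, and $f$ is a coalgebra morphism, hence a differential bialgebra morphism.

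I expect the only point requiring genuine care to be the existence of $f$ as a differential algebra morphism: namely, the clean use of the freeness of $S(\widehat{\mathfrak{h}}^{-})$ as a commutative differential algebra together with the compatibility hypothesis to place $e^{\alpha}$ in $K$. This is precisely where the semigroup case departs from the group case, in which $\C[L]$ alone generates and $\phi_B$ is recoverable as $e^{-\alpha}\partial e^{\alpha}$. Once $f$ is known to commute with $\partial$, the coalgebra verification is entirely parallel to the argument already carried out in Proposition \ref{BL bialgebra universal property}.
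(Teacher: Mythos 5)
Your proof is correct and follows essentially the same route as the paper's: both extend $\phi_B$ through the freeness of $S(\widehat{\mathfrak{h}}^{-})$ as a differential algebra on $\mathfrak{h}$ (equivalently, of $\widehat{\mathfrak{h}}^{-}$ as a $\C[\partial]$-module) to an algebra morphism $\tilde{\phi}_B$, and both take $f(X\otimes e^{\alpha})=\tilde{\phi}_B(X)\psi(e^{\alpha})$. The only divergence is in the verification: the paper identifies $B_L$ with $S(\widehat{\mathfrak{h}}^{-})\otimes_{\phi}\C[L]$ via Corollary \ref{A-CL} and then cites the universal property of Proposition \ref{universal-V-phi-L}, whereas you check directly, by the generators-and-subalgebras argument, that $f$ commutes with $\partial$ and is a coalgebra morphism --- a more self-contained variant that in effect also supplies the details behind the paper's ``straightforward'' claim that $\tilde{\phi}_B$ is a coalgebra morphism.
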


\begin{proof} We only need to prove the existence. 
Using Corollary \ref{A-CL}, we can easily see that $B_L=S(\widehat{\mathfrak{h}}^{-})\otimes_{\phi}\C[L]$ as differential bialgebras. 
Since $\widehat{\mathfrak{h}}^{-}$ is a free $\C[\partial]$-module over $\mathfrak{h}$,
 the linear map $\phi_B: \mathfrak{h}\rightarrow P(B)$ extends uniquely to a $\C[\partial]$-module morphism 
from $\widehat{\mathfrak{h}}^{-}$ to $P(B)$ and then to an algebra morphism
$\tilde{\phi}_B: \ S(\widehat{\mathfrak{h}}^{-})\rightarrow B$. 
It is straightforward to show that $\tilde{\phi}_B$ is also a coalgebra morphism.
Then by Proposition \ref{universal-V-phi-L}
we have a differential bialgebra morphism $f:\ B_L\rightarrow B$ such that 
$$f(X\otimes e^{\alpha})=\tilde{\phi}_B(X)\psi(e^{\alpha})\quad \text{ for }X\in S(\widehat{\mathfrak{h}}^{-}),\ \alpha\in L,$$
which implies $f|_{\C[L]}=\psi$ and $f|_{\mathfrak{h}}=\phi_B$. 
\end{proof}

Using essentially the same arguments as in the proof of Proposition \ref{classification-BL},
  we obtain the following characterization of the differential bialgebra $B_{L}$:

\begin{prop}\label{identification-BL}
Let $B$ be a commutative differential bialgebra such that $P(B)$ is a torsion-free $\C[\partial]$-module and such that
$P(B)+\C[G(B)]$ generates $B$ as a differential algebra.
In addition, assume that there is an injective linear map $\phi: \C\otimes_{\N}G(B)_{+}\rightarrow P(B)$ such that
\begin{align}
\partial g=\phi(\bar{g})g\quad \text{ for }g\in G(B),
\end{align}
where $G(B)_{+}$ denotes an additive version of $G(B)$ and
$\bar{g}=1\otimes g\in \C\otimes_{\N}G(B)_{+}$.
Then the differential algebra morphism $f: B_{G(B)_{+}}\rightarrow B$ uniquely determined by
$f|_{G(B)}=1$ and $f|_{\mathfrak{h}}=1$ is a differential bialgebra isomorphism.
\end{prop}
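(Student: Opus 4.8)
The plan is to follow the proof of Proposition~\ref{classification-BL}, replacing its group-theoretic inputs by semigroup analogues: Proposition~\ref{semigroup-BL} in place of Proposition~\ref{BL bialgebra universal property}, and a direct analysis of connected components in place of the global factorization of Corollary~\ref{B-factorization} (available only when $G(B)$ is a group). First I would construct the morphism. Setting $L=G(B)_{+}$, so that $B_{G(B)_{+}}=B_{L}=S(\widehat{\mathfrak h}^{-})\otimes\C[L]$ with $\mathfrak h=\C\otimes_{\N}L$, let $\psi:\C[L]\rightarrow B$ be the bialgebra morphism determined by $e^{\alpha}\mapsto g$, where $g\in G(B)$ corresponds to $\alpha\in L$; this is a bialgebra morphism because $\C[G(B)]$ is a subbialgebra of $B$ and $L\cong G(B)$ as semigroups. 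Taking $\phi_{B}=\phi$, the hypothesis $\partial g=\phi(\bar g)g$ is exactly the compatibility $\partial\psi(e^{\alpha})=\phi_{B}(\bar\alpha)\psi(e^{\alpha})$ required in Proposition~\ref{semigroup-BL}, which then yields a unique differential bialgebra morphism $f:B_{L}\rightarrow B$ with $f|_{\C[L]}=\psi$ and $f|_{\mathfrak h}=\phi$, i.e. $f|_{G(B)}=1$ and $f|_{\mathfrak h}=1$ under the stated identifications. This settles existence, and it remains to prove that $f$ is bijective.

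Next I would reduce bijectivity to the level of primitives. Both $B_{L}$ and $B$ are commutative and cocommutative, hence pointed, so they decompose into connected components $B_{L}=\bigoplus_{\alpha\in L}(B_{L})_{\alpha}$ and $B=\bigoplus_{g\in G(B)}B_{g}$, and $f$ carries $(B_{L})_{\alpha}$ into $B_{g}$ along the bijection $L\cong G(B)$. Since multiplication by the group-like element $e^{\alpha}$ identifies $(B_{L})_{\alpha}$ with the identity component $(B_{L})_{0}=S(\widehat{\mathfrak h}^{-})$ as connected coalgebras and is intertwined by $f$ with multiplication by $g$, it suffices to treat $f$ on the identity component. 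Here $(B_{L})_{0}=S(\widehat{\mathfrak h}^{-})=U(\widehat{\mathfrak h}^{-})$ and $B_{1}\cong S(P(B))$ is cocommutative connected with $P(B_{1})=P(B)$, so Corollary~\ref{B(g)=U(g)} reduces injectivity (resp.\ surjectivity, bijectivity) of $f$ on this component to that of the induced map on primitives $f|_{\widehat{\mathfrak h}^{-}}:\widehat{\mathfrak h}^{-}=P(B_{L})\rightarrow P(B)$.

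Finally I would analyze this primitive-level map. Since $\widehat{\mathfrak h}^{-}$ is free over $\C[\partial]$ on $\mathfrak h(-1)$ and $f|_{\mathfrak h(-1)}=\phi$ is injective, the torsion-freeness of $P(B)$ yields injectivity of $f|_{\widehat{\mathfrak h}^{-}}$, exactly as in Proposition~\ref{classification-BL}; surjectivity onto $P(B)$ I would extract from the hypothesis that $P(B)+\C[G(B)]$ generates $B$ as a differential algebra, noting that the image of $f$ is a differential subbialgebra containing $G(B)$ whose space of primitives is $\C[\partial]\phi(\mathfrak h)$, which the generation hypothesis forces to equal $P(B)$. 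Corollary~\ref{B(g)=U(g)} then makes $f$ bijective on the identity component, and hence, by the multiplication-by-group-like identification, on every component, so that $f$ is a differential bialgebra isomorphism.

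The step I expect to be the main obstacle is precisely this primitive-level bijectivity $\widehat{\mathfrak h}^{-}\rightarrow P(B)$. On the injective side, the delicate point is passing from injectivity of $\phi$ on the free generators $\mathfrak h(-1)$ to injectivity on all of $\widehat{\mathfrak h}^{-}$: one must guarantee that the images of a basis of $\mathfrak h$ stay $\C[\partial]$-independent in $P(B)$, which is exactly where the torsion-freeness of $P(B)$ has to be used with care. On the surjective side, the feature peculiar to the semigroup setting is the absence of the factorization $B\cong S(P(B))\otimes\C[G(B)]$ of Corollary~\ref{B-factorization}; one cannot simply read off the structure of $B$, so the argument must proceed component by component and must genuinely use the generation hypothesis to conclude that $\phi(\mathfrak h)$ already $\C[\partial]$-generates all of $P(B)$.
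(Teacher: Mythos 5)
Your construction of $f$ via Proposition \ref{semigroup-BL} is fine, and since the paper's entire proof of this proposition is the remark that one argues ``as in Proposition \ref{classification-BL}'', you are in effect supplying the omitted details. But two of your steps have genuine gaps, and they sit exactly where the group hypothesis did real work in Proposition \ref{classification-BL}. The first is the reduction to the identity component. From $f(X\otimes e^{\alpha})=f(X)g$ and the bijectivity of $X\otimes e^{0}\mapsto X\otimes e^{\alpha}$ on the \emph{source}, bijectivity of $f$ on $(B_{L})_{0}$ only gives $f\bigl((B_{L})_{\alpha}\bigr)=B_{\bf 1}\,g$; to conclude that $f$ is bijective onto $B_{g}$ you need multiplication by $g$ to be an injection of $B_{\bf 1}$ onto $B_{g}$. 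In the group case this is exactly what Lemma \ref{invertible-case} and Corollary \ref{B-factorization} provide, and for a non-invertible $g$ there is no a priori reason why $v\mapsto vg$ should be injective on $B_{\bf 1}$ or why $B_{\bf 1}g=B_{g}$. Your fallback of applying Corollary \ref{B(g)=U(g)} component by component hits the same wall: it reduces to showing that $h\mapsto f(h)g$ is bijective from $\widehat{\mathfrak h}^{-}$ onto $P_{g,g}(B)$, which again requires controlling multiplication by $g$. You flag that Corollary \ref{B-factorization} is unavailable, but you then assume its key consequence anyway.

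The second gap is the surjectivity claim that ``the generation hypothesis forces $\C[\partial]\phi(\mathfrak h)=P(B)$.'' It does not. Take $B=S(W)$ for a nonzero free $\C[\partial]$-module $W$: then $G(B)=\{1\}$, so $\mathfrak h=\C\otimes_{\N}\{0\}=0$ and $\phi=0$ is (vacuously) injective, $P(B)=W$ is torsion-free, and $P(B)+\C[G(B)]$ generates $B$ as a differential algebra; yet $\C[\partial]\phi(\mathfrak h)=0\subsetneq P(B)$ and $f:\C\rightarrow B$ is not surjective. So your surjectivity argument is a non sequitur, and this example shows that the hypotheses as literally stated do not suffice --- one needs something like $\phi(\mathfrak h)$ generating $P(B)$ over $\C[\partial]$ (which is implicit when $\C[G(B)]$ alone generates $B$, as in Proposition \ref{classification-BL}). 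Finally, on the injective side you correctly isolate the needed fact --- that the images of a basis of $\mathfrak h$ remain $\C[\partial]$-independent in $P(B)$ --- but torsion-freeness of $P(B)$ together with injectivity of $\phi$ on $\mathfrak h$ does not yield this when $\dim\mathfrak h\geq 2$ (e.g.\ $\phi(h_{1})=v$, $\phi(h_{2})=\partial v$ inside $\C[\partial]v$). That particular leap is already present in the paper's proof of Proposition \ref{classification-BL}, so it is inherited rather than introduced, but it remains unproved in your write-up.
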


\section*{Acknowledgment}
J. Han  is supported by the CSC (grant No. 202006265002).  Y. Xiao is  supported by NSF of China (grant No.  11971350) and  the CSC (grant No. 202006260122).


\end{document}